\newtheorem{theorem}{Theorem}
\newtheorem{corollary}[theorem]{Corollary}
\newtheorem{definition}[theorem]{Definition}
\newtheorem{example}[theorem]{Example}
\newtheorem{lemma}[theorem]{Lemma}
\newtheorem{notation}[theorem]{Notation}
\newtheorem{proposition}[theorem]{Proposition}
\newtheorem{remark}[theorem]{Remark}
\newcommand{\mub}{\mu_{\beta}}
\newcommand{\mubn}{\mu_{\beta,n}}
\newcommand{\phib}{\Phi_{\beta}}
\newcommand{\tr}{\text{{\normalfont Tr}}\,}
\newcommand{\4}{\vspace{.36cm}}
\newcommand{\2}{\vspace{.18cm}}
\newcommand{\hspc}{\hspace{.5cm}}
\newcommand{\N}{\mathbb{N}}
\newcommand{\R}{\mathbb{R}}
\title{Quantum Spin probabilities at positive temperature are H\"older  Gibbs probabilities}
\author{Jader E. Brasil, Artur O. Lopes,\\
Jairo K. Mengue and Carlos G. Moreira (*)  \\
\\ UFRGS, Brazil and \\
(*) IMPA - Brazil}
\begin{document}

\maketitle

\begin{abstract}

We consider the KMS state associated to  the Hamiltonian $H= \sigma^x \otimes \sigma^x$ over the quantum spin lattice
$\mathbb{C}^2 \otimes \mathbb{C}^2 \otimes \mathbb{C}^2  \otimes  ...$. For a fixed observable of the form $L \otimes L \otimes L  \otimes  ...$, where  $L:\mathbb{C}^2 \to \mathbb{C}^2 $ is self adjoint, and for  positive temperature $T$ one can get a naturally defined stationary probability $\mu_T$ on
the Bernoulli space $\{1,2\}^\mathbb{N}$. The Jacobian of $\mu_T$ can be expressed via a certain  continued fraction expansion. We will show that this probability is a Gibbs probability for a H\"older potential. Therefore, this probability is mixing for the shift map. For such probability $\mu_T$ we will show the explicit deviation function for a certain class of  functions.
When decreasing temperature we will be able to exhibit the explicit transition value $T_c$ where the set of values of the Jacobian of the  Gibbs probability $\mu_T$ changes from being a Cantor set to being an interval.

 We also present some  properties for quantum spin probabilities at zero temperature (for instance, the explicit value of the entropy).

\end{abstract}

\section{Introduction}

In \cite{LMMM} the case of zero temperature for the quantum spin probability and the Hamiltonian $H= \sigma^x \otimes \sigma^x$, where $\sigma_x$ is the $x$-Pauli matrix, was analyzed. Here we will analyze the analogous problem in the case of positive temperature.

Given a selfadjoint operator $H$ acting on a finite dimensional complex Hilbert space $\mathcal{H}$ and a temperature $T>0$,  the density operator
$$\rho_{H,T}= \frac{e^{-\frac{1}{T}\, H }}{Z(T)}, $$
where $Z(T)=$ Trace $e^{-\frac{1}{T}\, H }$, is called the KMS operator associated to the Hamiltonian $H$. It is usual to denote $\beta=1/T$. A general reference on KMS operators and KMS states is \cite{Bra}.

%We will also fix later a certain  self adjoint operator $L : \mathbb{C}^2 \to  \mathbb{C}^2.$

The set of linear operators acting on $\mathbb{C}^2$ will be denoted by $\mathcal{M}_2.$
We will call
$\omega=\omega_n:  \underbrace{\mathcal{M}_2 \otimes \mathcal{M}_2 \otimes...
\otimes \mathcal{M}_2}_n\to \mathbb{C}$ a $C^*$-dynamical state if $\omega_n ( I^{ \otimes \, n})=1$ and $\omega_n(a)\geq 0$, if $a$ is a non-negative element in the tensor product. General references on tensor products and spin lattices are \cite{Evans}, \cite{Ara1} and \cite{PF}.

\medskip

Here we will consider the Hamiltonian $H=\sigma^x \otimes \sigma^x: \mathbb{C}^2 \otimes  \mathbb{C}^2\,\to \mathbb{C}^2 \otimes  \mathbb{C}^2$ acting on the spin lattice $\mathbb{C}^2 \otimes \mathbb{C}^2 \otimes \mathbb{C}^2  \otimes  ...$, where
$$\sigma_x=\left(
\begin{array}{cc}
0 & 1\\
1 & 0
\end{array}\right)$$
is the $x$-Pauli matrix. More precisely we will consider for each $n\geq 2 $
the state  $\omega=\omega_n$, which is  defined in the following way: consider a fixed value $\beta>0$,
and $ H_n= (\mathbb{C}^2)^{\otimes n} \to (\mathbb{C}^2)^{\otimes n}$ given by
$$H_n=\sum_{j=0}^{n-2} I^{ \otimes \,j} \otimes  H \otimes  I^{ \otimes\,(n-j-2) }.  $$
Denote $\rho_{\omega}$ the operator
$$ \rho_{\omega}=\rho_{ \omega_{H,\beta,n}} = \frac{1}{Tr\, ( e^{-\, \beta H_n}  ) }  e^{-\, \beta H_n}$$
and define the $C^*$-dynamical state $\omega_n=\omega_{H,\beta,n}$ by
$$ \omega_n\,(A)\, = \text{ trace }\,\, (\rho_{\omega} \,A),\,\,\,\,A \in   \underbrace{\mathcal{M}_2 \otimes \mathcal{M}_2 \otimes...
	\otimes \mathcal{M}_2}_n. $$

The family $w_n$, $ n \in \mathbb{N}$, defines a  $C^*$-dynamical state over $\mathcal{M}_2^{ \otimes\, \infty}$ in the following sense:
$w_n$ is a  $C^*$-dynamical state over
$\underbrace{\mathcal{M}_2 \otimes \mathcal{M}_2 \otimes... \otimes \mathcal{M}_2}_n$ for each $n$.
The $C^*$-dynamical states play an important role in Quantum Statistical Mechanics (see \cite{Bra} and \cite{GS})

\medskip

{\bf Assumption A:}   We fix a value $\theta \in (0, \pi/2),\, \theta \neq \pi/4,$ and  we consider the self-adjoint operator $L$ on the form

$$L=\, \left(
\begin{array}{cc}
\cos^2(\theta)-\sin^2(\theta) & 2\cos(\theta)\sin(\theta)\\
2\cos(\theta)\sin(\theta) & \sin^2(\theta)-\cos^2(\theta)
\end{array}
\right).$$

The case $\theta =0$ corresponds to $L=\sigma^z$ and the case $\theta = \pi/2$ corresponds to $L=-\sigma^z$. We will not consider these cases.

\medskip

What is important in the above choice of $L$  is the corresponding subspaces of eigenvectors.
The eigenvalues of $L$ are $\lambda_1=1$ and $\lambda_2=-1$ which are associated, respectively, to the unitary eigenvectors $\psi_1= (\cos(\theta), \sin(\theta))\in \mathbb{C}^2 $ and $\psi_{2} =(-\sin(\theta),\cos(\theta))\in\mathbb{C}^2$, which are orthogonal. Furthermore, for any $n\in \mathbb{N}$, the observable
$$ L^{ \otimes \, n}:=(L \otimes L \otimes \,....\otimes L): (\mathbb{C}^d \otimes \mathbb{C}^d \otimes \,....\otimes \mathbb{C}^d)
\to (\mathbb{C}^d \otimes \mathbb{C}^d \otimes \,....\otimes \mathbb{C}^d)$$
has the eigenvector $ (\, \psi_{j_1} \otimes \psi_{j_2} \otimes .... \otimes  \psi_{j_n}\,)$
associated to the eigenvalue  $\lambda_{j_1} \cdot\lambda_{j_2} \cdots  \lambda_{j_n}.$ Any eigenvalue of $ L^{ \otimes \, n}$ is of this form.

\medskip

We denote by  $P_j=P_{\psi_j}: \mathbb{C}^2 \to \mathbb{C}^2$ the orthogonal projection on the subspace
	generated by $\psi_j$, \,$j\in\{1,2\}$. In this way
			
			$$
			P_1= \, \left(
			\begin{array}{cc}
			\cos(\theta)^2 & \cos(\theta)\, \sin(\theta)\\
			\cos(\theta)\, \sin(\theta) & \sin(\theta)^2
			\end{array}
			\right)$$
			
			\noindent and
			
			$$
			P_2=  \left(
			\begin{array}{cc}
			\sin(\theta)^2  & -\cos(\theta)\, \sin(\theta)\\
			- \cos(\theta)\, \sin(\theta) & \cos(\theta)^2
			\end{array}
			\right) .$$

			\4 Note that Tr $P_1=$  Tr $P_2= 1$. Moreover,
			$$\sigma^x (P_1) =\left(
			\begin{array}{cc}
			\cos(\theta)\sin(\theta) & \sin^2(\theta)\\
			\cos^2(\theta) & \cos(\theta)\sin(\theta)
			\end{array}
			\right),
			$$
			
			\noindent which has trace equal to $\beta_1:= \sin(2\, \theta)\in \mathbb{R} $, and
			
			$$\sigma^x (P_2) =
			\, \left(
			\begin{array}{cc}
			-\cos(\theta)\sin(\theta) & \cos^2(\theta) \\
			\sin^2(\theta)& -\cos(\theta)\sin(\theta)
			\end{array}
			\right)$$
			has trace $  \beta_2:=- \sin(2\, \theta)\in \mathbb{R}$. Therefore,
			$\tr(\sigma^x (P_2))=\beta_2= - \beta_1$.

			\medskip

			We want to define  a probability $\mub$ on the Bernoulli space $\{1,2\}^\mathbb{N}.$ First, for each $n$ we introduce the probability  $\mubn$, in such way that, for an element $(j_1,...,j_n) \in \{1,2\}^n$ it is given by
			
			$$\mubn( j_1,,...,j_n )=\frac{1}{Tr\, ( e^{-\, \beta H_n}  ) }\tr \left[\,e^{-\, \beta H_n} ( P_{{j_1}} \otimes  P_{{j_2}}\otimes...\otimes  P_{{j_n}})\right]=$$
			$$\frac{1}{\cos^{n-1}(i\beta)2^n}\tr\left[\prod_{i=1}^{n-1} [\cos(i \beta)  I^{\otimes n} +i \sin(i \beta)(\sigma^x_{i}\otimes \sigma^x_{i+1})_n](   P_{j_1} \otimes...\otimes P_{j_n})\right],$$
			where the above product represents composition of operators.
			
Finally we observe that there exists a unique probability
$\mub$ over $\Omega=\{1,2\}^{\mathbb{N}}$ satisfying
$\mub([j_1,...,j_n]) = \mubn(j_1,...,j_n)$ for any $n \in \{2,3,...\}$ and any cylinder set  $[j_1,...,j_n]\subset \Omega$. It is also invariant for the shift map $\sigma$ (see Theorem \ref{def_mub} below).

\begin{definition}
We call the above probability $\mu_\beta$ the quantum spin probability for inverse temperature $\beta$.
\end{definition}

The above definition is consistent with the one usually considered on the literature (see \cite{Hi}, \cite{Leb} and \cite{Len}).

\medskip

Suppose $J:\{1,2\}^\mathbb{N}\to\mathbb{R}$ is a H\"older positive function such that for any $y\in \{1,2\}^\mathbb{N}$ we have that
$\sum_{\{x\,|\, \sigma(x)=y\}} J(x)=1$.
The associated  Ruelle  operator $\mathcal{L}_{\log J}$  is the one such that $\mathcal{L}_{\log J}(f)=g$, when for any $y$ we have
$$ g(y) = \sum_{\{x\,|\, \sigma(x)=y\}} J(x)\, f(x).$$
$\mathcal{L}_{\log J}^*$ denotes the dual of the Ruelle operator which acts on probabilities over $\{1,2\}^\mathbb{N}$
(via the Riesz Theorem) as described in \cite{PP}.

\begin{definition} The unique probability $m$ such that $\mathcal{L}_{\log J}^*(m)=m$ is called the H\"older Gibbs probability associated to $\log J$. We say that $J$ is the Jacobian of $m$.
\end{definition}

One can show that the Kolomogorov entropy satisfies $h(m) = - \int \log J d m.$ The function $J$ can also be seen as the Radon-Nykodim derivative on the inverse branches of $\sigma$ (see section 9.7 of \cite{Viana}  or \cite{Pa}).
H\"older Gibbs probabilities are equilibrium states for H\"older potentials (see \cite{PP}). The relation of Gibbs probabilities with DLR probabilities is explained in \cite{CL1} and \cite{CL2}.

Our main result is the following (see section \ref{Ho}):

\begin{theorem} \label{oks} For any $\beta>0$ there exist a Holder function $J_\beta$, such that, $J_\beta$ is the Jacobian of the quantum spin probability $\mu_\beta$. The function $J_\beta$ is described by a continuous fraction expansion (see Lemma \ref{tri}). There is an explicit  transition parameter $\beta_c$ (when  $\beta$ is increasing) where  the image values of the Jacobian change from  a regular Cantor set to an interval.

\end{theorem}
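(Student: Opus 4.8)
The plan is to compute the finite-dimensional marginals $\mubn(j_1,\dots,j_n)$ explicitly and extract a formula for the conditional probabilities $\mub([j_1,\dots,j_n])/\mub([j_2,\dots,j_n])$, which (after passing to the limit $n\to\infty$) will be the candidate Jacobian $J_\beta$. Concretely, I would first exploit the tensor structure: since $H_n=\sum_j \sigma^x_j\otimes\sigma^x_{j+1}$ is a sum of commuting-in-a-transfer-matrix-sense nearest-neighbour terms, the operator $e^{-\beta H_n}$ factors into the product $\prod_{i=1}^{n-1}[\cos(i\beta)I+i\sin(i\beta)\,\sigma^x_i\sigma^x_{i+1}]$ already displayed in the excerpt (using $e^{i t\sigma^x\otimes\sigma^x}=\cos t\,I+i\sin t\,\sigma^x\otimes\sigma^x$ since $(\sigma^x\otimes\sigma^x)^2=I$). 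Tracing against the rank-one projectors $P_{j_1}\otimes\cdots\otimes P_{j_n}$ and using $\tr(\sigma^x P_j)=\beta_j=(-1)^{j+1}\sin(2\theta)$ together with $\tr P_j=1$, the trace collapses to a product over the bonds of scalar $2\times 2$ transfer-matrix entries; this is where the continued-fraction expression of Lemma \ref{tri} comes from, since the ratio of two such truncated products telescopes into a finite continued fraction whose depth grows with $n$, and the limit is an infinite (convergent) continued fraction depending Hölder-continuously on the tail of $(j_1,j_2,\dots)$.

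Next I would establish that this limit function $J_\beta$ is well-defined, strictly positive, and Hölder. Positivity and the normalization $\sum_{\sigma(x)=y}J_\beta(x)=1$ follow from the fact that $J_\beta$ is by construction a limit of genuine conditional probabilities of the consistent family $\mubn$ (consistency itself coming from $\tr_{\text{last site}}(P_j)=P_j$-type partial-trace identities, or is already granted by Theorem \ref{def_mub}). For the Hölder estimate, the key observation is that truncating the continued fraction at depth $k$ versus depth $k+1$ changes the value by an amount that is exponentially small in $k$: each additional level of the continued fraction contributes a contraction factor bounded by something like $|\tan(i\beta)|^2<1$ (uniformly in $i$, using $|\tan(it)|=|\tanh t|<1$), so two sequences agreeing on their first $k$ symbols have $J_\beta$-values within $C\lambda^k$. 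Once $J_\beta$ is Hölder and normalized, the general Ruelle–Perron–Frobenius theory (as cited, \cite{PP}, \cite{Viana}) gives a unique $\mathcal{L}_{\log J_\beta}^*$-fixed probability $m$ with Jacobian $J_\beta$; comparing the cylinder measures of $m$ with those of $\mu_\beta$ (both given by the same telescoping products of conditionals) shows $m=\mu_\beta$, so $\mu_\beta$ is the Hölder Gibbs state for $\log J_\beta$, and mixing follows from standard properties of Gibbs measures for Hölder potentials.

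For the transition statement, I would analyze the set of values $\{J_\beta(x):x\in\{1,2\}^\mathbb{N}\}$, which is the attractor (limit set) of the iterated function system obtained from the two inverse branches of the continued-fraction recursion $t\mapsto \phi_{j}(t)$, $j\in\{1,2\}$, where each $\phi_j$ is a Möbius map with contraction ratio controlled by $\beta$ and $\theta$. This image set is a self-similar (regular Cantor) set precisely when the two branch images $\phi_1(\text{whole set})$ and $\phi_2(\text{whole set})$ are disjoint, and it fills an interval when they overlap (or abut); the borderline is an explicit transcendental equation in $\beta$ (with $\theta$ as parameter), say the vanishing of the gap, whose solution defines $\beta_c$. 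I would show the gap length is monotone in $\beta$ so that the Cantor regime is exactly $\beta<\beta_c$ (or $>$, depending on orientation), giving the claimed monotone transition. The main obstacle I anticipate is not the soft part (Hölder + RPF uniqueness is routine once the formula is in hand) but rather (i) correctly deriving the continued-fraction form of $J_\beta$ from the trace computation — keeping track of the $\cos(i\beta)$, $i\sin(i\beta)$, and the cross-terms $\tr(\sigma^x P_{j_i}\,\sigma^x P_{j_{i+1}})$ versus $\tr(P_{j_i})\tr(P_{j_{i+1}})$ in a way that manifestly telescopes — and (ii) pinning down $\beta_c$ exactly, i.e.\ solving the gap-closing equation in closed form and proving the monotonicity of the gap in $\beta$, since a priori the Möbius contraction ratios and the positions of the branch images both move with $\beta$ and one must rule out non-monotone behaviour.
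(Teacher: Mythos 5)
Your overall route coincides with the paper's: derive a two-term recurrence for cylinder measures, express the ratio $\mu_\beta(k_0,\dots,k_n)/\mu_\beta(k_1,\dots,k_n)$ as a continued fraction, view the set of limit values as the attractor of the iterated function system generated by the two M\"obius maps $f_0(x)=\alpha+\gamma/x$ (no symbol change) and $f_1(x)=1-\alpha-\gamma/x$ (symbol change), and locate $\beta_c$ by the disjointness/overlap condition for $f_0(I)$ and $f_1(I)$. The transition part of your sketch is sound: the gap-closing condition reduces algebraically to $\gamma<\alpha(1-2\alpha)$, i.e.\ $\tfrac{1}{4}\cos^2(2\theta)<\tfrac{1}{e^{2\beta}+1}$, whose right-hand side is manifestly monotone in $\beta$, so the monotonicity you worried about comes for free once the inequality is written down.

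There is, however, one genuine gap, and it sits exactly at the step you dismiss as routine: the claim that ``each additional level of the continued fraction contributes a contraction factor bounded by something like $|\tan(i\beta)|^2<1$.'' This uniform one-step Euclidean contraction is false. The invariant interval for the iteration is $I=[1-R,R]$ with $R=\tfrac{1}{2}\bigl(1-\alpha+\sqrt{(1-\alpha)^2-4\gamma}\bigr)$, and $|f_0'(x)|=|f_1'(x)|=\gamma/x^2$, which exceeds $1$ near the left endpoint $1-R$ for a range of parameters (the paper states explicitly that the derivatives are not always smaller than $1$ on $I$). So two sequences agreeing on their first $k$ symbols need not have $J^s$-values within $C\lambda^k$ in the ordinary metric by a naive level-by-level estimate, and without exponential convergence you get neither well-definedness of $J_\beta$ on \emph{all} of $\{1,2\}^{\mathbb N}$ nor the H\"older bound. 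The paper's fix is to observe that $f_0$ and $f_1$ map a slightly enlarged interval $(1-R-\epsilon,R+\epsilon)$ strictly into itself and that any M\"obius transformation sending an interval strictly inside itself strictly contracts the Poincar\'e (hyperbolic) metric $\rho(x)\,dx=\tfrac{dx}{1-x^2}$ of that interval; since this metric is equivalent to the Euclidean one on the compact invariant interval, one recovers a uniform contraction rate $\lambda<1$ and hence the H\"older exponent $\delta$ with $\lambda=(1/2)^\delta$. You would need to supply this (or an equivalent device, e.g.\ a Hilbert projective metric or an ``eventually contracting after $n$ steps'' argument) to close the proof. A second, minor, point: your per-level factor conflates the imaginary unit with a product index ($\cos(i\beta)=\cosh\beta$ here), and the quantity $\tanh^2\beta$ does not in fact bound $|f_j'|$ on $I$.
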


\medskip

We will present later on section \ref{Ho} some pictures illustrating this behaviour (Cantor set or interval).

\medskip

 We say that there exists a {\bf Large Deviation Principle} (LDP for short) for the probability $\mu$ on $\Omega$ and the function $A: \Omega \to \mathbb{R}$, if there exist a lower-semicontinuous function $I:\mathbb{R} \to \mathbb{R}$, such that,
 \medskip

 a) for all closed sets $K \subset \mathbb{R}$ we get
$$ \lim_{n \to \infty} \frac{1}{n} \log  \bigg(\mu\,\,\bigg\{ z  \,\text{such that}\,\,\,\, {1 \over n} \sum_{j=0}^{n-1} A(\sigma ^{j} (z)) \in K\, \bigg\}\,\, \bigg) \leq \, - \inf_{s \in K} \, I(s),$$

b) for all open  sets $B \subset \mathbb{R}$ we get
$$ \lim_{n \to \infty} \frac{1}{n} \log  \bigg(\mu\,\,\bigg\{ z  \,\text{such that}\,\,\,\, {1 \over n} \sum_{j=0}^{n-1} A(\sigma ^{j} (z)) \in B\, \bigg\}\,\, \bigg)\geq \,- \inf_{s \in B} \, I(s).$$

\medskip

The above function $I$ is called the deviation function. We refer the reader to \cite{Leb}, \cite{Len}, \cite{Hi} and \cite{Oga} for several results on the topic of Large Deviations for Quantum Spin Systems.
It is known that when the probability $\mu$ is H\"older Gibbs (the case we consider here) and the function $A$ is Holder then it is true the Large Deviation principle and $I$ is an analytic function.

\medskip

Here we will present  explicit results for a certain potential $A:\Omega \to \mathbb{R}$ which depends just on the first coordinate, that is,
$$A(x_1,x_2,...,x_n...)=A(x_1).$$
Such $A$ is a H\"older function.

Here we denote	$\phib = \dfrac{e^{-\beta}}{\cos(i \beta)}= \frac{2}{e^{2\beta} + 1} \in (0,1)$, $\beta_1= \sin(2\, \theta)$ and $\beta_2=-\beta_1$. We assume that $A$ depends just on the first coordinate and  we set $$\displaystyle\delta(t)= \sum_{j}  e^{t\, \, A(j)}\,\text{ and }\,\alpha(t)=  \sum_{j} \beta_{j}\, e^{t\, \, A(j)}.$$

 We will prove the following result:

\begin{theorem}\label{teo2} Denote $\mu_\beta$,  $\beta>0$ the quantum spin probability.
In the case $A:\Omega \to \mathbb{R}$ depends just on the first coordinate on $\Omega$, the associated free energy function
$$c(t)=  \lim_{n \to \infty}  \frac{1}{n}\,\, \log \int e^{t\,(\,A(x) + A(\sigma(x)) + A(\sigma^{2} (x)) + ...
	+ A(\sigma^{n-1} (x))\, \,)} d \mu_\beta (x)$$
is given  by the expression

\begin{align*}
			&c(t)=\log\left( \frac{ \phib\delta(t) + \sqrt{\phib^2\delta(t)^2 + 4(-1+\phib)(\alpha(t)^2-\delta(t)^2)}}{4}\right).
			\end{align*}
The deviation function $I$ is the Legendre Transform of $c(t)$.
\end{theorem}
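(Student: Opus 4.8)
The plan is to reduce the free energy $c(t)$ to the logarithm of the leading eigenvalue of an explicit $2\times 2$ transfer matrix. Since $A$ depends only on the first coordinate, $A(\sigma^{j}x)$ depends only on $x_{j+1}$, so $A(x)+A(\sigma(x))+\dots+A(\sigma^{n-1}(x))=A(x_1)+\dots+A(x_n)$ is a function of the first $n$ coordinates; hence, using $\mu_\beta([j_1,\dots,j_n])=\mubn(j_1,\dots,j_n)$,
$$Z_n(t):=\int e^{t(A(x)+\dots+A(\sigma^{n-1}(x)))}\,d\mu_\beta(x)=\sum_{(j_1,\dots,j_n)\in\{1,2\}^{n}}e^{t(A(j_1)+\dots+A(j_n))}\,\mubn(j_1,\dots,j_n),$$
and the task is to show that $\frac1n\log Z_n(t)$ tends to the stated expression.

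First I would substitute the explicit formula for $\mubn$. Each factor in the product equals $\cos(i\beta)I^{\otimes n}+i\sin(i\beta)(\sigma^x_i\otimes\sigma^x_{i+1})_n=\cosh(\beta)\,I^{\otimes n}-\sinh(\beta)\,(\sigma^x_i\otimes\sigma^x_{i+1})_n$. Expanding the product over $i=1,\dots,n-1$, using $(\sigma^x)^2=I$, the multiplicativity of the trace over tensor factors, and $\tr P_j=1$, $\tr(\sigma^x P_j)=\beta_j$, one obtains
$$Z_n(t)=\frac{1}{\cosh^{\,n-1}(\beta)\,2^{\,n}}\sum_{(a_1,\dots,a_{n-1})\in\{0,1\}^{n-1}}\Bigl(\prod_{i=1}^{n-1}\gamma_{a_i}\Bigr)\,g(a_1)\Bigl(\prod_{k=2}^{n-1}g(a_{k-1}+a_k)\Bigr)g(a_{n-1}),$$
where $\gamma_0=\cosh\beta$, $\gamma_1=-\sinh\beta$, and $g$ takes the value $\delta(t)$ on even arguments and $\alpha(t)$ on odd arguments (arguments read modulo $2$, since only the powers $(\sigma^x)^{a}$ with $a\in\{0,1\}$ occur). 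This is exactly $Z_n(t)=\bigl(\cosh^{\,n-1}(\beta)\,2^{\,n}\bigr)^{-1}u^{T}M^{\,n-2}w$ with
$$M=\begin{pmatrix}\cosh\beta\,\delta(t)&\cosh\beta\,\alpha(t)\\ -\sinh\beta\,\alpha(t)&-\sinh\beta\,\delta(t)\end{pmatrix},\qquad u=\begin{pmatrix}\delta(t)\\ \alpha(t)\end{pmatrix},\qquad w=\begin{pmatrix}\cosh\beta\,\delta(t)\\ -\sinh\beta\,\alpha(t)\end{pmatrix}.$$

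Next I would study the spectrum of $M$. One computes $\tr M=(\cosh\beta-\sinh\beta)\delta(t)=e^{-\beta}\delta(t)$ and $\det M=\cosh\beta\sinh\beta\,(\alpha(t)^2-\delta(t)^2)$. Because $\beta_2=-\beta_1$ and $|\beta_1|=|\sin 2\theta|\le1$, one has $\alpha(t)^2\le\delta(t)^2$, hence $\det M\le0$ and $(\tr M)^2-4\det M\ge e^{-2\beta}\delta(t)^2>0$; thus $M$ has two real eigenvalues $\lambda_{\pm}=\tfrac12\bigl(e^{-\beta}\delta(t)\pm\sqrt{(\tr M)^2-4\det M}\bigr)$ with $\lambda_+>0\ge\lambda_-$, and $\lambda_+$ is strictly dominant since $\lambda_++\lambda_-=\tr M>0$. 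Decomposing $M=\lambda_+P_++\lambda_-P_-$ into spectral projections gives $u^{T}M^{\,n-2}w=\lambda_+^{\,n-2}(u^{T}P_+w)+\lambda_-^{\,n-2}(u^{T}P_-w)$, and the coefficient $u^{T}P_+w$ cannot vanish: otherwise $Z_n(t)$ would be a nonzero multiple of $\lambda_-^{\,n-2}$ and would change sign with $n$, contradicting the fact that $Z_n(t)$, being the integral of a strictly positive function against a probability measure, is positive. Therefore $\frac1n\log Z_n(t)\to\log\lambda_+-\log\cosh\beta-\log2=\log\bigl(\lambda_+/(2\cosh\beta)\bigr)$. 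It then remains to rewrite this quantity: dividing the numerator and the radicand of $\lambda_+/(2\cosh\beta)$ by $\cosh\beta$ and using $\phib=e^{-\beta}/\cosh\beta$ together with the identity $2\sinh(2\beta)/\cosh^2(\beta)=4(1-\phib)$ (equivalently $\phib-1=-\tanh\beta$) turns $\log\bigl(\lambda_+/(2\cosh\beta)\bigr)$ into precisely the claimed formula for $c(t)$.

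Finally, the formula exhibits $c$ as a real-analytic function on $\R$ — the radicand equals $\phib^2\delta(t)^2+4(\phib-1)(\alpha(t)^2-\delta(t)^2)\ge\phib^2\delta(t)^2>0$ — and $c$ is convex, being a limit of normalized logarithmic moment generating functions. Hence the G\"artner--Ellis theorem applies (equivalently, one may invoke the general large deviation principle for H\"older Gibbs measures and H\"older observables recalled above), giving the LDP with rate function $I=c^{*}$, the Legendre transform of $c$. I expect the main obstacle to be the expansion in the second paragraph — the modulo-$2$ bookkeeping of the $\sigma^x$ exponents at each site and their collapse into the transfer matrix $M$ — together with establishing the dominance of $\lambda_+$ and the non-degeneracy $u^{T}P_+w\neq0$ even though $M$ is not a positive matrix; here the positivity $Z_n(t)>0$ makes the argument short.
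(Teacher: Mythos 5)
Your proposal is correct, but it takes a genuinely different route from the paper's. The paper works at the level of the measure: it first proves a recurrence for cylinder probabilities (Theorem \ref{medidacilindro}), sums it against $e^{tS_n(A,\cdot)}$ to obtain a long linear relation expressing $Q_n(t)$ through all of $Q_{n-1}(t),\dots,Q_1(t)$ (Theorem \ref{teoQ}), telescopes this into the second-order recurrence $Q_{n+2}(t)=\frac{\phib}{2}\delta(t)\,Q_{n+1}(t)+(-1+\phib)\frac{\alpha(t)^2-\delta(t)^2}{4}\,Q_n(t)$ (Proposition \ref{recurrence_relation}), and reads off $c(t)=\log r_1(t)$ from the larger characteristic root via Theorem \ref{scheinerman}. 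You instead expand the trace formula for $\mubn$ directly and package the resulting sum over the exponents $a_i\in\{0,1\}$ as a bilinear form $u^{T}M^{\,n-2}w$ in a $2\times 2$ transfer matrix; after normalizing by $2\cosh\beta$ per step your matrix has trace $\frac{\phib}{2}\delta(t)$ and determinant $(1-\phib)\frac{\alpha(t)^2-\delta(t)^2}{4}$ (using $1-\phib=\tanh\beta$), so its characteristic polynomial coincides with the paper's characteristic equation and the two computations necessarily agree. Your version buys two things: it bypasses the two technical appendix computations (Theorems \ref{teoQ1} and \ref{recurrence_relation1}), and, more substantively, it supplies a non-degeneracy step that the paper leaves implicit. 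Writing $Q_n(t)=c_1(t)r_1(t)^n+c_2(t)r_2(t)^n$ gives $\lim_n\frac1n\log Q_n(t)=\log r_1(t)$ only if $c_1(t)\neq 0$; the paper does not verify this, whereas your observation that $Q_n(t)>0$ for every $n$ while $r_2(t)<0$ settles it in one line (and could be grafted verbatim onto the paper's argument). The only cosmetic discrepancy is an index shift (your $Z_n$ is the paper's $Q_{n-1}$), which is irrelevant for the exponential growth rate.
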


\bigskip

The last claim follows from classical results.
As the probability $\mu_\beta$ is a H\"older Gibbs state the deviation function $I$ is the  Legendre transform of  $c(t)$ (see \cite{El}, \cite{Lo2}, \cite{Lo7} or \cite{Lo3}). The proof of the above Theorem will be done on section \ref{LDP}.

\medskip

In section \ref{zero} we will present some results for the quantum spin probability $\mu$ at temperature zero which complement the ones in \cite{LMMM}. Among other things we compute the entropy of such $\mu$ and we present an ergodic conjugacy with another dynamical system which is somehow ``related'' to the independent Bernoulli probability.

\medskip

   \begin{theorem} \label{zeze} Denote by $\mu$ the zero temperature quantum spin probability, as described in \cite{LMMM}, by  $\mu_0$ the uniform probability $(1/2,1/2)$ on the set $\{0,1\}$ and by $\mu_p$ the independent Bernoulli probability $(p,1-p)$ on $\{0,1\}^{\mathbb N}$, where $p=\frac{1+\sin(2\, \theta)}2 .$
   There exists an ergodic equivalence $H$ between the shift $\sigma$
   	acting on  $(\{1,2\}^{\mathbb N},\mu)$ and the  transformation $T$ acting on $(\{0,1\} \times \{0,1\}^{\mathbb N},\mu_0\times \mu_p)$, where
   $$T(c_0,c_1,...)=(1-c_0,\sigma(c_1,c_2,...)).$$

   The entropy of $\mu$ is $h(\mu) \,=\, - p\, \log p - (1-p)\, \log (1-p).$

   \end{theorem}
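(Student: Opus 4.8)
The plan is to first make the zero temperature probability $\mu$ completely explicit, and then exhibit $H$ as a composition $H=G\circ\Psi$ of two elementary isomorphisms; the entropy formula will then drop out because $T$ is literally a direct product.

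\textbf{Step 1 (the structure of $\mu$).} First I would recall from \cite{LMMM} (equivalently, let $\beta\to\infty$ in the formula for $\mubn$): since $\tr(e^{-\beta H_n})^{-1}e^{-\beta H_n}\to\tfrac12 P$ with $P$ the orthogonal projection onto the ground space of $H_n$, and since the commuting summands of $H_n$ are diagonal in the $\sigma^x$–product basis, that ground space is two–dimensional, spanned by the alternating vectors $\Phi^{(n)}_{+}=u_+\otimes u_-\otimes u_+\otimes\cdots$ and $\Phi^{(n)}_{-}=u_-\otimes u_+\otimes u_-\otimes\cdots$, where $u_\pm=\tfrac1{\sqrt2}(1,\pm1)$ are the $\pm1$–eigenvectors of $\sigma^x$. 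Expanding $\mubn(j_1,\dots,j_n)=\tfrac12\langle\Phi^{(n)}_+,(P_{j_1}\otimes\cdots\otimes P_{j_n})\Phi^{(n)}_+\rangle+\tfrac12\langle\Phi^{(n)}_-,(P_{j_1}\otimes\cdots\otimes P_{j_n})\Phi^{(n)}_-\rangle$ and using $|\langle u_+,\psi_1\rangle|^2=|\langle u_-,\psi_2\rangle|^2=\tfrac{1+\sin(2\theta)}2=p$ together with $|\langle u_+,\psi_2\rangle|^2=|\langle u_-,\psi_1\rangle|^2=1-p$, one gets $\mu=\tfrac12(\nu_++\nu_-)$, where under $\nu_+$ the coordinates of $\{1,2\}^{\mathbb N}$ are independent with the odd ones distributed as $(p,1-p)$ and the even ones as $(1-p,p)$, and under $\nu_-$ the roles of odd and even are exchanged. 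Because $\theta\in(0,\pi/2)$ and $\theta\neq\pi/4$ we have $p\in(\tfrac12,1)$, hence $\nu_+\perp\nu_-$; note also $\sigma_*\nu_+=\nu_-$, which re-proves shift invariance of $\mu$.

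\textbf{Step 2 (killing the alternation).} Next I introduce the bijection $\Psi\colon\{1,2\}^{\mathbb N}\to\{0,1\}^{\mathbb N}$ with $\Psi(x)_k=x_k-1$ for $k$ odd and $\Psi(x)_k=2-x_k$ for $k$ even. A direct computation gives $\Psi\circ\sigma=\widetilde\sigma\circ\Psi$, where $\widetilde\sigma(e)_k=1-e_{k+1}$ is ``shift, then flip all bits'' (so $\widetilde\sigma(e)=\overline{\sigma(e)}$, bar denoting bitwise complement); moreover $\Psi$ carries $\nu_+$ to the Bernoulli product $P_p:=(p,1-p)^{\otimes\mathbb N}$, $\nu_-$ to $P_{1-p}$, and hence $\mu$ to $\widetilde\mu:=\tfrac12 P_p+\tfrac12 P_{1-p}$. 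Thus $(\sigma,\mu)$ is isomorphic to $(\widetilde\sigma,\widetilde\mu)$.

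\textbf{Step 3 (decoupling the phase, and conclusion).} Since $P_p\perp P_{1-p}$, for $\widetilde\mu$–a.e.\ $e$ the frequency of $0$'s in $e$ converges, to $p$ or to $1-p$; set $b(e)=0,\ f(e)=e$ in the first case and $b(e)=1,\ f(e)=\bar e$ in the second. Then $G(e):=(b(e),f(e))$ is a measure isomorphism of $(\{0,1\}^{\mathbb N},\widetilde\mu)$ onto $(\{0,1\}\times\{0,1\}^{\mathbb N},\mu_0\times\mu_p)$: $f(e)$ is distributed as $\mu_p$ whatever $b(e)$ is, $b(e)$ is a fair bit independent of $f(e)$, and the inverse is $(c_0,\omega)\mapsto\omega$ for $c_0=0$, $(c_0,\omega)\mapsto\bar\omega$ for $c_0=1$. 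Applying $\widetilde\sigma$ shifts and complements $e$; the shift does not change the asymptotic frequency of $0$'s while the complement interchanges $p$ and $1-p$, so $b(\widetilde\sigma e)=1-b(e)$; and since complementing commutes with the shift, a two–line case check (e.g.\ if $b(e)=0$ then $f(\widetilde\sigma e)=\overline{\widetilde\sigma e}=\overline{\overline{\sigma e}}=\sigma e=\sigma(f(e))$) gives $f(\widetilde\sigma e)=\sigma(f(e))$ in both cases. Hence $G\circ\widetilde\sigma=T\circ G$ with $T(c_0,\omega)=(1-c_0,\sigma\omega)$, and $H:=G\circ\Psi$ is the asserted ergodic equivalence. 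Finally $T$ is literally the direct product of the ergodic period–two rotation on $(\{0,1\},\mu_0)$ with the mixing Bernoulli shift on $(\{0,1\}^{\mathbb N},\mu_p)$, so $T$ — and therefore $(\sigma,\mu)$ — is ergodic; and since entropy is an isomorphism invariant additive over products, $h(\mu)=h(T)=h(\mu_0)+h(\mu_p)=-p\log p-(1-p)\log(1-p)$.

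\textbf{Where the difficulty is.} The only genuinely delicate step is Step 3: one must recognize that the sole obstruction between $\widetilde\mu$ and the honest product $\mu_0\times\mu_p$ is the two–fold ambiguity in writing $e_k=d_k\oplus b$ with $(d_k)$ i.i.d.\ $(p,1-p)$ and $b$ a fair bit (namely $(b,(d_k))$ and $(1-b,(\bar d_k))$ yield the same $e$) — a shadow of the two Néel ground states — and that this ambiguity is removed not by any continuous rule but by reading the ``phase'' $b(e)$ off the asymptotic symbol frequency, so that $H$ is a measure isomorphism and not a topological conjugacy. Checking that this ad hoc decoupling is exactly intertwined with $\widetilde\sigma$ and $T$ is the point where the bit–flip built into $\widetilde\sigma$ (inherited from the alternation of the ground state) matches the ``$1-c_0$'' in the definition of $T$; everything else is bookkeeping.
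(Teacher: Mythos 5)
Your proposal is correct, but it takes a genuinely different route from the paper's. The paper works entirely inside the Jacobian formalism: it first proves (Theorem \ref{port}, via the continued-fraction recoding into the alphabets $\{a,b\}$ and $\{\alpha,\beta\}$) that $J$ takes only the two values $p$ and $1-p$ almost everywhere, deduces $\mu(A)=p$ and the entropy from Rokhlin's formula $h(\mu)=-\int\log J\,d\mu$, and then assembles $H$ from the itinerary map $h(\theta)=(\chi_B(\sigma^j\theta))_{j}$ together with the first symbol $2-a_0$, which resolves the two-to-one ambiguity $h(\theta)=h(\theta^*)$. You instead diagonalize the zero-temperature state directly: the ground space of $H_n$ is spanned by the two N\'eel vectors in the $\sigma^x$ product basis, so $\mu=\frac12(\nu_++\nu_-)$ is an explicit mixture of two product measures with period-two alternating marginals; the equivalence is then a deterministic recoding $\Psi$ followed by reading the ``phase'' off asymptotic symbol frequencies, and the entropy drops out of additivity over the product system $T$. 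Your approach buys a lot: it bypasses the continued-fraction analysis entirely, it explains structurally why $J$ takes exactly two values (the two ground states are the two ergodic components of $\mu$ under $\sigma^2$), and it makes the non-mixing of $\mu$ and the failure of topological conjugacy transparent. The paper's approach stays within the recursion/Jacobian machinery already built for positive temperature, and its phase variable (the first symbol) is finite-range and explicit on cylinders, whereas yours is tail-measurable --- both are legitimate mod null sets. One anchoring point you should make explicit: Step 1 identifies the measure of \cite{LMMM} with $\lim_{\beta\to\infty}\mu_\beta$, which the paper only asserts in a footnote; to be airtight, either invoke that or verify directly that $\frac12(\nu_++\nu_-)$ satisfies the defining recursion $\mu(k_0,\dots,k_n)=a(k_0,k_1)\,\mu(k_1,\dots,k_n)+b(k_0,k_1)\,\mu(k_2,\dots,k_n)$ together with $\mu(1)=\mu(2)=1/2$, which determines $\mu$ uniquely; this is a short two-case computation (e.g.\ it reproduces $\mu(1,1)=p(1-p)=\gamma$).
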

	
\medskip

We will present later on section \ref{zero} a picture showing the behaviour of the values of the Jacobian of the zero temperature quantum spin probability in this case.

Part of the present paper was described on the Master dissertation \cite{Br}.

\medskip

In the appendix we will provide some proofs (of theorems and propositions of the paper) which are more technical.
		
		%	Note que a expressão explicita de $c(t)$ permite estimar (via transformada de Legendre) a função de desvio $I$.
		%	\medskip

\section{Recurrence formulas and construction of the quantum spin probability}

In this section we will study initial properties of the quantum spin probability and provide recurrence formulas for calculate it in cylinders.

%Note that $(\sigma^x)^2=I.$

\begin{notation} For $n\ge 3$,
				\[(\sigma^x_{l}\otimes \sigma^x_{l+1})_n =\left\{	
				\begin{array}{ll}
				\sigma^x\otimes \sigma^x \otimes \underbrace{I  \otimes... \otimes I}_{n-2},& l=1\\
				\underbrace{I  \otimes... \otimes I}_{n-2}\otimes \sigma^x\otimes \sigma^x,& l=n-1\\
				\underbrace{I  \otimes... \otimes I}_{l-1}\otimes \sigma^x\otimes \sigma^x \otimes \underbrace{I  \otimes... \otimes I}_{n-l-1}, & 1<l<n-1		\end{array}\right..
	\]
			\end{notation}
%			 Now, we will estimate $e^{-\beta H_n}:(\mathbb{C}^2)^{ \otimes n} \to (\mathbb{C}^2)^{\otimes n}$. As $(\sigma^x_{i}\otimes \sigma^x_{i+1})_n$ commutes with $(\sigma^x_{j}\otimes \sigma^x_{j+1})_n$, $i,j \in \{1,...,n-1\}$, we get
From \cite{LMMM} we get
			\begin{align*}
			e^{-\beta H_n} &= e^{-\, \beta\,[\sum_{l=1}^{n-1} (\sigma^x_{l}\otimes \sigma^x_{l+1})_n]}=\prod_{l=1}^{n-1}e^{ -\, \beta\,(\sigma^x_{l}\otimes \sigma^x_{l+1})_n}\\
			&= \prod_{l=1}^{n-1} [\cos(i \beta) \, I^{\otimes n} \,+\,i\, \sin(i \beta)\, (\sigma^x_{l}\otimes \sigma^x_{l+1})_n],
			\end{align*}
			\noindent where the product means composition of operators. Furthermore
 ${\tr(e^{-\beta\,H_n})}={\cos^{n-1}(i \beta) 2^n}$.

		We 	 denote $\Phi= \phib := \dfrac{e^{-\beta}}{\cos(i \beta)}$. Note that as $i\,\sin(\beta i)= - \cos(\beta i)+e^{-\beta}$, we get $\dfrac{i\,\sin(i\beta)}{\cos(i\beta)} = -\,1 \,+\, \phib$.
	 In this way  we can express the probability $\mubn$ as: 	{\small 		
				\begin{equation}\label{def_mubn} \mubn(j_1,...,j_n) =\frac{1}{2^n}\tr\left[\prod_{i=1}^{n-1} [I^{\otimes n} + ( \phib -1) (\sigma^x_{i}\otimes \sigma^x_{i+1})_n](   P_{j_1} \otimes ...\otimes P_{j_n})\right].
				\end{equation}	}

		%			\begin{equation}\label{def_mubn}
		%			\hspace{-10px}\mubn(j_1,...,j_n) = \frac{1}{2^n}\tr\left[\prod_{i=1}^{n-1} [I^{\otimes n} \,-\, (\sigma^x_{i}\otimes \sigma^x_{i+1})_n \,+\, \phib\, (\sigma^x_{i}\otimes \sigma^x_{i+1})_n](   P_{j_1} \otimes  P_{j_2}\,\otimes...\otimes P_{j_n})\right]
		%			\end{equation}	

We define  $\mu_{\beta,1}(1)=1/2$ and $\mu_{\beta,1}(2)=1/2$. For cylinders of length $2$ and $3$ the probability is computed below.

			\begin{example}\label{ex_mu2emu3}

				\begin{align*} \mu_{\beta,2}(a,b) &= \frac{1}{2^2}\tr\left[ (I\otimes I \,-\, (\sigma^x_{1}\otimes \sigma^x_{2})_2 \,+\, \phib\, (\sigma^x_{1}\otimes \sigma^x_{2})_2)(   P_{a} \otimes  P_{b})\right] \\
				&= \frac{1}{2^2}\tr\left[ P_{a} \otimes  P_{b} \right] - \frac{1}{2^2}\tr\left[ \sigma^x_{1}P_{a} \otimes  \sigma^x_{2}P_{b} \right] + \frac{1}{2^2}\,\phib\,\tr\left[ \sigma^x_{1}P_{a} \otimes  \sigma^x_{2}P_{b} \right]  \\
				&= \frac{1}{2^2}[1\,-\,\beta_{a}\beta_{b}] + \frac{1}{2^2}\phib\beta_{a}\beta_{b}.
				\end{align*}
and
	\begin{align*} \mu_{\beta,3}(a,b,c) &= \frac{1}{2^3}\tr\left[\begin{array}{lll}
				(I\otimes I \otimes I \,+\, (-1+\phib)(\sigma^x_{1}\otimes \sigma^x_{2})_3) \circ\\
				(I\otimes I \otimes I \,+\, (-1+\phib)(\sigma^x_{2}\otimes \sigma^x_{3})_3)\circ\\
				(P_{a} \otimes  P_{b} \otimes  P_{c})	
				\end{array}\right]\\
				&= \frac{1}{2^3}\bigg[ 1 + (-1+\phib)\beta_a\beta_b  + (-1+\phib)\beta_b\beta_c + (-1+\phib)^2\beta_a\beta_c\bigg].
				\end{align*}

			\end{example}

\medskip

			\begin{remark}\label{phibin01}
				We have $0 < \phib < 1$, for all $\beta \in (0, \infty)$. Indeed, $e^{\beta} = \cos(i\beta) + i \sin(i\beta)$, then
%				= \frac{2e^{-\beta}}{e^{\beta}+e^{-\beta}} = \frac{2}{e^{2\beta} + 1}

				$$\cos(i\beta) = \frac{e^{\beta} + e^{-\beta}}{2}.$$
				
				\noindent In this way we get
				
				$$\phib = \frac{2e^{-\beta}}{e^{\beta}+e^{-\beta}} = \frac{2}{e^{2\beta} + 1} \in (0,1).$$		
				
			\end{remark}
			
The computations of $\mu_\beta$ for cylinders of size $4,5,...$ can be done from recurrence formulas introduced below.			
Before this, let us present the following result			
			\medskip

			\begin{theorem}\label{def_mub} There exists a unique probability
				$\mub$ over $\Omega := \{1,2\}^{\mathbb{N}}$, such that, for any $n \in \{1,2,3,...\}$ and any cylinder set  $[j_1,...,j_n]$ we have:
				$$\mub([j_1,...,j_n]) = \mubn(j_1,...,j_n).$$ Furthermore it is invariant by the shift map $\sigma$.
				
			\end{theorem}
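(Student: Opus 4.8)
The plan is to verify that the family $\{\mu_{\beta,n}\}_{n\ge 1}$ forms a consistent (Kolmogorov) family of probability measures on the cylinders of $\Omega = \{1,2\}^{\mathbb{N}}$, and then to invoke the Kolmogorov extension theorem to produce the unique $\mu_\beta$ with $\mu_\beta([j_1,\dots,j_n]) = \mu_{\beta,n}(j_1,\dots,j_n)$. Two things must be checked for consistency: first, that each $\mu_{\beta,n}$ is genuinely a probability, i.e. $\mu_{\beta,n}(j_1,\dots,j_n)\ge 0$ and $\sum_{(j_1,\dots,j_n)}\mu_{\beta,n}(j_1,\dots,j_n)=1$; second, the marginalization identity $\sum_{j_{n+1}\in\{1,2\}} \mu_{\beta,n+1}(j_1,\dots,j_n,j_{n+1}) = \mu_{\beta,n}(j_1,\dots,j_n)$, together with the analogous identity summing over $j_1$ (which is what gives shift-invariance).

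For nonnegativity I would argue as follows: the operator $\rho_\omega = e^{-\beta H_n}/\tr(e^{-\beta H_n})$ is a positive operator of trace one (a density matrix, since $H_n$ is self-adjoint), and $P_{j_1}\otimes\cdots\otimes P_{j_n}$ is an orthogonal projection, hence positive; therefore $\mu_{\beta,n}(j_1,\dots,j_n) = \tr(\rho_\omega\,(P_{j_1}\otimes\cdots\otimes P_{j_n})) \ge 0$ because the trace of a product of two positive operators is nonnegative. For the normalization $\sum \mu_{\beta,n} = 1$, I would use that $\sum_{j}P_j = I$ on $\mathbb{C}^2$ (the $P_j$ form a resolution of the identity since $\psi_1,\psi_2$ is an orthonormal basis), so $\sum_{(j_1,\dots,j_n)} P_{j_1}\otimes\cdots\otimes P_{j_n} = I^{\otimes n}$, and thus $\sum \mu_{\beta,n}(j_1,\dots,j_n) = \tr(\rho_\omega\, I^{\otimes n}) = \tr(\rho_\omega) = 1$.

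The consistency identity is the heart of the matter, and I expect it to be the main obstacle. Summing over $j_{n+1}$ again uses $\sum_{j_{n+1}}P_{j_{n+1}} = I$, which collapses the last tensor factor of the projection to $I$; but one must also relate $e^{-\beta H_{n+1}}$ restricted (via partial trace over the last site) to $e^{-\beta H_n}$. Using the factored form $e^{-\beta H_{n+1}} = \prod_{l=1}^{n}[\cos(i\beta)I^{\otimes(n+1)} + i\sin(i\beta)(\sigma^x_l\otimes\sigma^x_{l+1})_{n+1}]$, the only factor touching site $n+1$ is the $l=n$ one; taking the partial trace $\tr_{n+1}$ of $[\cos(i\beta)I + i\sin(i\beta)(\sigma^x_n\otimes\sigma^x_{n+1})]$ against $I$ on the last slot gives $2\cos(i\beta)\,I^{\otimes n}$ on the first $n$ sites, because $\tr(\sigma^x)=0$ and $\tr(I_{\mathbb{C}^2})=2$. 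Combined with the normalization constants $\tr(e^{-\beta H_n}) = \cos^{n-1}(i\beta)2^n$, the ratio works out exactly so that $\sum_{j_{n+1}}\mu_{\beta,n+1}(j_1,\dots,j_n,j_{n+1}) = \mu_{\beta,n}(j_1,\dots,j_n)$. The same computation with the $l=1$ factor and summation over $j_1$ yields $\sum_{j_1}\mu_{\beta,n+1}(j_1,j_2,\dots,j_{n+1}) = \mu_{\beta,n}(j_2,\dots,j_{n+1})$, which is precisely the statement $\mu_\beta(\sigma^{-1}[j_2,\dots,j_{n+1}]) = \mu_\beta([j_2,\dots,j_{n+1}])$ on cylinders; since cylinders generate the Borel $\sigma$-algebra, shift-invariance of $\mu_\beta$ follows. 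Uniqueness is immediate from the uniqueness clause in Kolmogorov's theorem (a measure is determined by its values on the generating algebra of cylinders). The care needed is purely in bookkeeping the trigonometric/combinatorial constants and in handling the partial traces cleanly; the structural facts ($P_j$ a resolution of identity, $\rho_\omega$ a density matrix, $\tr\sigma^x=0$) do all the real work.
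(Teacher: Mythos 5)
Your proposal is correct and follows essentially the same route as the paper: the paper's proof also reduces to the two marginalization identities (summing over the last and the first coordinate), proved in the appendix exactly by your mechanism — only the $l=n$ (resp.\ $l=1$) factor of the product touches the summed site, $\sum_j P_j = I$, $\tr(\sigma^x)=0$, $\tr(I)=2$, and the normalization $\tr(e^{-\beta H_n})=\cos^{n-1}(i\beta)2^n$ — followed by the extension theorem (Carath\'eodory in the paper, Kolmogorov in yours, which amounts to the same thing on the cylinder algebra) and shift-invariance from the first-coordinate identity. Your explicit verification of nonnegativity and normalization via the density-matrix and resolution-of-identity structure is a sound addition that the paper leaves implicit.
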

			\begin{proof}
			From Theorem \ref{hipotese_kolmogorov1} in Appendix, for all $n\in \{1,2,3,...\}$ we get
			\begin{equation*}
			\mu_{\beta,n+1}(j_1, ..., j_n, 1) + \mu_{\beta,n+1}(j_1, ..., j_n, 2) = \mubn(j_1, ..., j_n).
			\end{equation*}
			In a similar way, from symmetry of (\ref{def_mubn}), we obtain, for all  $n\in \{1,2,3,...\}$,
			\[\mu_{\beta,n+1}(1, j_1, ..., j_n) + \mu_{\beta,n+1}(2, j_1, ..., j_n)=	\mubn(j_1, ..., j_n).\]
			The existence and uniqueness follow from the Caratheodory extension theorem
			 (see \cite{Durrett} or \cite{Viana}). The invariance by  $\sigma$ follows from  above equation.
			\end{proof}
			
			\begin{notation}
				$\mub(j_1,...,j_n) := \mub([j_1,...,j_n]).$
			\end{notation}
\medskip

			The ergodic properties of the probability  $\mub$ is the main object of the present paper. The case when temperature is zero $(\beta \to \infty$) was considered in \cite{LMMM}. In the end of the paper  we will present some more
results which complement the analysis of \cite{LMMM}.
Related results appear in \cite{RMNS} and \cite{Pil}.

\medskip

From now on let us present two recurrence formulas which are analogous - but more complex - to the ones in \cite{LMMM}.
		
		\begin{theorem}\label{medidacilindro}
			For the probability  $\mub$ and for any $n\ge 2$, we get
			$$
			\mub(k,j_1,...,j_n)= $$
			$$\frac{\mub( j_1,...,j_n )}{2}  + \sum_{i=1}^{n-2}\frac{(-1+\phib)^i \beta_k \beta_{j_i} }{2^{i+1}}\mub(j_{i+1},...,j_n)  +$$
$$\frac{(-1+\phib)^{n-1}\beta_k\beta_{j_{n-1}}}{2^{n+1}}+ \frac{(-1+\phib)^{n}\beta_k\beta_{j_n}}{2^{n+1}}.
			$$
		\end{theorem}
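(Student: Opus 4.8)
\textbf{Proof proposal for Theorem \ref{medidacilindro}.}

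The plan is to start from the explicit trace formula \eqref{def_mubn} for $\mub(k,j_1,\dots,j_n) = \mu_{\beta,n+1}(k,j_1,\dots,j_n)$, which involves the ordered product $\prod_{i=1}^{n}[I^{\otimes(n+1)} + (\phib-1)(\sigma^x_i\otimes\sigma^x_{i+1})_{n+1}]$ acting on $P_k\otimes P_{j_1}\otimes\cdots\otimes P_{j_n}$. The only factor in this product that involves the first tensor slot (the one carrying $P_k$) is the $i=1$ term $I^{\otimes(n+1)} + (\phib-1)(\sigma^x_1\otimes\sigma^x_2)_{n+1}$. So the first step is to expand the product as a sum over which factors contribute their $(\phib-1)(\sigma^x\otimes\sigma^x)$ part versus their identity part, and to peel off the first slot: since $\tr(P_k) = 1$ and $\tr(\sigma^x P_k) = \beta_k$, each term in the expansion contributes either a factor $1$ (if the $i=1$ factor gave identity on slot $1$) or a factor $\beta_k$ (if the $i=1$ factor gave $\sigma^x$ on slot $1$), and in the latter case a $\sigma^x$ is also deposited on slot $2$, i.e.\ on $P_{j_1}$.

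Next I would organize the remaining sum according to the ``propagation'' of the $\sigma^x$ coming from the $i=1$ factor through the chain. Concretely: if the $i=1$ factor contributes identity, the remaining trace is over $\prod_{i=2}^{n}[\cdots]$ applied to $P_{j_1}\otimes\cdots\otimes P_{j_n}$, which (after reindexing) is exactly $2^n\,\mub(j_1,\dots,j_n)$; this gives the leading term $\tfrac12\mub(j_1,\dots,j_n)$ after accounting for the $2^{-(n+1)}$ prefactor. If instead the $i=1$ factor contributes $(\phib-1)\sigma^x\otimes\sigma^x$, then $P_{j_1}$ is replaced by $\sigma^x P_{j_1}$; one then asks whether the $i=2$ factor also fires. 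If it does not fire, $\tr(\sigma^x P_{j_1}) = \beta_{j_1}$ detaches slot $2$ and the rest is a trace over $P_{j_2}\otimes\cdots\otimes P_{j_n}$; if it does fire, a $\sigma^x$ is pushed onto slot $3$ and we continue. Iterating, one sees that the contribution is indexed by the first factor $i_0\ge 2$ that does \emph{not} fire (equivalently, the $\sigma^x$ front travels from slot $2$ up to slot $i_0$ and is absorbed there), producing a factor $(\phib-1)^{i_0-1}\beta_k\beta_{j_{i_0-1}}$ times a trace over the tail $P_{j_{i_0}}\otimes\cdots\otimes P_{j_n}$, i.e.\ $2^{n-i_0+1}\mub(j_{i_0},\dots,j_n)$. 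Summing over $i_0$ from $2$ to $n$ gives the middle sum $\sum_{i=1}^{n-2}\tfrac{(\phib-1)^i\beta_k\beta_{j_i}}{2^{i+1}}\mub(j_{i+1},\dots,j_n)$ (with $i = i_0-1$), and the two boundary possibilities --- every factor up to $i=n-1$ fires but $i=n$ does not, versus all factors fire --- give the two final terms with $\beta_k\beta_{j_{n-1}}$ and $\beta_k\beta_{j_n}$; here one uses $\tr(\sigma^x P_{j_n}) = \beta_{j_n}$ and the bare trace $\tr(P_{j_n})=1$ for the two cases, which explains why the last two terms have no residual $\mub$ factor and carry $2^{-(n+1)}$. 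An alternative, perhaps cleaner, route is to prove the identity by induction on $n$ using the recurrence implicit in \eqref{def_mubn}, peeling one factor at a time; but the direct combinatorial expansion above makes the structure of the answer transparent.

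The main obstacle I anticipate is bookkeeping: keeping careful track of (a) the powers of $2$ as slots get detached by traces $\tr(P)=1$ or $\tr(\sigma^x P)=\beta_j$, (b) the exact range of the summation index and the correct identification of the two boundary terms, and (c) the fact that the matrices $\sigma^x P_{j}$ are not themselves projections or multiples of the $P_j$'s, so one cannot naively continue the ``$\mub$ of a shorter cylinder'' interpretation once a $\sigma^x$ has been deposited --- it is precisely the \emph{absorption} step $\tr(\sigma^x P_{j_i}) = \beta_{j_i}$ that re-exposes a clean tail $P_{j_{i+1}}\otimes\cdots\otimes P_{j_n}$ on which \eqref{def_mubn} applies again. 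Handling the telescoping of the $\sigma^x$ front correctly, and checking the edge cases $n=2$ (where the middle sum is empty and only the last two terms plus the leading term survive, matching Example \ref{ex_mu2emu3} after summing over the last coordinate) is where the care is needed; the rest is routine linear algebra on $\mathcal{M}_2\otimes\cdots\otimes\mathcal{M}_2$.
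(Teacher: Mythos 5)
Your proposal is correct and follows essentially the same route as the paper's own proof in the appendix (Theorem \ref{medidacilindro1}): iteratively peeling the factors of the product in (\ref{def_mubn}) from the left, tracking the propagation of the $\sigma^x$ front through the slots and absorbing it via $\tr(\sigma^x P_{j_i})=\beta_{j_i}$ so that a clean shorter cylinder $\mub(j_{i+1},\dots,j_n)$ reappears. The only nit is the stated range of $i_0$ (for the middle sum it should run from $2$ to $n-1$, with $i_0=n$ being the first boundary term), a bookkeeping slip you yourself flag as the place requiring care.
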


For the proof  see Appendix (Theorem \ref{medidacilindro1})

		\bigskip
		
		\begin{theorem}\label{recursive}
			For any $n\geq 1$, we get
			$$
				\mub (k_0 , k_{1} ,...,k_n)=$$
$$\frac{1}{2}\bigg(1+\frac{\beta_{k_0}}{\beta_{k_1}}(-1+\phib)\bigg)\mub (k_1 , ...,k_{n})\,+
										   \frac{\beta_{k_0}}{2}(-1+\phib) \bigg(\frac{-1}{2\beta_{k_{1}}}+\frac{\beta_{k_{1}}}{2}\bigg)\mub (k_2,...,k_{n}).
			$$

		\end{theorem}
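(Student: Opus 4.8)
The plan is to derive Theorem~\ref{recursive} from Theorem~\ref{medidacilindro} by eliminating the ``tail'' sums. First I would write down the statement of Theorem~\ref{medidacilindro} twice: once for the word $(k_0,k_1,\dots,k_n)$ (so with $k=k_0$ and the symbols $j_1,\dots,j_n$ replaced by $k_1,\dots,k_n$), and once for the shorter word $(k_1,k_2,\dots,k_n)$ (so with $k=k_1$ and $j_1,\dots,j_{n-1}$ replaced by $k_2,\dots,k_n$). This gives
\begin{align*}
\mub(k_0,k_1,\dots,k_n)&=\frac{\mub(k_1,\dots,k_n)}{2}+\sum_{i=1}^{n-2}\frac{(-1+\phib)^i\beta_{k_0}\beta_{k_i}}{2^{i+1}}\mub(k_{i+1},\dots,k_n)\\
&\quad+\frac{(-1+\phib)^{n-1}\beta_{k_0}\beta_{k_{n-1}}}{2^{n+1}}+\frac{(-1+\phib)^{n}\beta_{k_0}\beta_{k_n}}{2^{n+1}},
\end{align*}
and an analogous identity for $\mub(k_1,\dots,k_n)$ in terms of $\mub(k_2,\dots,k_n),\dots$. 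The key observation is that the long sum in the first identity, namely $\sum_{i=1}^{n-2}\frac{(-1+\phib)^i\beta_{k_0}\beta_{k_i}}{2^{i+1}}\mub(k_{i+1},\dots,k_n)$ together with the two boundary terms, is — up to the overall factor $\beta_{k_0}/\beta_{k_1}$ applied termwise — essentially the same combination that appears (with $k=k_1$) in the second identity. So the strategy is: solve the second identity for that whole tail combination, substitute into the first, and collect terms.

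Concretely, from the identity for $\mub(k_1,\dots,k_n)$ one isolates
\[
\sum_{i=1}^{n-3}\frac{(-1+\phib)^i\beta_{k_1}\beta_{k_{i+1}}}{2^{i+1}}\mub(k_{i+2},\dots,k_n)+(\text{boundary terms})=\mub(k_1,\dots,k_n)-\frac{\mub(k_2,\dots,k_n)}{2}.
\]
Reindexing the sum in the first identity (splitting off its $i=1$ term, which is $\frac{(-1+\phib)\beta_{k_0}\beta_{k_1}}{4}\mub(k_2,\dots,k_n)$, and shifting $i\mapsto i-1$ in the remainder) shows that what is left is exactly $\frac{\beta_{k_0}}{\beta_{k_1}}$ times the tail combination just isolated — one has to check carefully that the two boundary terms $\frac{(-1+\phib)^{n-1}\beta_{k_0}\beta_{k_{n-1}}}{2^{n+1}}+\frac{(-1+\phib)^{n}\beta_{k_0}\beta_{k_n}}{2^{n+1}}$ match $\frac{\beta_{k_0}}{\beta_{k_1}}$ times the corresponding boundary terms of the shorter identity, which they do because the powers of $(-1+\phib)$ and $2$ shift consistently and $\beta_{k_n}\beta_{k_n}=1$ (recall $\beta_j=\pm\sin 2\theta$ so $\beta_j^2$ is the same for both $j$; actually $\beta_{k_n}^2=\sin^2 2\theta$, so the boundary terms need the relation between $\beta_{k_{n-1}}\beta_{k_n}$ — this bookkeeping is the delicate point). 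Substituting gives
\[
\mub(k_0,k_1,\dots,k_n)=\frac{\mub(k_1,\dots,k_n)}{2}+\frac{(-1+\phib)\beta_{k_0}\beta_{k_1}}{4}\mub(k_2,\dots,k_n)+\frac{\beta_{k_0}}{\beta_{k_1}}\Big(\mub(k_1,\dots,k_n)-\frac{\mub(k_2,\dots,k_n)}{2}\Big)(-1+\phib).
\]
Grouping the $\mub(k_1,\dots,k_n)$ terms yields the factor $\frac12\big(1+\frac{\beta_{k_0}}{\beta_{k_1}}(-1+\phib)\big)$, and grouping the $\mub(k_2,\dots,k_n)$ terms yields $\frac{\beta_{k_0}}{2}(-1+\phib)\big(\frac{\beta_{k_1}}{2}-\frac{1}{2\beta_{k_1}}\big)$, which is exactly the claimed formula.

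I expect the main obstacle to be the careful treatment of the two boundary (last two) terms in Theorem~\ref{medidacilindro} and the induction base: for small $n$ (say $n=1$ and $n=2$) the sum $\sum_{i=1}^{n-2}$ is empty or has one term, so the ``tail elimination'' argument degenerates and must be checked directly against Example~\ref{ex_mu2emu3} (using $\mu_{\beta,1}(k)=1/2$ and the explicit $\mu_{\beta,2},\mu_{\beta,3}$). One should also keep straight that $\beta_{k}^2=\sin^2(2\theta)$ is independent of $k$ while $\beta_{k}/\beta_{k'}=\pm1$, since the formula mixes ratios $\beta_{k_0}/\beta_{k_1}$ with products $\beta_{k_0}\beta_{k_1}$ and the algebra only closes because $\beta_{k_1}\cdot(\beta_{k_0}/\beta_{k_1})=\beta_{k_0}$ and $(1/\beta_{k_1})\cdot(\beta_{k_0}/\beta_{k_1})\cdot\beta_{k_1}^2=\beta_{k_0}$. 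Once the boundary terms are verified to telescope correctly, the rest is the routine regrouping indicated above; alternatively, since this is flagged as an appendix-style computation, one could instead prove it directly from (\ref{def_mubn}) by the same trace manipulations used for Theorem~\ref{medidacilindro}, pulling out the first two tensor factors $P_{k_0}\otimes P_{k_1}$ and the first operator $[I^{\otimes n}+(\phib-1)(\sigma^x_1\otimes\sigma^x_2)_n]$, but the derivation from Theorem~\ref{medidacilindro} is shorter and I would present that.
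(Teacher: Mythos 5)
Your proposal is correct and follows essentially the same route as the paper's appendix proof (Proposition~\ref{recursive1}): write Theorem~\ref{medidacilindro} for both $(k_0,k_1,\dots,k_n)$ and $(k_1,\dots,k_n)$, observe that the tail sum of the first is $\frac{\beta_{k_0}}{\beta_{k_1}}\cdot\frac{(-1+\phib)}{2}$ times that of the second (the paper phrases this as subtracting $\frac{2}{\beta_{k_0}}$ times the first equation from $\frac{(-1+\phib)}{\beta_{k_1}}$ times the second, which is the same elimination), and check $n=1,2$ directly. Note only that your displayed substitution drops a factor of $\tfrac12$ in front of $\frac{\beta_{k_0}}{\beta_{k_1}}(-1+\phib)\big(\mub(k_1,\dots,k_n)-\tfrac12\mub(k_2,\dots,k_n)\big)$; with that factor restored the grouping yields exactly the coefficients you state, and no identity for $\beta_{k_{n-1}}\beta_{k_n}$ is actually needed for the boundary terms to match.
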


\medskip
For the proof see Appendix (Theorem \ref{recursive1})

\medskip

\section{The continuous fraction expression for the Jacobian}

A general reference for Thermodynamic Formalism and Gibbs probabilities is \cite{PP}.
The reasoning of this section is similar to the one in section 4 in \cite{LMMM}.

Remember that $\phib = \frac{2}{e^{2\beta} + 1} \in (0,1).$

		 We denote
		$$ a(k_0,k_1) = \frac{1}{2}\bigg(1+\frac{\beta_{k_0}}{\beta_{k_1}}(-1+\phib)\bigg) $$

and

		$$ b(k_0,k_1) = \frac{\beta_{k_0}}{4}(-1+\phib) \bigg(\frac{-1}{\beta_{k_{1}}}+\beta_{k_{1}}\bigg),$$
		
		%e $$\gamma= \frac{1}{4} + \frac{1}{4}(-1\,+\,\phib)\beta_{1}^2 =  \mub(1,1) $$
		\noindent where $k_0,k_1\in \{1,2\}$, $\beta_1=\sin(2\theta)$ and $\beta_2=-\beta_1$ for $\theta \in (0,\frac{\pi}{2})$ and $\theta \ne \frac{\pi}{4}$ .
		 The possible values of  $a(k_0,k_1)$ e $b(k_0,k_1)$ are:

\medskip

		a) if $k_0=k_1$, then
		
		$$ 0 < a(k_0,k_1)=\dfrac{\phib}{2} < \frac{1}{2}$$
		
		\noindent and
		
		$$0 < b(k_0,k_1)=\frac{\beta_{k_0}}{4}(-1+\phib) \bigg(\frac{-1}{\beta_{k_{1}}}+\beta_{k_{1}}\bigg)= $$
		$$\frac{1}{4}(-1+\phib) (-1+\beta_{k_{1}}^2) = \frac{1}{4}(1-\phib) (1-\beta_{k_{1}}^2) < \frac{1}{4}.$$
		
\medskip

		\noindent b) if $k_0\neq k_1$, then
		$$ 0 < a(k_0,k_1)=1 - \dfrac{\phib}{2} < 1$$
		
		\noindent and
		
		$$-\frac{1}{4} < b(k_0,k_1)=\frac{\beta_{k_0}}{4}(-1+\phib) \bigg(\frac{-1}{\beta_{k_{1}}}+\beta_{k_{1}}\bigg)$$
		$$= -\frac{1}{4}(-1+\phib) (-1+\beta_{k_{1}}^2) = -\frac{1}{4}(1-\phib) (1-\beta_{k_{1}}^2) < 0.$$
		
		\medskip
		
		 By Proposition \ref{recursive} we get
		$$\mub (k_0 , k_{1} , ...,k_n)=a(k_0,k_1)\mub (k_1 , ...,k_{n}) +b(k_0,k_1)\mub (k_2,...,k_{n}).$$

		\begin{proposition}\label{positivaemcilindros}
			Suppose that $\theta \ne \frac{\pi}{4}$. Then, $\mub$ is positive in cylinders.
		\end{proposition}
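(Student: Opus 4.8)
The plan is to show by induction on the length of the cylinder that $\mub(k,j_1,\dots,j_n)>0$ for every admissible word, using the recurrence from Theorem \ref{recursive} together with the sign/size estimates on $a(k_0,k_1)$ and $b(k_0,k_1)$ listed above. The base cases are immediate: $\mub(k)=1/2>0$, and from Example \ref{ex_mu2emu3} one checks $\mu_{\beta,2}(a,b)=\frac{1}{2^2}[1-\beta_a\beta_b]+\frac{1}{2^2}\phib\beta_a\beta_b=\frac14[1-(1-\phib)\beta_a\beta_b]$, which is positive because $|\beta_a\beta_b|=\sin^2(2\theta)<1$ and $1-\phib\in(0,1)$; similarly $\mu_{\beta,3}(a,b,c)>0$ using $|\beta_i\beta_j|<1$ and $|-1+\phib|<1$.

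For the inductive step I would fix $n\ge 2$, assume positivity of $\mub$ on all cylinders of length $\le n$, and write
\[
\mub(k_0,k_1,\dots,k_n)=a(k_0,k_1)\,\mub(k_1,\dots,k_n)+b(k_0,k_1)\,\mub(k_2,\dots,k_n).
\]
When $k_0=k_1$ both $a$ and $b$ are strictly positive, so the right-hand side is a sum of two positive terms and we are done. The genuine issue is the case $k_0\ne k_1$, where $b(k_0,k_1)<0$; here I must show that the positive contribution $a(k_0,k_1)\mub(k_1,\dots,k_n)$ dominates the negative one. The clean way is to eliminate $\mub(k_1,\dots,k_n)$ in favor of the two length-$(n-1)$ cylinders by applying the recurrence once more to $\mub(k_1,\dots,k_n)=a(k_1,k_2)\mub(k_2,\dots,k_n)+b(k_1,k_2)\mub(k_3,\dots,k_n)$, obtaining
\[
\mub(k_0,\dots,k_n)=\bigl[a(k_0,k_1)a(k_1,k_2)+b(k_0,k_1)\bigr]\mub(k_2,\dots,k_n)+a(k_0,k_1)b(k_1,k_2)\,\mub(k_3,\dots,k_n).
\]
Alternatively, and perhaps more transparently, I would instead use Theorem \ref{medidacilindro}, which already expresses $\mub(k,j_1,\dots,j_n)$ as $\frac{1}{2}\mub(j_1,\dots,j_n)$ plus correction terms each carrying a factor $(-1+\phib)^i\beta_k\beta_{j_i}/2^{i+1}$, and bound the sum of absolute values of the corrections by a geometric series: since $|(-1+\phib)\beta_k\beta_{j_i}|<1-\phib<1$ and each $\mub(j_{i+1},\dots,j_n)\le\mub(j_{i+1})=1/2$ (a consequence of the marginal identities in Theorem \ref{def_mub}), the corrections are dominated by $\frac12\mub(j_1,\dots,j_n)$ provided one tracks the constants carefully.

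The main obstacle is exactly this domination estimate in the $k_0\ne k_1$ case: one needs a quantitative lower bound on $\mub(k_1,\dots,k_n)$ (or more precisely a bound showing it cannot be too small relative to $\mub(k_2,\dots,k_n)$) that survives the induction, since a naive bound $\mub(\text{length }n)\ge c^n$ with $c<1$ would be swamped by the coefficients. I expect the resolution to come from the fact that the "bad" coefficient satisfies $a(k_0,k_1)=1-\phib/2$ while $|b(k_0,k_1)|=\frac14(1-\phib)(1-\beta_{k_1}^2)$, and $1-\beta_{k_1}^2=\cos^2(2\theta)$ is a fixed constant in $(0,1)$ bounded away from $1$ by Assumption A ($\theta\ne\pi/4$); combined with the observation that $a$ and $b$ with equal indices are both positive, one should be able to close the induction by proving a slightly stronger statement, e.g. that $\mub(k_1,\dots,k_n)\ge \kappa\,\mub(k_2,\dots,k_n)$ for an explicit $\kappa>0$ depending only on $\phib$ and $\theta$. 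I would carry out the two-step substitution above, verify that the coefficient $a(k_0,k_1)a(k_1,k_2)+b(k_0,k_1)$ is positive for all index combinations using the case analysis for $a,b$, and feed that back into the induction hypothesis; the positivity of this combined coefficient is a short finite check and is where the hypothesis $\theta\ne\pi/4$ is genuinely used.
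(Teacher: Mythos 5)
You correctly isolate the real difficulty (the case $k_0\ne k_1$, where $b(k_0,k_1)<0$), but neither of your two routes closes it, and the first one rests on a claim that is actually false. After the two-step substitution the coefficient of $\mub(k_2,\dots,k_n)$ in the sub-case $k_0\ne k_1=k_2$ is $a(k_0,k_1)a(k_1,k_2)+b(k_0,k_1)=(1-\tfrac{\phib}{2})\tfrac{\phib}{2}-\tfrac14(1-\phib)\cos^2(2\theta)$, which is \emph{negative} whenever $\phib$ is small (large $\beta$): e.g.\ for $\phib=0.01$ and $\cos^2(2\theta)=0.9$ it equals $0.004975-0.22275<0$. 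So the ``short finite check'' of positivity of the combined coefficients fails. The alternative route via Theorem \ref{medidacilindro} is also not closed: the $i$-th correction carries $\bigl((1-\phib)/2\bigr)^i\mub(j_{i+1},\dots,j_n)$, and since shorter cylinders have \emph{larger} measure, $\mub(j_{i+1},\dots,j_n)/\mub(j_1,\dots,j_n)$ can grow roughly like $(\min J)^{-i}$ with $\min J$ of order $\tfrac14(1-\phib)\cos^2(2\theta)<(1-\phib)/2$, so the geometric factor does not dominate; you acknowledge this (``provided one tracks the constants carefully'', ``one should be able to close the induction'') but the needed uniform ratio bound $\mub(k_1,\dots,k_n)\ge\kappa\,\mub(k_2,\dots,k_n)$ is precisely what is left unproved.

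The paper avoids the mixed-index case entirely by a different trick. Since $\mubn$ is by construction the evaluation of a state on the positive operator $P_{j_1}\otimes\cdots\otimes P_{j_n}$, all cylinder measures are a priori \emph{nonnegative}; only strict positivity is at stake. By $\sigma$-invariance,
\[
\mub(x_0,\dots,x_n)=\mub(1,x_0,\dots,x_n)+\mub(2,x_0,\dots,x_n)\ \ge\ \mub(x_0,x_0,x_1,\dots,x_n),
\]
dropping the nonnegative term whose prepended symbol differs from $x_0$. The remaining term is expanded with \emph{equal} leading indices, $\mub(x_0,x_0,\dots,x_n)=a(x_0,x_0)\mub(x_0,\dots,x_n)+b(x_0,x_0)\mub(x_1,\dots,x_n)$, where both coefficients are strictly positive (here $b(x_0,x_0)=\tfrac14(1-\phib)\cos^2(2\theta)>0$ is exactly where $\theta\ne\pi/4$ enters), the first summand is $\ge 0$ by a priori nonnegativity, and the second is $>0$ by the induction hypothesis on the length-$n$ cylinder. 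You would need to either adopt this kind of argument or actually establish the quantitative ratio bound you postulate; as written, the proposal has a genuine gap.
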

		
		\begin{proof}
	The proof follows the arguments in \cite{LMMM}.	For cylinders of size 1, 2 and 3 the expression can be directly checked. We conclude the proof using induction.

Suppose $\mub$ is positive for any cylinder set of size smaller or equal to $n$. As $\mub$ is $\sigma$-invariant, we get
			$$ \mub(x_0,x_1,...,x_n) = \mub(1,x_0,...,x_n) + \mub(2,x_0,...,x_n) \geq $$
$$\mub(x_0,x_0,x_1,...,x_n)= a(x_0,x_0)\mub (x_0 , ...,x_{n}) +b(x_0,x_0)\mub (x_1,...,x_{n}) > 0.$$
			
		\end{proof}
		
		\medskip
			
		 Now we will get a result on the Jacobian $J=J_{\mu_\beta}$ (see section 9.7 of \cite{Viana}) of the probability $\mub$ in a similar fashion as in \cite{LMMM}.

Define for $x=(x_0,x_1,...)\in\Omega$,
		
		$$J^n(x) := \dfrac{\mub(x_0,...,x_n)}{\mub(x_1,...,x_n)}$$
		and
		$$ J_{\mub}(x) = J(x) := \lim\limits_{n\to\infty} J^n(x) = \lim_{n\to\infty} \dfrac{\mub(x_0,...,x_n)}{\mub(x_1,...,x_n)}  $$
		in the case the limit exists. It is known that when  $\nu$ is a $\sigma$-invariant  probability the  Jacobian $J_{\nu}$ is well defined \textit{for almost everywhere point} $x$ (see \cite{Pa}).
			
		 From proposition \ref{recursive} we get
		
		\begin{equation}\label{eq_seq_jacob}
		\dfrac{\mub (k_0 , k_{1} , ...,k_n)}{\mub (k_1 , ...,k_{n})}=a(k_0,k_1)\,+\,b(k_0,k_1)\dfrac{\mub (k_2,...,k_{n})}{\mub (k_1 , ...,k_{n})}.
		\end{equation}
				From this expression we get the following important consequence:
\medskip

		\begin{corollary}\label{rec_do_jacobiano} For all $n\geq 1$ we get
			$$ J^n(x_0,x_1,...) =  a(x_0,x_1)\,+\,b(x_0,x_1)\dfrac{1}{J^{n-1}(x_1,x_2,...)}.$$
			Moreover, taking the limit $n\to\infty$ in (\ref{eq_seq_jacob}), we obtain
			$$J(k_0,k_1,...) = a(k_0,k_1)\,+\,b(k_0,k_1) \dfrac{1}{J(k_1,k_2,...)}.$$
		\end{corollary}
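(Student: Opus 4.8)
The first identity is essentially a bookkeeping rewrite of equation (\ref{eq_seq_jacob}), so the plan is to make the index matching explicit. By Proposition \ref{positivaemcilindros} every cylinder has positive $\mub$-measure, hence all the quotients below are well defined. Fix $n\ge 1$ and $x=(x_0,x_1,\dots)$. By definition $J^n(x)=\mub(x_0,\dots,x_n)/\mub(x_1,\dots,x_n)$, which depends only on $(x_0,\dots,x_n)$; dividing the recursion of Theorem \ref{recursive} (equivalently (\ref{eq_seq_jacob})) by $\mub(x_1,\dots,x_n)$ gives
$$J^n(x)=a(x_0,x_1)+b(x_0,x_1)\,\frac{\mub(x_2,\dots,x_n)}{\mub(x_1,\dots,x_n)}.$$
Now observe that $\mub(x_1,\dots,x_n)$ is the numerator and $\mub(x_2,\dots,x_n)$ the denominator in the definition of $J^{n-1}$ evaluated at the shifted point $(x_1,x_2,\dots)$, that is $J^{n-1}(x_1,x_2,\dots)=\mub(x_1,\dots,x_n)/\mub(x_2,\dots,x_n)$, so the trailing quotient equals $1/J^{n-1}(x_1,x_2,\dots)$. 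This yields the claimed formula for every $n\ge 1$; the base case $n=1$ is the same computation with the convention $J^{0}\equiv \mub(x_1)=1/2$ coming from Theorem \ref{recursive} with the empty cylinder having measure $1$.

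For the second identity I would pass to the limit $n\to\infty$ in the recursion just obtained. The only point requiring care is that one must know that $\lim_{n\to\infty}J^{n-1}(x_1,x_2,\dots)$ exists and is bounded away from $0$, so that $1/J^{n-1}(x_1,x_2,\dots)\to 1/J(x_1,x_2,\dots)$ and the right-hand side converges. This is exactly where the uniform bounds on $a(k_0,k_1)$ and $b(k_0,k_1)$ established just before the corollary come in: iterating $J^n(x)=a(x_0,x_1)+b(x_0,x_1)/J^{n-1}(\sigma x)$ expresses $J^n(x)$ as a finite truncation of a continued fraction whose partial coefficients lie in fixed compact intervals, and the map $t\mapsto a+b/t$ is a uniform contraction on the relevant range of values; hence the truncations form a Cauchy sequence converging to a positive limit $J(x)$, uniformly in $x$. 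Granting this (which is the substance of Lemma \ref{tri}), letting $n\to\infty$ in the displayed recursion gives $J(k_0,k_1,\dots)=a(k_0,k_1)+b(k_0,k_1)/J(k_1,k_2,\dots)$.

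The main obstacle is therefore not the algebraic step but justifying convergence of the continued fraction and the strict positivity of its value; everything else is a direct substitution of the definitions of $J^n$ and $J^{n-1}$ into (\ref{eq_seq_jacob}), using only $\sigma$-invariance and positivity on cylinders. I would either prove the convergence here via the contraction argument sketched above, or state it as a consequence of Lemma \ref{tri} and defer the details there.
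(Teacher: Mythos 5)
Your proof is correct and follows essentially the same route as the paper: the first identity is the index-matching rewrite of (\ref{eq_seq_jacob}), and the second is obtained by passing to the limit, with convergence and positivity of $J$ deferred to Lemma \ref{tri} and the estimates of Remark \ref{obs_jacob_n_lema}. Your extra care about why $1/J^{n-1}(x_1,x_2,\dots)\to 1/J(x_1,x_2,\dots)$ is a welcome precision that the paper leaves implicit.
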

			
		\begin{remark}\label{obs_jacob_n_lema}	

Note that adapting the argument of Proposition  \ref{positivaemcilindros}, we get
						$$J^n(x_0,x_1,...) \ge b(x_0,x_0) > 0. $$
						 Moreover, note that $J^n(1,x_1,...)+J^n(2,x_2,...) = 1$. Therefore, we obtain
			
			$$ b(x_0,x_0) \le J^n(x_0,x_1,...) \le 1 - b(x_0,x_0).$$
			
			 Finally, as $J(x) = \lim\limits_{n\to\infty} J^n(x)$ (if the limit exists) we get the estimate
			
			$$ \frac{1}{4}(1-\phib) (1-\beta_{x_{0}}^2) \le J(x) \le 1 - \frac{1}{4}(1-\phib) (1-\beta_{x_{0}}^2). $$
		\end{remark}
	
		\medskip
		
		\begin{lemma} \label{tri} In the case of convergence of $J$ we get
			$$
			J(k_0,k_1,...) = \lim_n \dfrac{\mub(k_0,...,k_n)}{\mub(k_1,...,k_n)} =$$
			$$ \lim_{n\to \infty} \left[ a(k_0,k_1)\,+\,b(k_0,k_1)\dfrac{1}{a(k_1,k_2)\,+\,b(k_1,k_2) \dfrac{1}{...a(k_{n-1},k_n) + b(k_{n-1},k_n)\frac{1}{1/2}}} \right]
			$$			
		\end{lemma}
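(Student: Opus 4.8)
The plan is to iterate the recurrence from Corollary \ref{rec_do_jacobiano} exactly $n-1$ times and then evaluate the innermost term explicitly. Recall that Corollary \ref{rec_do_jacobiano} gives, for every $m\ge 1$,
\[
J^m(y_0,y_1,\dots) = a(y_0,y_1) + b(y_0,y_1)\,\frac{1}{J^{m-1}(y_1,y_2,\dots)}.
\]
Applying this identity repeatedly, starting from $J^n(k_0,k_1,\dots)$ and peeling off one symbol at a time, we descend to $J^1(k_{n-1},k_n,\dots)$, and then to $J^0$. By definition $J^m(x)=\mub(x_0,\dots,x_m)/\mub(x_1,\dots,x_m)$; for $m=0$ this reads $J^0(x)=\mub(x_0)/(\text{empty cylinder})$. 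In the normalization used in the paper, $\mub$ of the full space is $1$ and $\mu_{\beta,1}(k)=1/2$, so the bottom of the recursion contributes the factor $\tfrac{1}{1/2}=2$ appearing under the last fraction bar. This produces the finite continued fraction
\[
J^n(k_0,k_1,\dots) = a(k_0,k_1)+b(k_0,k_1)\cfrac{1}{a(k_1,k_2)+b(k_1,k_2)\cfrac{1}{\ddots\, a(k_{n-1},k_n)+b(k_{n-1},k_n)\frac{1}{1/2}}}.
\]

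First I would make the peeling argument precise: I would prove by a short induction on the depth $j$ (from $j=0$ up to $j=n-1$) that the partial continued fraction of depth $j$ built from $a(k_0,k_1),\dots,a(k_{j-1},k_j)$ and the tail value $J^{n-j}(k_j,k_{j+1},\dots)$ equals $J^n(k_0,k_1,\dots)$. The base case $j=0$ is trivial, and the inductive step is exactly one application of Corollary \ref{rec_do_jacobiano} to the tail term $J^{n-j}(k_j,\dots)=a(k_j,k_{j+1})+b(k_j,k_{j+1})/J^{n-j-1}(k_{j+1},\dots)$, substituted into the innermost slot of the depth-$j$ fraction. When $j=n-1$ the tail is $J^1(k_{n-1},k_n,\dots)=a(k_{n-1},k_n)+b(k_{n-1},k_n)\,\big/\,J^0(k_n,k_{n+1},\dots)$, and $J^0=\mub(k_n)/1=1/2$, which is where the $\tfrac{1}{1/2}$ term comes from. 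Since all the denominators $J^m$ are bounded below by $b(x_0,x_0)>0$ (Remark \ref{obs_jacob_n_lema}), none of the intermediate fractions is degenerate, so every step is legitimate.

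Having established the finite-depth identity for each $n$, the stated formula for $J(k_0,k_1,\dots)$ follows by taking $n\to\infty$: on the left, $J^n(x)\to J(x)$ by hypothesis (we are in the case of convergence), and on the right the displayed continued fraction is by construction the image of $J^n$ under the depth-$n$ unfolding, so the two limits coincide. This also matches the first displayed equality $J(k_0,k_1,\dots)=\lim_n \mub(k_0,\dots,k_n)/\mub(k_1,\dots,k_n)$, which is just the definition of $J$ together with $J^n=\mub(k_0,\dots,k_n)/\mub(k_1,\dots,k_n)$ coming from equation (\ref{eq_seq_jacob}) and Theorem \ref{recursive}.

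The only genuinely delicate point is bookkeeping at the bottom of the recursion: one must track that the recursion (\ref{eq_seq_jacob}), derived from Theorem \ref{recursive} for cylinders of length $\ge 2$, terminates correctly when the cylinder shrinks to length one, and that the convention $\mub(\varnothing)=1$, $\mu_{\beta,1}(k)=1/2$ is used consistently — this is precisely what yields the terminal $\tfrac{1}{1/2}$ rather than, say, a $1$. Everything else is a routine induction, and no new estimate beyond the positivity already recorded in Remark \ref{obs_jacob_n_lema} is needed.
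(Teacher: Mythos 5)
Your proposal is correct and follows essentially the same route as the paper: iterate the recurrence of Corollary \ref{rec_do_jacobiano} down to the base case $J^1(k_{n-1},k_n,\dots)=a(k_{n-1},k_n)+b(k_{n-1},k_n)\frac{1}{1/2}$ (coming from $\mu_{\beta,1}(k)=1/2$) and then let $n\to\infty$. Your version merely spells out the induction on the depth of the unfolding and the bottom-of-recursion bookkeeping, which the paper leaves implicit.
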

		\begin{proof}
			From Corollary \ref{rec_do_jacobiano} it follows
			
			$$ J^n(x_0,x_1,...) =  a(x_0,x_1)\,+\,b(x_0,x_1)\dfrac{1}{J^{n-1}(x_1,x_2,...)}.$$
			
			Note that from Proposition $\ref{recursive}$, we get
			
			$$J^1(x_{n-1},x_n,...) = a(x_{n-1},x_n) + b(x_{n-1},x_n)\frac{1}{1/2}.$$
		
			As $J(x) = \lim\limits_{n\to\infty} J^n(x)$ the Lemma is proved.
		\end{proof}
		
The expression for $J(k_0,k_1,...)$ given by Lemma \ref{tri} is a continuous fraction expansion (when converges).
A general reference for continuous fraction expansions is \cite{Wall}. In the next section we will prove that
$J$ is well defined for any element on $\{1,2\}^\mathbb{N}$ and it is a H\"older continuous function. Therefore, we will show that $\mu_\beta$ is a Gibbs probability (see \cite{PP}). As a consequence $\mu_\beta $ is mixing (see \cite{PP} or \cite{Viana}).

\medskip

\section{$\mu_\beta$  is a H\"older Gibbs probability} \label{Ho}

In the reasoning of this section the continuous fraction expansion expression presented in  Lemma \ref{tri} will be of fundamental importance. We will show that $J: \{1,2\}^\mathbb{N} \to \mathbb{R}$ is H\"older continuous.

We will show that there exists a critical parameter $\beta_c =\frac{1}{2}  \log (\frac{4}{ \cos^2 (2\,\theta)} - 1)$, such that, for $\beta < \beta_c$ the set of values of the Jacobian is a regular Cantor set and for $\beta\geq \beta_c$
this set is an interval.

\medskip

We denote in this section %$ \Phi = \frac{2}{e^{2\beta} + 1} \in (0,1)$,
$$ \alpha= a(k_0,k_0)=\frac{ \Phi_\beta}{2}= \frac{1}{e^{2\, \beta}+1}$$
 \text{and}
$$\gamma= b(k_0,k_0)=\frac{1}{4}\, (1-  \Phi_\beta) \cos^2 (2 \, \theta). $$
In this case $\alpha$ and $ \gamma$ are such that $0< \alpha<1/2,$ and $0< \gamma<1/4$. Furthermore, since $4\gamma<1-\Phi_\beta=1-2\alpha$, $4 \gamma + 2 \alpha<1$ and so $(1-\alpha)^2 -4\gamma >0$.

We want to identify $J(k_0,k_1,k_2,...)$ as the  limit
$$ \lim_{n\to \infty} \left[ a(k_0,k_1)\,+\,b(k_0,k_1)\dfrac{1}{a(k_1,k_2)\,+\,b(k_1,k_2) \dfrac{1}{...a(k_{n-1},k_n) + b(k_{n-1},k_n)\frac{1}{1/2}}} \right].
			$$	

Note that if $k_0=k_1$ we get that
\begin{equation} \label{li1}  J(k_0,k_1,k_2,...)  = \alpha + \frac{\gamma}{J(k_1,k_2,k_3,...)} ,\end{equation}
and in the case $k_0\neq k_1$ we get that
\begin{equation} \label{li2} J(k_0,k_1,k_2,...)  = 1-  \alpha - \frac{\gamma}{J(k_1,k_2,k_3,...)} .\end{equation}

The results of this sections are adapted from the formalism used on section \ref{zero} where
more details are presented about the interplay of the symbolic string  $(k_0,k_1,k_2,k_3,...)$ and the
limit value $J$.

In order to estimate the value of the fraction expansion of  $J(k_0,k_1,k_2,...)$ we will have to consider the two functions
$$ f_0(x) = \alpha + \frac{\gamma}{x}$$
and
$$ f_1(x) = 1- \alpha - \frac{\gamma}{x}.$$

It follows from Lemma \ref{tri} that the  values of the Jacobian $J(k_0,k_1,...,k_r,..)$ are the possible limits of iterations of the form
\begin{equation} \label{dada} f^{n_l}_{u_l} \circ f^{m_l}_{v_l} \circ f^{n_{l-1}}_{u_{l-1}} \circ f^{m_{l-1}}_{v_{l-1}} \circ ... \circ f_{u_1}^{n_1} \circ f_{v_1}^{m_1}\, (1/2),
\end{equation}
where $u_j,v_j\in\{1,2\}$, $j=1,...,l$,  and $l \to \infty.$ The values $m_j$ and $n_j$ will depend of the successive  changes (or, not) from $k_h$ to $k_{h+1}$, $h \in \mathbb{N},$ on the
string $ (k_0,k_1,k_2,...)$ (according to (\ref{li1}) and (\ref{li2})).

\medskip

Note that  $f_1$ is associated to changing symbols on the string and $f_0$ to not changing symbols. For example,
for $k=(1,2,2,1,2,2,1,....)$ we get
$$J^6 (k) =      (\,    f_1 \circ f_0 \circ f_1 \circ  f_1    \circ f_0\,)\, (1/2)$$
and
$$J^7 (k) =          (\,f_1 \circ f_0 \circ f_1 \circ  f_1    \circ f_0 \circ f_1 )\,(1/2).$$

We denote by $r= \frac{\alpha + \sqrt{\alpha^2 + 4 \gamma  }}{2} $ the positive fixed point of $f_0$. This fixed point is contracting. The function $f_1$ has two positive fixed points: $\tilde{R}= \frac{    1-\alpha - \sqrt{(\alpha-1)^2 - 4 \gamma  }}{2}$ (which is expansive) and $R=\frac{    1-\alpha + \sqrt{(\alpha-1)^2 - 4 \gamma  }}{2}$ (which is contractive).

We claim that
$$ 0<\tilde R<1-R<r<1/2<R<1-\alpha.$$
Indeed, $\tilde R+R=1-\alpha$, where $R<1-\alpha$ and $\tilde R=1-\alpha-R<1-R$. We have $x^2-\alpha x-\gamma<0$ for $0\le x<r$ and $x^2-\alpha x-\gamma>0$ for $x>r$. Since $(1/2)^2-\alpha\cdot (1/2)-\gamma=(1-2\alpha-4\gamma)/4>0$, it follows that $1/2>r$. On the other hand, $x^2-(1-\alpha)x+\gamma<0 \iff \tilde R<x<R$, and, since $(1/2)^2-(1-\alpha)\cdot (1/2)+\gamma=(-1+2\alpha+4\gamma)/4<0$, we have $1/2<R$. Finally, $r=f_0(r)>f_0(R)=\alpha+\dfrac{\gamma}{R}=\alpha+(1-\alpha-R)=1-R$ (since $f_0$ inverts orientation in $(0,+\infty)$.

\begin{lemma} $f_0$ and $f_1$ are contractions on the interval $I=[1-R,R]$ for some metric $\tilde{d}$.

\end{lemma}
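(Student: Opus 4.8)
The plan is to first locate the images $f_0(I)$ and $f_1(I)$ inside $I=[1-R,R]$, and then to produce $\tilde d$ as a Hilbert (cross-ratio) metric on a slightly larger \emph{open} interval $\hat I\supset I$, on which both $f_0$ and $f_1$ act by real projective (M\"obius) transformations carrying $\hat I$ into a relatively compact subinterval; for such an interval and metric those maps are automatically strict contractions.

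First I would check $f_0(I)\subseteq I$ and $f_1(I)\subseteq I$. On $(0,\infty)$ we have $f_0'(x)=-\gamma/x^2<0$ and $f_1'(x)=\gamma/x^2>0$, so $f_0$ reverses and $f_1$ preserves orientation, whence $f_0(I)=[f_0(R),f_0(1-R)]$ and $f_1(I)=[f_1(1-R),f_1(R)]$. Using $\gamma=R\tilde R$ and $\tilde R=1-\alpha-R$ one gets $f_0(R)=\alpha+\gamma/R=\alpha+(1-\alpha-R)=1-R$ and $f_1(R)=1-\alpha-\gamma/R=R$, so the containments reduce to the two inequalities $f_0(1-R)<R$ and $f_1(1-R)>1-R$, both equivalent to $(R-\alpha)(1-R)>\gamma$; and since $\gamma=R(1-\alpha-R)$, a one-line computation gives $(R-\alpha)(1-R)-\gamma=\alpha(2R-1)>0$ because $R>1/2$ (from the chain $\tilde R<1-R<r<1/2<R$ established just before the Lemma). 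Thus $f_0(I)=[1-R,f_0(1-R)]$ and $f_1(I)=[f_1(1-R),R]$ are subintervals of $I$, each meeting $\partial I$ at exactly one endpoint.

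Next I would enlarge $I$. Note $f_0,f_1$ extend to real M\"obius maps whose only pole is $0\notin I$, and the contact with $\partial I$ happens at $1-R$ (which is $f_0(R)$) and at $R$ (fixed by $f_1$). For $\epsilon>0$ small set $\hat I:=(1-R-\epsilon,\,R+\epsilon)$, so $I\subset\hat I\subset(0,\infty)$. Then $\overline{f_0(\hat I)}\subset\hat I$ and $\overline{f_1(\hat I)}\subset\hat I$: indeed $f_0(R+\epsilon)>1-R-\epsilon$ since $f_0(R)-f_0(R+\epsilon)\le\epsilon\,|f_0'(R)|=\epsilon\gamma/R^2=\epsilon\,\tilde R/R<\epsilon$ (using $\tilde R<R$), and $f_1(R+\epsilon)<R+\epsilon$ since $f_1'(R)=\gamma/R^2=\tilde R/R<1$; the remaining estimates $f_0(1-R-\epsilon)<R+\epsilon$ and $f_1(1-R-\epsilon)>1-R-\epsilon$ hold for small $\epsilon$ by continuity, because $f_0(1-R)<R$ and $f_1(1-R)>1-R$ are strict. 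Finally I would take $\tilde d$ to be the Hilbert metric of $\hat I$ restricted to $I$: writing $p=1-R-\epsilon$, $q=R+\epsilon$, $\tilde d(x,y)=\bigl|\log\frac{(x-p)(q-y)}{(q-x)(y-p)}\bigr|$, which on the compact set $I$ is bi-Lipschitz equivalent to the Euclidean metric, so the topology is unchanged. Each $f_i$ maps $\hat I$ bijectively (by a M\"obius transformation) onto $f_i(\hat I)$, hence acts as an isometry $(\hat I,\tilde d)\to(f_i(\hat I),d_{f_i(\hat I)})$; and since $f_i(\hat I)$ is relatively compact in $\hat I$, the classical contraction property of the Hilbert metric gives a constant $\lambda_i<1$ (controlled by the $\tilde d$-diameter of $f_i(\hat I)$) with $\tilde d\le\lambda_i\,d_{f_i(\hat I)}$ on $f_i(\hat I)$. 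Combining, $\tilde d(f_i(x),f_i(y))\le\lambda_i\,d_{f_i(\hat I)}(f_i(x),f_i(y))=\lambda_i\,\tilde d(x,y)$ for $x,y\in\hat I$; taking $\lambda:=\max(\lambda_0,\lambda_1)<1$ and using $f_i(I)\subseteq I$, this shows $f_0$ and $f_1$ are $\lambda$-contractions of $(I,\tilde d)$.

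The main obstacle is the enlargement step: the images $f_0(I)$ and $f_1(I)$ genuinely touch $\partial I$, and $|f_i'|$ can exceed $1$ near $x=1-R$ when $\alpha$ is small, so neither the Euclidean metric nor the Hilbert metric of $I$ itself works directly; one must verify that a mild dilation $I\to\hat I$ makes both images relatively compact without admitting the pole at $0$, and this is exactly what the inequalities $\tilde R<1-R<r<1/2<R$ (together with $f_0(R)=1-R$, $f_1(R)=R$) guarantee. Everything else is either a direct M\"obius computation or an appeal to the standard contraction theory of the Hilbert/cross-ratio metric.
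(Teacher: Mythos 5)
Your proposal is correct and follows essentially the same route as the paper: verify $f_i(I)\subseteq I$, enlarge to $\hat I=(1-R-\epsilon,\,R+\epsilon)$ with $\overline{f_i(\hat I)}\subset\hat I$, and use a projectively natural metric on $\hat I$ that any M\"obius map sending $\hat I$ strictly into itself must contract. The only real difference is that you invoke the classical Hilbert/cross-ratio (Birkhoff) contraction theorem as a black box, whereas the paper proves that contraction property from scratch by conjugating $\hat I$ to $(-1,1)$ and computing with the Poincar\'e metric $\rho(x)\,dx=dx/(1-x^2)$ --- which is, up to a constant factor, exactly your cross-ratio metric.
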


{\bf Proof:}
We claim that the interval  $I=[1-R,R]$ (note that $1/2$ belongs to $I$) is such that $f_j(I)\subset I$, for $j=0, 1$.

\medskip

First we consider $f_0$.
Note that $f_0(R)=\alpha+\dfrac{\gamma}{R}=\alpha+(1-\alpha-R)=1-R$.
We claim that $f_0(1- R)  < R.$
Indeed, this means $ \alpha - \alpha R + \gamma <R- R^2$, which is equivalent to $R^2 + \alpha - \alpha R + \gamma-R<0$.
As $R^2=R- \alpha R - \gamma $ (because $R$ is a fixed point of $f_1$), the above one is equivalent to $\alpha - 2\alpha R <0$ which is satisfied because $\alpha >0$ and $1/2<R$.

Now we study $f_1$. Note that $f_1(I)=[f_1(1-R),f_1(R)]=[f_1(1-R),R]$. As $f_1$ is concave, and $(\tilde R, \tilde R), (R, R)$ are points of the graph,  we have $f_1(x)>x$, for $\tilde R<x<R$. From this we get that $f_1(1-R)>1-R$.

 We will have to consider an IFS of the form (\ref{dada}). Then, it will be necessary that $f_0 ( f_1 (1-R))> 1- R$.
This, follows from the fact that $f_0$ is monotonous decreasing and $f_1(1-R)<R.$
From the iteration dynamics (\ref{dada}) we get that the possible values of $J$ will be on the interval
$[1-R,R].$

Is not true that the modulus of the derivatives  $|f_0 '|$ and $|f_1 '|$ are always smaller than $1$ on the interval $[1-R,R]$, but we claim that $f_0$ and $f_1$ are contractions for a  distance obtained from a certain differentiable Riemannian metric on the interval $I$.

The above claim implies that there exists a  natural number
$n$, such that, compositions of $n$ times (using the  functions $f_0$ and $f_1$ in any way) are strong contractions (on the usual metric) and the sequence
$(x_n)$, $n \in \mathbb{N},$  given by $x_n=\frac{\mu(k_0,k_1,k_2,...,k_n)}{\mu(k_1,k_2,...,k_n)}$ exponentially converges (to $J(k_0,k_1,k_2,\dots)$), for any choice of $k_0, k_1, k_2,\dots$. Moreover,  $J$
will be a H\"older function of $k_0,k_1,k_2,\dots$.

In order to show the claim, note that
for small $\epsilon>0$ the interval $J=(1-R-\epsilon,R+\epsilon)$ is such that the interval $f_j(J)$ is strictly contained in $J$, for $j=0, 1$.

Indeed, $|f_0'(x)|=\gamma/x^2<1$, for $x\ge r$ (because $\sqrt{\gamma}<r$, which follows from $(\sqrt{\gamma})^2-\alpha\cdot \sqrt{\gamma}-\gamma=-\alpha\cdot \sqrt{\gamma}<0$). Therefore,  $1-R-f_0(R+\epsilon)=|f_0(R+\epsilon)-f_0(R)|<\epsilon$, that is, $f_0(R+\epsilon)>1-R-\epsilon$. Moreover, as $f_0(1-R)<R$, if $\epsilon$ is small enough we get $f_0(1-R-\epsilon)<R<R+\epsilon$.

On the other hand, as $f_1(x)>x$, for $\tilde R<x<R$ and $1-R>\tilde R$, we get for $\epsilon$ small enough $1-R-\epsilon>\tilde R$, and then, $f_1(1-R-\epsilon)>1-R-\epsilon$. Finally note that  $f_1(x)<x$, for $x>R$, and from this follows $f_1(R+\epsilon)<R+\epsilon$.

The proof of the claim that $f_0$ and $f_1$ are contractions on some metric follows from the fact that given and interval $J$, there exists a differentiable metric in $J$ which is contracted by any M\"obius transformation (of the form $M(x)=\frac{ax+b}{cx+d}$) which maps $J$ in an interval strictly contained in  $J$.

Considering a  conjugation which takes $J$ exactly on the interval $(-1,1)$, we get that, it is enough to prove this result for the interval $(-1,1).$

We will show that  any M\"obius transformation $M(x)=\frac{ax+b}{cx+d}$ which takes $(-1,1)$ strictly inside $(-1,1)$ strictly  contracts the metric $\rho(x)dx$, where $\rho(x)=\frac1{1-x^2}$.
This means $|M'(x)\rho(M(x))|<|\rho(x)|$, for all, $x\in(0,1)$ (we note that this metric $\rho(x)dx$ is the restriction of the Poincar\'e hyperbolic metric on the unit disk to the interval $(-1,1)$; we refer the reader to sections 3.3 and 3.4 in \cite{Bea} for  general results on the action of  M\"obius transformation on the hyperbolic metric on the disk).

In order to show that we point out that for any
$a\in (-1,1)$, the M\"obius transformation ${\cal M}_a(x)=\frac{x-a}{1-ax}$ is a diffeomorphism from $(-1,1)$ to $(-1,1)$ which maps $a$ to $0$ and $0$ to $-a$. Moreover, is a preserving orientation isometry  for this metric.
Indeed,
$${\cal M}_a'(x)=\frac{1-a^2}{(1-ax)^2}=\frac{1-{\cal M}_a(x)^2}{1-x^2}=\frac{\rho(x)}{\rho({\cal M}_a(x))}.$$

If $M(x)=\frac{ax+b}{cx+d}$ takes $(-1,1)$ in an interval strictly contained in  $(-1,1)$,
the same  happens for the M\"obius transformation
$$\tilde M(x)={\cal M}_{M(x)}\circ M\circ{\cal M}_{-x}$$ (where $x \in (-1,1)$).

We have
$\tilde M(0)=0$ and
$$\tilde M'(0)={\cal M}'_{M(x)}(M(x))\cdot M'(x)\cdot{\cal M}'_{-x}(0)=$$
$$\frac{\rho(M(x))}{\rho(0)}\cdot M'(x)\cdot\frac{\rho(0)}{\rho(x)}=\frac{\rho(M(x))}{\rho(x)}M'(x),$$
therefore, all we have to show is  $|\tilde M'(0)|<1$.

Note that  $\tilde M$ is a M\"obius transformation that  maps $(-1,1)$ in an interval strictly contained  in $(-1,1)$ and satisfies $\tilde M(0)=0$. From this we get
necessarily $\tilde M(x)=\frac{x}{sx+t}$, where $|t+s|\ge 1$ e $|t-s|\ge 1$, and, moreover, at least one of the inequalities is strict. This implies that $|t|>1$ and $|s|\le |t|-1$. From this we get that $|M'(0)|=|1/t|<1$ and this shows the main claim.

In this way there exists $0<\lambda<1$ such that  $f_0$ and $f_1$ contract distances by a factor $\lambda$  for some distance $\tilde{d}(x,y)$ (induced by the Riemannian metric) on the metric space $[R-1,R]$.

\qed

\medskip

\begin{corollary} The Jacobian $J: \{1,2\}^\mathbb{N} \to (0,1)$ is a H\"older continuous function.

\end{corollary}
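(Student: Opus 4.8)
The plan is to read off the finite-level Jacobians $J^n$ as orbits of the point $1/2$ under the two-map iterated function system $\{f_0,f_1\}$, and then to convert the contraction established in the Lemma above into an exponential bound on $|J(x)-J(y)|$ in terms of the number of coordinates on which $x$ and $y$ agree. First I would pin down the dictionary between a symbolic string and its composition of maps: combining Corollary \ref{rec_do_jacobiano} with $(\ref{li1})$ and $(\ref{li2})$, together with the base identity $J^1(k,l)=a(k,l)+2\,b(k,l)$ (which equals $f_0(1/2)$ when $k=l$ and $f_1(1/2)$ when $k\neq l$), one gets, for every $x=(x_0,x_1,\dots)$ and every $n\ge 1$,
$$J^n(x)=f_{\epsilon_0}\circ f_{\epsilon_1}\circ\cdots\circ f_{\epsilon_{n-1}}(1/2),$$
where $\epsilon_j=0$ when $x_j=x_{j+1}$ and $\epsilon_j=1$ when $x_j\neq x_{j+1}$; this value depends only on $x_0,\dots,x_n$. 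Since $1/2\in I=[1-R,R]$ and $f_0(I),f_1(I)\subset I$, every intermediate point of such a composition lies in $I$, so $J^n(x)\in I\subset(0,1)$.

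Next I would establish that $J(x)=\lim_n J^n(x)$ exists for \emph{every} $x$ (not merely almost every $x$ as in \cite{Pa}). From $J^{n+1}(x)=f_{\epsilon_0}\circ\cdots\circ f_{\epsilon_{n-1}}\big(f_{\epsilon_n}(1/2)\big)$ and the fact, proved in the Lemma above, that $f_0$ and $f_1$ each contract the distance $\tilde d$ by a factor $\lambda<1$, we obtain $\tilde d\big(J^{n+1}(x),J^n(x)\big)\le\lambda^{n}\,\mathrm{diam}_{\tilde d}(I)$, so $(J^n(x))_n$ is $\tilde d$-Cauchy; since $\tilde d$ is bi-Lipschitz equivalent to the Euclidean metric on the compact interval $I$ (being induced by a continuous, strictly positive Riemannian density), the limit exists and lies in $I$. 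The functional equation of Corollary \ref{rec_do_jacobiano} then passes to the limit, giving $J=\lim_n J^n$ globally.

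Finally I would prove the Hölder bound. If $x$ and $y$ agree on coordinates $0,\dots,n$, then $\epsilon_j(x)=\epsilon_j(y)$ for $j=0,\dots,n-1$, so for every $m>n$ we can write $J^m(x)=g(w^x)$ and $J^m(y)=g(w^y)$ with the common outer block $g=f_{\epsilon_0}\circ\cdots\circ f_{\epsilon_{n-1}}$ (a composition of $n$ maps, hence a $\lambda^n$-contraction of $\tilde d$) and with $w^x,w^y\in I$ the respective tails. Thus $\tilde d\big(J^m(x),J^m(y)\big)\le\lambda^{n}\,\mathrm{diam}_{\tilde d}(I)$; letting $m\to\infty$ and returning to the Euclidean metric gives $|J(x)-J(y)|\le C\lambda^{n}$ for a constant $C=C(\alpha,\gamma)$. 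With the usual distance $d_\Omega$ on $\{1,2\}^{\mathbb N}$, for which $d_\Omega(x,y)\le 2^{-n}$ whenever $x,y$ agree on their first $n{+}1$ coordinates, this reads $|J(x)-J(y)|\le C'\,d_\Omega(x,y)^{\kappa}$ with $\kappa=-\log_2\lambda>0$, which is the desired Hölder continuity; that $J$ takes values in $(0,1)$ has already been recorded in Remark \ref{obs_jacob_n_lema}.

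The step I expect to require the most care is the bookkeeping in the first and third paragraphs: one must be sure that agreement of the \emph{initial} coordinates of $x$ and $y$ controls precisely the \emph{outermost} blocks of the compositions $J^m(x)$ and $J^m(y)$ — which it does, since the composition in Lemma \ref{tri} is built starting from coordinate $0$ — so that the contraction factor comes out as $\lambda^n$ rather than something weaker. Once that indexing is fixed, everything reduces to the Lemma above and the equivalence of $\tilde d$ with the Euclidean metric on $I$.
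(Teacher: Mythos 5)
Your argument is correct and follows essentially the same route as the paper's proof: a common outer block of $n$ (resp.\ $t$) maps, each a $\lambda$-contraction in the auxiliary metric $\tilde d$, the equivalence of $\tilde d$ with the Euclidean metric on $[1-R,R]$, and the conversion of $\lambda^{n}$ into $d(x,y)^{\kappa}$. Your second paragraph merely makes explicit (via the Cauchy estimate) the everywhere-convergence of $J^n$ that the paper records informally at the end of the preceding lemma.
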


{\bf Proof:}
Denote by $0<\lambda<1$ a contraction constant (on the hyperbolic distance $\tilde{d}$) for both $f_0$ and $f_1$. We denote by $d$ the usual distance on  $\Omega$.

Given a point $k\in \Omega$, $k= (k_0,k_1,k_2,...,k_n,...)$, the Jacobian $J(k)=J(k_0,k_1,k_2,...,k_n,...)$ is obtained via the limit of expression (\ref{dada}). Given also another point $q=(q_0,q_1,q_2,...,q_n,...) \in \Omega$, if $d(q,k)=2^{-t}$, $t \in \mathbb{N}$, then,
$(k_0,k_1,k_2,...,k_n,...)$ and $(q_0,q_1,q_2,...,q_n,...)$ coincide until order $t$.  We want to compare $J^s(k) $ and $J^s(q)$ for $s$ much more larger than $t$.
%
% We will get, respectively, two different expressions.%
% For $k$ and $s>t$ we get
% \begin{equation} \label{era1}  f^{n_r}_0 \circ f^{m_r}_1 \circ f^{n_{r-1}}_0 \circ f^{m_{r-1}}_1 \circ ... \circ f_0^{n_1} \circ f_1^{m_1}\, \circ f_{v_r}^{l_r} \circ f_{u_r}^{y_r}\circ ...\circ f_{v_1}^{l_1} \circ f_{u_1}^{y_1} (1/2),
%\end{equation}
%where  $u_j,v_j\in\{1,2\}$, $l_j,y_j \in \mathbb{N},$ for $j=1,...,r.$
%
% For $q$ and $s>t$ we get
% \begin{equation} \label{era2}  f^{n_r}_0 \circ f^{m_r}_1 \circ f^{n_{r-1}}_0 \circ f^{m_{r-1}}_1 \circ ... \circ f_0^{n_1} \circ f_1^{m_1}\, \circ f_{\tilde{v}_s}^{\tilde{l}_s} \circ f_{\tilde{u}_s}^{\tilde{y}_s}\circ ...\circ f_{\tilde{v}_1}^{\tilde{l}_1} \circ f_{\tilde{v}_1}^{\tilde{y}_1} (1/2),
%\end{equation}
%where  $\tilde{u}_j,\tilde{v}_j\in\{1,2\}$, $\tilde{l}_j,\tilde{y}_j \in \mathbb{N},$ for $j=1,...,s.$
%
For $s>t$ fixed, we get from $(\ref{dada})$,
\begin{equation} \label{era3} J^s (k)= f^{n_r}_0 \circ f^{m_r}_1 \circ f^{n_{r-1}}_0 \circ f^{m_{r-1}}_1 \circ ... \circ f_0^{n_1} \circ f_1^{m_1}\, (d_s),
\end{equation}
and
 \begin{equation} \label{era}  J^s (q) = f^{n_r}_0 \circ f^{m_r}_1 \circ f^{n_{r-1}}_0 \circ f^{m_{r-1}}_1 \circ ... \circ f_0^{n_1} \circ f_1^{m_1}\, (c_s),
\end{equation}
where $n_r+m_r+...+n_1+m_1=t$ and $d_s, c_s \in [1-R,R]$.

As $f_0$ and $f_1$ are $\lambda$-contractions we get
$$  \tilde d(J^s (k),J^s (q)) < \lambda^t \,\tilde{d}(d_s,c_s) \leq \lambda^t \, \, D,$$
where we denote by $D$ the diameter of $[1-R,R]$ according to $\tilde{d}.$
Since the distance $\tilde d$ and the usual (Euclidean) distance are equivalent on $[1-R,R]$, there is a constant $K>0$ such that $|J^s (k) - J^s (q)|\leq K\cdot\lambda^t$ for every $s>t$. It follows that (as $s\to+\infty$) $|J (k) - J (q)|\leq K\cdot\lambda^t$.

If $\lambda\leq 1/2$ we get  $  |J (k) - J (q)| \leq (1/2)^t \, \, K =d(k,q)\, K$, and therefore $J$ is a Lipchitz function.
If $\lambda> 1/2$ consider $\delta>0$ such that $\lambda= (1/2)^\delta.$ In this case we get $  |J(k) - J (q)| \leq \lambda^t \, \, K = (\, (1/2)^\delta\,)^t \, \, K =  (\, (1/2)^t\,)^\delta \, K \,=\, d(k,q)^\delta\, K$ and therefore $J$ is a
 $\delta$-H\"older function.

\qed

\medskip

Remark:  The image of $J(k_0,k_1,k_2,\dots)$ is the  attractor for the Iterated Function System \cite{IFS} defined by $f_0$ and $f_1$ acting on the interval $I=[1-R,R]$. We will study when the image of $J$ is a Cantor set or an interval.

\bigskip

\begin{figure}
	\begin{center}
		\includegraphics[width=8.5cm,height=6cm]{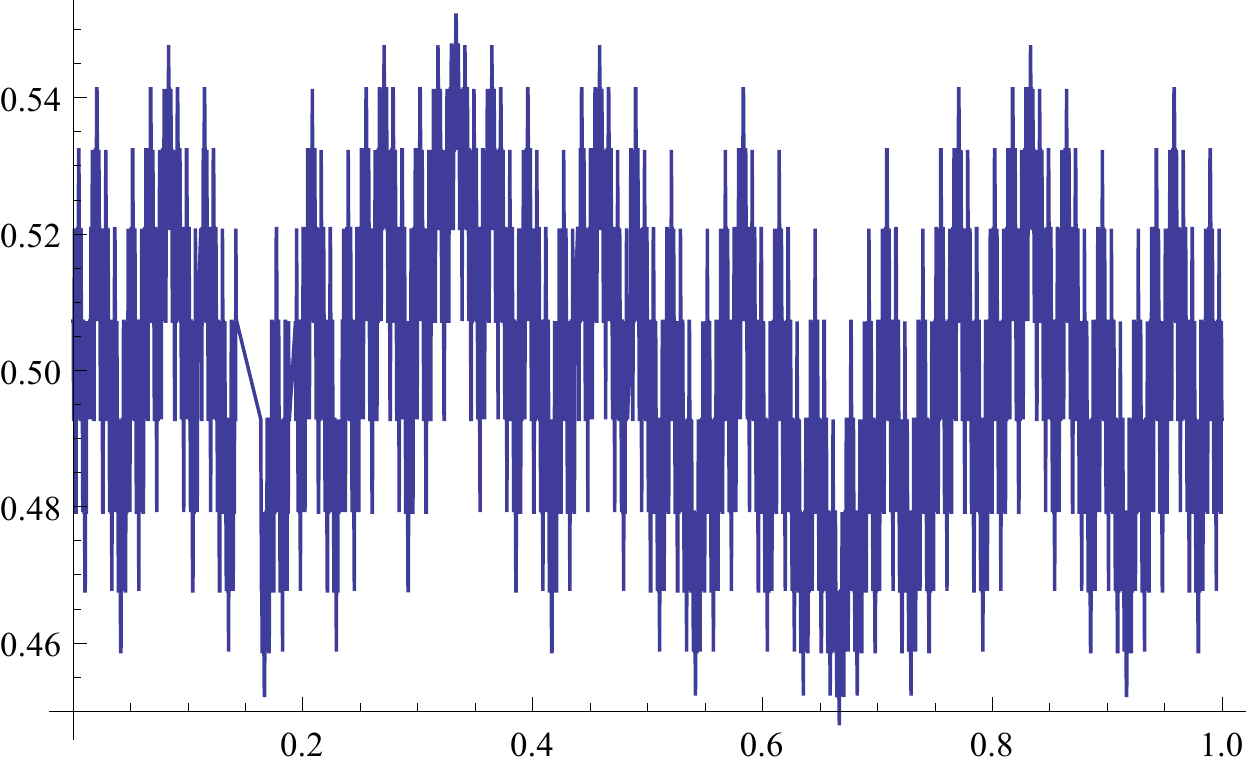}
		\caption{ The graph of the values of the Jacobian $J^{12}$  when $\cos (2 \, \theta) =0.12$ and $\beta=5.6$. The transition value of $\beta$ is equal to $2.341$. Above the points on the interval are associated with points in $\{1,2\}^\mathbb{N}$ using the binary expansion with symbols $1$ and $2$ (by this we mean: on the binary expansion we associate $0$ to $1$, and, $1$ to $2$). We considered strings with $12$ symbols $k_0,k_1,...,k_{11}\in\,\{1,2\} $.
		}
		\end{center}
\end{figure}

\begin{figure}
	\begin{center}
		\includegraphics[width=8.5cm,height=6cm]{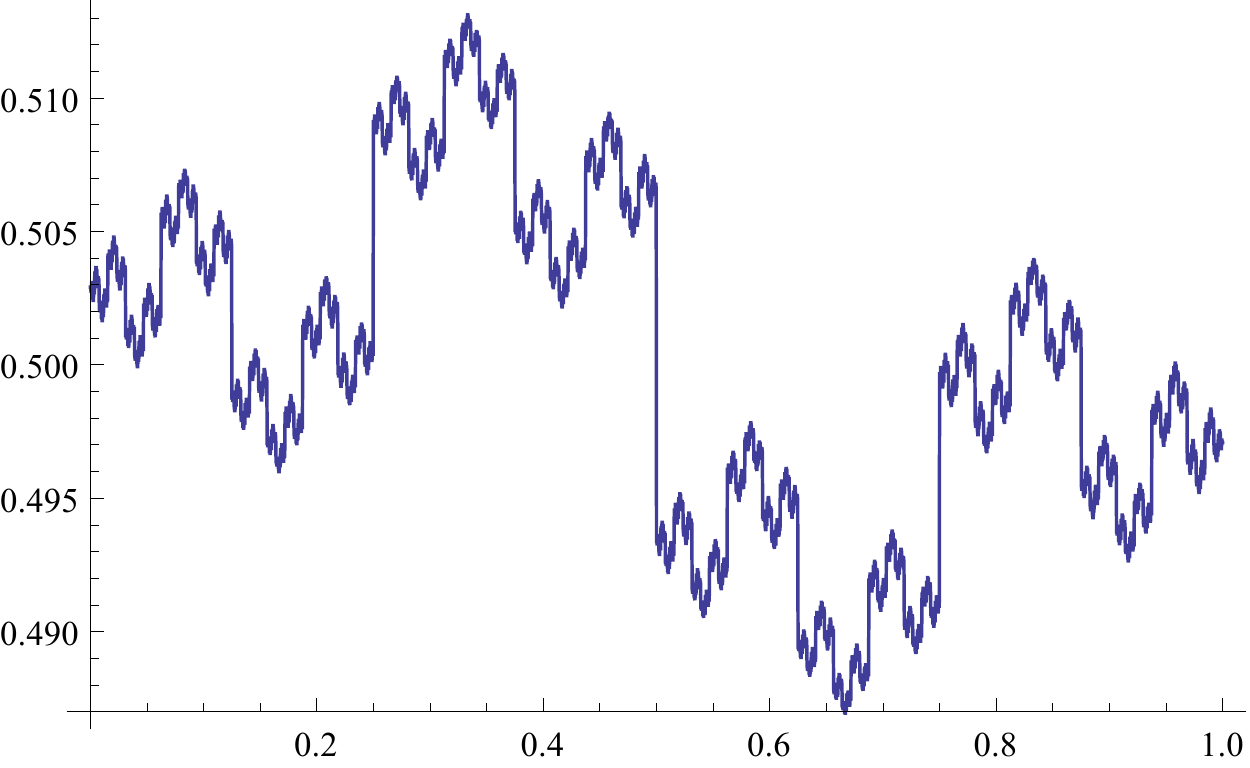}
		\caption{ The graph of the values of the Jacobian $J^{12}$  when $\cos (2 \, \theta) =0.12$ and $\beta= 0.8$. The transition value of $\beta$ is equal to $2.341$.  Above the points on the interval are associated with points in $\{1,2\}^\mathbb{N}$ using the binary expansion with symbols $1$ and $2$ (by this we mean: on the binary expansion we associate $0$ to $1$, and, $1$ to $2$). We considered strings with $12$ symbols $k_0,k_1,...,k_{11}\in\,\{1,2\} $.
		}
	\end{center}
\end{figure}

\begin{proposition} \label{tr} The image of $J$ is a Cantor set, if and only if, $0<\gamma<\alpha(1-2\alpha)$. In the other case  it is an interval. The transition value for $\beta$ where  $\gamma=\alpha(1-2\alpha)$  corresponds to the value
$$\beta = \frac{1}{2} \log (\frac{4}{ \cos^2 (2\,\theta)} - 1).$$

\end{proposition}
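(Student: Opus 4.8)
The plan is to read the dichotomy directly off the two generating maps $f_0(x)=\alpha+\gamma/x$ and $f_1(x)=1-\alpha-\gamma/x$ of the iterated function system on $I=[1-R,R]$, whose attractor is the image of $J$ (by Lemma~\ref{tri} together with the Remark above). First I would collect the structural facts already in hand: $f_0$ and $f_1$ are orientation reversing on $(0,+\infty)$, $f_0(R)=1-R$, $f_1(R)=R$, the interval $I$ is symmetric about $1/2$, and $f_0+f_1\equiv 1$. Hence $f_0(I)=[\,1-R,\ f_0(1-R)\,]$, and since $f_1=s\circ f_0$ with $s(y)=1-y$, also $f_1(I)=[\,1-f_0(1-R),\ R\,]$. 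These two subintervals of $I$ are disjoint precisely when $f_0(1-R)<1/2$, and their union is all of $I$ precisely when $f_0(1-R)\ge 1/2$. In the first case the attractor (hence the image of $J$) is a regular Cantor set — the $f_j$ being real-analytic diffeomorphisms of $I$ onto disjoint subintervals, and contractions for the metric $\tilde d$ of the preceding Lemma, so the standard theory of self-similar sets applies \cite{IFS} — while in the second case $I$ itself solves Hutchinson's fixed point equation, so the attractor is the interval $I$.

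Next I would turn $f_0(1-R)\ge 1/2$ into a condition on $\alpha,\gamma$. Since $f_0$ is decreasing and $f_0^{-1}(1/2)=\dfrac{2\gamma}{1-2\alpha}$, one has $f_0(1-R)\ge 1/2\iff R\ge t$, where $t:=1-\dfrac{2\gamma}{1-2\alpha}$. Because $\gamma<\dfrac{1-2\alpha}{4}$ (noted before the Lemma), we get $t>1/2$, which lies to the right of the vertex $(1-\alpha)/2$ of $p(x):=x^2-(1-\alpha)x+\gamma$; and $R$ is the larger root of $p$, so $R\ge t\iff p(t)\le 0$. A short computation then gives
\[ (1-2\alpha)^2\,p(t)\;=\;4\gamma^2-(1-2\alpha)(1+4\alpha)\gamma+\alpha(1-2\alpha)^2\;=\;4\bigl(\gamma-\alpha(1-2\alpha)\bigr)\Bigl(\gamma-\tfrac{1-2\alpha}{4}\Bigr). \]
Since the second factor is negative (again $\gamma<\tfrac{1-2\alpha}{4}$), we conclude $p(t)\le 0\iff\gamma\ge\alpha(1-2\alpha)$. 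Therefore the image of $J$ is an interval iff $\gamma\ge\alpha(1-2\alpha)$ and a regular Cantor set iff $0<\gamma<\alpha(1-2\alpha)$, which is the first assertion.

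For the transition value of $\beta$ I would substitute $\alpha=\dfrac{\phib}{2}=\dfrac{1}{e^{2\beta}+1}$ and $\gamma=\dfrac14(1-\phib)\cos^2(2\theta)=\dfrac14(1-2\alpha)\cos^2(2\theta)$. As $1-2\alpha>0$, the equation $\gamma=\alpha(1-2\alpha)$ reduces to $\tfrac14\cos^2(2\theta)=\alpha=\dfrac{1}{e^{2\beta}+1}$, i.e. $e^{2\beta}=\dfrac{4}{\cos^2(2\theta)}-1$, whence $\beta=\tfrac12\log\!\bigl(\tfrac{4}{\cos^2(2\theta)}-1\bigr)$; this is positive because $\tfrac{4}{\cos^2(2\theta)}-1>1$. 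Since $\alpha$ is strictly decreasing in $\beta$, the Cantor condition $\alpha>\tfrac14\cos^2(2\theta)$ holds exactly for $\beta$ below this value, recovering the stated monotone behaviour (Cantor set for $\beta<\beta_c$, interval for $\beta\ge\beta_c$).

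The algebraic identity for $(1-2\alpha)^2 p(t)$ and its factorization are routine. The step that genuinely needs care is identifying the right comparison — $f_0(1-R)$ against $1/2$ — and then checking that in the disjoint case the attractor is really a \emph{regular} Cantor set and not something degenerate; this is where one must use that $f_0,f_1$ are smooth contracting diffeomorphisms onto disjoint subintervals of $I$ (contracting in the metric $\tilde d$ of the preceding Lemma), so that the image of $J$ inherits the standard structure of such an IFS attractor. The sign bookkeeping ($t>(1-\alpha)/2$ and $\gamma<(1-2\alpha)/4$) is precisely what makes the equivalence run in both directions.
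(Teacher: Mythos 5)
Your proposal is correct and follows essentially the same route as the paper: both reduce the dichotomy to whether $f_0(I)$ and $f_1(I)$ overlap, translate that into the inequality $R \ge 1-\frac{2\gamma}{1-2\alpha}$ via the quadratic $x^2-(1-\alpha)x+\gamma$, and arrive at the identical factorization $4\gamma^2-(1-2\alpha)(1+4\alpha)\gamma+\alpha(1-2\alpha)^2=(4\gamma+2\alpha-1)(\gamma-\alpha(1-2\alpha))$ before substituting $\alpha=\frac{1}{e^{2\beta}+1}$. The only (harmless) cosmetic differences are your use of the symmetry $f_0+f_1\equiv 1$ to compare $f_0(1-R)$ with $1/2$ instead of with $f_1(1-R)$, and locating $t$ relative to the vertex of the quadratic rather than relative to $\tilde R$.
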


\begin{proof}

 As $f_0$ and $f_1$ are contractions on $I=[1-R,R]$ (in a suitable metric) the image of $J$ is a regular Cantor set or an interval.  We can say exactly when such alternative occurs. If $f_0(I)$ and $f_1(I)$ are disjoint the attractor will be a Cantor set with Hausdorff dimension strictly between $0$ and $1$. If $f_0(I)\cup f_1(I)=I$ the attractor is all the interval $I$.

Observe that $f_0(I) = [1-R, f_0(1-R)]$ and $f_1(I)=[f_1(1-R),R]$. In this way,  $f_0(I)$ and $f_1(I)$ are disjoint, if and only if,  $f_0(1-R)<f_1(1-R)$. As $f_0(1-R)=\alpha+\frac{\gamma}{1-R}$ and $f_1(1-R)=1-\alpha-\frac{\gamma}{1-R}$ we have
  $$f_0(1-R)<f_1(1-R)\iff \alpha+\frac{\gamma}{1-R}<1-\alpha-\frac{\gamma}{1-R} \iff$$
  $$ \frac{2\gamma}{1-R}<1-2\alpha\iff 1-R>\frac{2\gamma}{1-2\alpha} \iff R<1-\frac{2\gamma}{1-2\alpha}. $$

The solutions of $x^2 -(1-\alpha)x +\gamma =0$ are $\tilde R$ and $R$. As   $\frac{2\gamma}{1-2\alpha}<\frac{2\gamma}{4\gamma}=1/2$ we get $1-\frac{2\gamma}{1-2\alpha}>1/2>\tilde R$. It follows that
  $$R<1-\frac{2\gamma}{1-2\alpha}\iff (1-\frac{2\gamma}{1-2\alpha})^2-(1-\alpha)(1-\frac{2\gamma}{1-2\alpha})+\gamma>0.$$
  Therefore
  $$f_0(1-R)<f_1(1-R) \iff (1-\frac{2\gamma}{1-2\alpha})^2-(1-\alpha)(1-\frac{2\gamma}{1-2\alpha})+\gamma>0$$
$$\iff -4\gamma(1-2\alpha)+4\gamma^2+\alpha(1-2\alpha)^2+2\gamma(1-2\alpha)-2\alpha\gamma(1-2\alpha)+\gamma(1-2\alpha)^2>0.$$

The above condition can be expressed as
$$0<4\gamma^2-(1-2\alpha)(1+4\alpha)\gamma+\alpha(1-2\alpha)^2=(4\gamma+2\alpha-1)(\gamma-\alpha(1-2\alpha)).$$
As $4\gamma+2\alpha-1<0$, this condition is equivalent to  $\gamma-\alpha(1-2\alpha)<0$.

Therefore, the conclusion is that the attractor is a regular Cantor set, if and only if,
$$0<\gamma<\alpha(1-2\alpha).$$
%\textbf{(note that if $0<\alpha<1/4$, this condition  defines a proper subinterval of $(0, \frac{1-2\alpha}4)$ for the possible values of $\lambda$, but, if $1/4\le \alpha<1/2$, the condition is automatically satisfied).}

%\textbf{For the other possibility, that is, the case $\alpha(1-2\alpha)\le \lambda<\frac{1-2\alpha}4$ (which is only possible if  $0<\alpha<1/4$), we get $f_0(I)\cup f_1(I)=I$, and the attractor will be the whole interval $I$.}

From definition of $\alpha$ and $\gamma$ we have $\gamma =\frac{1}{4}\, (1-  2\alpha) \cos^2 (2 \, \theta)$.
Then the inequality $0<\gamma<\alpha(1-2\alpha)$ is equivalent to $ \frac{\cos^2 (2 \, \theta)}{4} <\alpha$.
As $ \alpha= \frac{ \Phi_\beta}{2}= \frac{1}{e^{2\, \beta}+1}$ we finally get
$$ \frac{\cos^2 (2 \, \theta)}{4} <\frac{1}{e^{2\, \beta}+1} \iff  \beta < \frac{1}{2}  \log (\frac{4}{ \cos^2 (2\,\theta)} - 1).    $$
The final conclusion is that the image of $J$ is a regular Cantor set, if and only if,
$$\beta < \frac{1}{2}  \log (\frac{4}{ \cos^2 (2\,\theta)} - 1).$$
When $\beta $ is larger we get  that the image of $J$ is an interval.

\end{proof}

\medskip

As the probability $\mu_\beta$ is mixing (because the potential is H\"older) in particular we have that for any cylinders $A=[a_1,...,a_k]$ and $B=[b_1,...,b_l]$:
\[\lim_{n\to\infty}\mu(A\cap\sigma^{-n}B)=\mu(A)\mu(B).    \]

This means
			\begin{equation}\label{mixing}
			\lim_{n\to\infty}\sum_{j_1,...,j_n}\mub(a_1,...,a_k,j_1,...,j_n,b_1,...,b_l)=\mub(a_1,...,a_k)\mub(b_1,...,b_l).
			\end{equation}

One can show the following precise result concerning the speed of convergence (see \cite{Br} for the computations):

\begin{align*}
			\lim_{n\to\infty}&\sum_{j_1,...,j_n}\mub(a_1,a_2,a_3,...,a_k,j_1,...,j_n,b_1,...,b_l)\\
			&= \bigg[ \frac{1}{2}\bigg(1+\frac{\beta_{a_1}}{\beta_{a_2}}(-1+\phib)\bigg)\mub(a_2,...,a_k)\,+\\
			&\hspc \sum_{j_1,...,j_n}\frac{\beta_{a_1}}{2}(-1+\phib) \bigg(\frac{-1}{2\beta_{a_{2}}}+\frac{\beta_{a_{2}}}{2}\bigg)\mub(a_3,...,a_k) \bigg]\mub(b_1,...,b_l)\\
			&= \mub(a_1,a_2,...,a_k)\mub(b_1,...,b_l).
			\end{align*}

\section{LDP} \label{LDP}

In this section we will present some results which are similar but more complex that the ones in section 5 in \cite{LMMM}.

\medskip

In this section we will prove Theorem \ref{teo2}.

\medskip

		\begin{definition}
			For $x=(x_1,x_2,\cdots)\in\Omega=\{1,2\}^{\N}$ and $A:\Omega\to\R$, we define the $n$-Birkhoff sum for $A$ and $x$ as
			$$ S_n(A,x) := \sum\limits_{j=0}^{n-1} A(\sigma^j(x)) = A(x)+A(\sigma(x))+\cdots+A(\sigma^{n-1}(x)). $$
		\end{definition}

		\begin{definition}
			Suppose $A:\Omega\to\R$ is a H\" older function. We define for each  $t\in\R,$
			
			$$Q_n(t) := \int e^{t\, S_n(A,x) }d \mub (x).$$
			
			In the case $A$ depends just on the first coordinate we get	
			$$Q_n(t) =  \sum_{x_0}\, \sum_{x_1}\, ...\sum_{x_n} e^{t\, (  A(x_0) + A(x_1) +...+ A(x_n) ) }\, \mub ( x_0,x_1,...x_n).$$
		\end{definition}

		 The free energy on time $t$ is	
		$$ c (t) := \lim_{n \to \infty}  \frac{1}{n}\,\, \log \int e^{t\,S_n(A,z)} d \mub (z) = \lim_{n \to \infty}  \frac{1}{n}\,\, \log Q_n(t),$$
		
		\noindent for each $t \in \mathbb{R}$. 				
		
		 Remember that
		
		$$\displaystyle\delta(t)= \sum_{j}  e^{t\, \, A(j)}\,\,\,\text{ and }\,\,\,\alpha(t)=  \sum_{j} \beta_{j}\, e^{t\, \, A(j)}.$$

		%	Note que a expressão explicita de $c(t)$ permite estimar (via transformada de Legendre) a função de desvio $I$.
		%	\medskip

		\begin{remark}\label{alphamenordelta}
			Note that $|\alpha(t)|< | \delta(t)|$, because $|\beta_j|\le 1$ and $\beta_1 = -\beta_2$. Moreover, note that $\delta(t) > 0$.
		\end{remark}

		%ANTIGO: \frac{1}{2}\log \bigg(\Big[ e^{t\, A(1)}+e^{t\,A(2)}\Big]^{2}-\Big[ \beta_{1}e^{t\, A(1)}+\beta_2 e^{t\, A(2)}\Big]^{2}\bigg)-\log(2).
		We will show that for any $t\in \mathbb{R}$,
			\begin{align*}
			&c(t)=\log\left( \frac{ \phib\delta(t) + \sqrt{\phib^2\delta(t)^2 + 4(-1+\phib)(\alpha(t)^2-\delta(t)^2)}}{4}\right).
			\end{align*}

	The function $c(t)$ is differentiable on $t$.

		\medskip
		\begin{example}
			We will estimate $Q_3(t)$. From Theorem \ref{medidacilindro} we get
			
			\begin{align*}
				\mub(j_0,j_1,j_2,j_3) = &\frac{\mub(j_1,j_2,j_3)}{2} + \frac{(-1+\phib)\beta_{j_0} \beta_{j_1}}{4}\mub(j_2,j_3)\,+ \\
					&\frac{(-1+\phib)^2\beta_{j_0} \beta_{j_2}}{16} +  \frac{(-1+\phib)^3\beta_{j_0} \beta_{j_3}}{16},	
			\end{align*}

			\noindent therefore,			
			\begin{align*}
				Q_3(t) &= \sum_{j_0}\, \sum_{j_1}\, \sum_{j_2}  \sum_{j_3} e^{t\, (  A(j_0) + A(j_1) +A(j_2) + A(j_3)) }\, \mub ( j_0,j_1,j_2,j_3)\\
				&= \bigg[\frac{1}{2}\sum_{j_0}\, e^{t\,   A(j_0)  }\,\bigg]\,\sum_{j_1}\, \sum_{j_2} \sum_{j_3}\, e^{t\, (   A(j_1) +A(j_2)+ A(j_3)) }\, \mub (j_1,j_2,j_3)\\
				&+ \frac{(-1+\phib)}{4}\bigg[\sum_{j_0}e^{t\, A(j_0)}\beta_{j_0}\bigg]\, \bigg[\sum_{j_1}e^{t\,A(j_1)}\beta_{j_1}\bigg]\, \sum_{j_2,j_3}   e^{t\, ( A(j_2)+ A(j_3)) } \mub(j_2,j_3) \\
				&+\frac{(-1+\phib)^2}{16}\sum_{j_0}\, \sum_{j_1}\, \sum_{j_2}   \sum_{j_3} e^{t\, (  A(j_0) + A(j_1) +A(j_2))+ A(j_3) }\beta_{j_0} \beta_{j_{2}}\\
				&+\frac{(-1+\phib)^3}{16}\sum_{j_0}\, \sum_{j_1}\, \sum_{j_2}   \sum_{j_3} e^{t\, (  A(j_0) + A(j_1) +A(j_2))+ A(j_3)) }\beta_{j_0} \beta_{j_{3}}\\
				&=\frac{1}{2}\delta(t)Q_2(t) +  \frac{(-1+\phib)}{4}\alpha(t)^2 Q_1(t) +\frac{\phib(-1+\phib)^2}{16}\alpha^2(t)\delta^2(t).
			\end{align*}
		\end{example}
		\medskip

		 In the general case we get:
		
		\begin{theorem}\label{teoQ} For all $n\in \mathbb{N}$ and $t\in\mathbb{R}$
			
			\begin{equation}\label{main1}
			Q_n (t)=\left[\begin{array}{l}\frac{1}{2}\delta(t)\,Q_{n-1}(t) + \frac{(-1+\phib)}{4}\alpha(t)^2\,Q_{n-2}(t)+ \frac{(-1+\phib)^2}{8}\delta(t)\,\alpha(t)^2\, Q_{n-3}(t) \\ \\
			\,\,+  \frac{(-1+\phib)^3}{16}\delta(t)^2\,\alpha(t)^2\, \, Q_{n-4}(t)+\frac{(-1+\phib)^4}{32}\delta(t)^3\,\alpha(t)^2\, \, Q_{n-5}(t)+...+\\  \\      +\frac{(-1+\phib)^{n-3}}{2^{n-2}}\, \delta(t)^{n-4}\,\alpha(t)^2\, \, Q_{ 2}(t)
			+ \frac{(-1+\phib)^{n-2}}{2^{n-1}}\, \delta(t)^{n-3}\,\alpha(t)^2\, \, Q_{ 1}(t)\\ \\
			+ \frac{\phib(-1+\phib)^{n-1}}{2^{n+1}}\alpha^2(t)\delta^{n-2}(t).\end{array}\right] .
			\end{equation}
			
		\end{theorem}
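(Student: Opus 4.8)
The plan is to obtain (\ref{main1}) by a single substitution: insert the cylinder expansion of Theorem \ref{medidacilindro} into the definition of $Q_n(t)$ and then evaluate the resulting elementary sums. No induction is needed, because Theorem \ref{medidacilindro} already expresses the length-$(n+1)$ cylinder entirely in terms of shorter cylinders. Concretely, I would write a point of the cylinder as $(x_0,x_1,\dots,x_n)$ and, using that $A$ depends only on the first coordinate, factor the weight as $e^{t\,S_n(A,x)}=\prod_{j=0}^{n}e^{t\,A(x_j)}$. Applying Theorem \ref{medidacilindro} with $k=x_0$ and the string $(x_1,\dots,x_n)$ gives
\[
\mub(x_0,\dots,x_n)=\frac{\mub(x_1,\dots,x_n)}{2}+\sum_{i=1}^{n-2}\frac{(-1+\phib)^i\beta_{x_0}\beta_{x_i}}{2^{i+1}}\,\mub(x_{i+1},\dots,x_n)+\frac{(-1+\phib)^{n-1}\beta_{x_0}\beta_{x_{n-1}}}{2^{n+1}}+\frac{(-1+\phib)^{n}\beta_{x_0}\beta_{x_{n}}}{2^{n+1}}.
\]

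Next I would multiply by $\prod_{j=0}^n e^{t\,A(x_j)}$ and sum over all $x_0,\dots,x_n\in\{1,2\}$, term by term. The mechanism is that each summation index occurring only through a factor $e^{t\,A(\cdot)}$ contributes $\delta(t)=\sum_j e^{t\,A(j)}$, each index additionally carrying a factor $\beta_{(\cdot)}$ contributes $\alpha(t)=\sum_j \beta_j e^{t\,A(j)}$, and a contiguous block $x_{i+1},\dots,x_n$ sitting inside a cylinder $\mub(x_{i+1},\dots,x_n)$ reassembles, together with its exponential weights, into $Q_{n-1-i}(t)$. In this way the first term yields $\frac{1}{2}\,\delta(t)\,Q_{n-1}(t)$, and the $i$-th term of the middle sum yields $\frac{(-1+\phib)^i}{2^{i+1}}\,\delta(t)^{i-1}\,\alpha(t)^2\,Q_{n-1-i}(t)$; letting $i$ run from $1$ to $n-2$ reproduces exactly the chain of terms in $Q_{n-2}(t),\dots,Q_1(t)$ appearing in (\ref{main1}).

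Finally, for the last two terms of the cylinder formula every index is summed out; counting the free indices shows that both contributions equal $\alpha(t)^2\,\delta(t)^{n-1}$ up to the power of $(-1+\phib)$, and adding them with $1+(-1+\phib)=\phib$ collapses them into the single closed term $\frac{\phib(-1+\phib)^{n-1}}{2^{n+1}}\,\alpha(t)^2\,\delta(t)^{n-1}$ — i.e.\ the final summand of (\ref{main1}), the power of $\delta(t)$ being read off directly from the number of free indices, consistently with the $Q_3(t)$ computation above. The argument is conceptually transparent; the one place that demands care is the bookkeeping of exponents — the powers of $2$, of $(-1+\phib)$, and especially of $\delta(t)$, whose exponent in each term is precisely the number of ``free'' indices lying between, and to the right of, the two $\beta$-factors. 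Since the explicit evaluation of $Q_3(t)$ already exhibits every mechanism of the general case and the small instances $n=2,3$ are checked by hand, this is where I would stop once the indices are confirmed to line up. (Alternatively, one could iterate the three-term recursion of Theorem \ref{recursive}, but the dependence of $a(k_0,k_1)$ and $b(k_0,k_1)$ on whether $k_0=k_1$ makes that route more case-ridden than the direct substitution above.)
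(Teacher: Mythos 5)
Your argument is precisely the paper's own proof (Theorem \ref{teoQ1} in the appendix): substitute the expansion of Theorem \ref{medidacilindro} into $Q_n(t)=\sum_{j_0,\dots,j_n}e^{t(A(j_0)+\cdots+A(j_n))}\mub(j_0,\dots,j_n)$ and evaluate each elementary sum, reading off $\delta(t)$ for a free index, $\alpha(t)$ for an index carrying a $\beta$-factor, and a reassembled $Q_{n-1-i}(t)$ for the surviving cylinder; the paper uses no induction either, and your bookkeeping of the middle sum agrees term by term with (\ref{main1}). One point deserves attention. Your count of free indices in the two terminal terms is the correct one and gives $\delta(t)^{n-1}$ (there are $n+1$ indices $j_0,\dots,j_n$, exactly two of which carry a $\beta$-factor), whereas the printed statement --- and the appendix computation, which inadvertently omits the factor $\sum_{j_1}e^{tA(j_1)}$ from those two summands --- records $\delta(t)^{n-2}$. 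So your closing term does not literally match the last summand of (\ref{main1}); it corrects it. The paper's explicit example for $Q_3(t)$, whose closed term is $\frac{\phib(-1+\phib)^2}{16}\alpha^2(t)\delta^2(t)$ with $\delta^2=\delta^{3-1}$, confirms your exponent and not the one in the statement. The slip is harmless downstream: in deriving Proposition \ref{recurrence_relation} the terminal terms of $Q_n(t)$ and of $\frac{(-1+\phib)}{2}\delta(t)Q_{n-1}(t)$ cancel provided the exponent of $\delta(t)$ increases by one when $n$ does, which holds under either convention, so the second-order recurrence and Theorem \ref{teo2} are unaffected.
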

		
		For the proof  see Appendix (Theorem \ref{teoQ1})

\medskip
				
		\begin{proposition}\label{recurrence_relation}
			\[Q_{n+2}(t) = (-1+\phib)\frac{\alpha(t)^2-\delta(t)^2}{4}\,Q_{n}(t) + \frac{\phib}{2}\delta(t)\,Q_{n+1}(t).\]
		\end{proposition}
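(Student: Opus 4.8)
\textbf{Proof plan for Proposition \ref{recurrence_relation}.}
The idea is to eliminate the infinite ``tail'' appearing in the closed formula of Theorem \ref{teoQ} by taking an appropriate linear combination of $Q_{n+2}(t)$ and $Q_{n+1}(t)$. Writing out \eqref{main1} for $Q_{n+2}(t)$, all terms from the third one onward, together with the final $\phib$-term, form a geometric-type series in which each successive summand carries an extra factor $\frac{(-1+\phib)}{2}\delta(t)$ compared with the previous one. The same structure appears in the formula for $Q_{n+1}(t)$. Hence I would first record the two expansions
\begin{align*}
Q_{n+2}(t) &= \tfrac12\delta(t)Q_{n+1}(t) + \tfrac{(-1+\phib)}{4}\alpha(t)^2 Q_n(t) + \tfrac{(-1+\phib)}{2}\delta(t)\, R_{n+1}(t),\\
Q_{n+1}(t) &= \tfrac12\delta(t)Q_{n}(t) + \tfrac{(-1+\phib)}{4}\alpha(t)^2 Q_{n-1}(t) + \tfrac{(-1+\phib)}{2}\delta(t)\, R_{n}(t),
\end{align*}
where $R_{n+1}(t)$ denotes the common tail (starting with $\tfrac{(-1+\phib)}{4}\alpha(t)^2 Q_{n-1}(t)$, etc.), and observe the key identity $R_{n+1}(t) = \tfrac{(-1+\phib)}{4}\alpha(t)^2 Q_{n-1}(t) + \tfrac{(-1+\phib)}{2}\delta(t) R_n(t)$. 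In fact, comparing with the first display, one sees directly that $R_{n+1}(t) = Q_{n+1}(t) - \tfrac12\delta(t)Q_n(t)$, i.e. the tail is itself expressible through two consecutive $Q$'s.

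With that identification in hand the proof is essentially algebra. Substitute $R_{n+1}(t) = Q_{n+1}(t) - \tfrac12\delta(t)Q_n(t)$ into the expansion of $Q_{n+2}(t)$:
\begin{align*}
Q_{n+2}(t) &= \tfrac12\delta(t)Q_{n+1}(t) + \tfrac{(-1+\phib)}{4}\alpha(t)^2 Q_n(t) + \tfrac{(-1+\phib)}{2}\delta(t)\Bigl(Q_{n+1}(t) - \tfrac12\delta(t)Q_n(t)\Bigr)\\
&= \Bigl(\tfrac12\delta(t) + \tfrac{(-1+\phib)}{2}\delta(t)\Bigr)Q_{n+1}(t) + \Bigl(\tfrac{(-1+\phib)}{4}\alpha(t)^2 - \tfrac{(-1+\phib)}{4}\delta(t)^2\Bigr)Q_n(t).
\end{align*}
Since $\tfrac12 + \tfrac{-1+\phib}{2} = \tfrac{\phib}{2}$, the coefficient of $Q_{n+1}(t)$ is $\tfrac{\phib}{2}\delta(t)$, and the coefficient of $Q_n(t)$ is $(-1+\phib)\tfrac{\alpha(t)^2-\delta(t)^2}{4}$, which is exactly the claimed recurrence. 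To make this rigorous rather than formal I would justify the tail identity by induction on $n$ using Theorem \ref{teoQ} directly: the formula \eqref{main1} for $Q_{n+2}$, when one factors $\tfrac{(-1+\phib)}{2}\delta(t)$ out of every term beyond the first two, reproduces term-by-term the formula \eqref{main1} for $Q_{n+1}$ with the leading $\tfrac12\delta(t)Q_n$ term stripped off — the exponents of $(-1+\phib)$, of $\delta(t)$, and of $\tfrac12$ all shift by exactly one, and the terminal $\phib$-term transforms correctly as well.

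\textbf{Main obstacle.} The only delicate point is bookkeeping: one must check that the ``shift by one'' is uniform across \emph{all} terms of \eqref{main1}, including the two exceptional ones at the end (the $Q_1(t)$ term and the pure $\phib(-1+\phib)^{n-1}$ term), and that the induction has a valid base case (which is handled by the explicit computation of $Q_3(t)$, already carried out in the Example preceding Theorem \ref{teoQ}, so $Q_4$ against $Q_3$, $Q_2$ can be verified by hand). Everything else is routine manipulation of geometric series, and no analytic input is needed — the identity holds for every $t\in\mathbb{R}$ simply because it is a polynomial identity in $\delta(t)$, $\alpha(t)$, $\phib$ among the quantities $Q_n(t)$.
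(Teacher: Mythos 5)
Your proposal is correct and is essentially the paper's own argument: the paper likewise writes out the expansion of Theorem \ref{teoQ} for two consecutive indices and forms the combination $Q_n(t)-\frac{(-1+\phib)}{2}\delta(t)\,Q_{n-1}(t)$, in which all tail terms (including the terminal $\phib$-term) cancel, leaving exactly the three surviving terms you obtain. Your reformulation via the tail identity $R_{n+1}(t)=Q_{n+1}(t)-\tfrac12\delta(t)Q_n(t)$ is just this same telescoping cancellation stated in different words, and the uniform shift-by-one you flag as the main obstacle is verified by the paper by direct term-by-term inspection, with no induction needed.
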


\medskip
For the proof see Appendix (Theorem \ref{recurrence_relation1})

\medskip

The equation presented in  Proposition \ref{recurrence_relation} (positive temperature)  is more complex when compared with the analogous  one (Proposition 5.2) in \cite{LMMM} (zero temperature). Note that at zero temperature $\beta\to\infty$ and  $\phib\to 0$. At zero temperature the last term above disappears. In the present case $\phib > 0$, and we need a recurrence relation.

Note that for fixed $t$ and $\beta$ the expressions $(-1+\phib)\frac{\alpha(t)^2-\delta(t)^2}{4}$ and $\frac{\phib}{2}\delta(t)$ do not depend on $n$. For each  $t$ we get a second order recurrence relation which will solved by using a result we get from   \cite{Scheinerman}:
		
		\bigskip
		
		\begin{theorem}\label{scheinerman}
			Given the real numbers $s_1$ and $s_2$ suppose that $r_1, r_2$ are the roots of the equation $x^2-s_1\,x - s_2 = 0$. If $r_1 \ne r_2$, then, any solution of the recurrence equation
			$$a_n = s_1 a_{n-1} + s_2 a_{n-2}$$
			is of the form
			$$a_n = c_1 r_1^n + c_2 r_2^n,$$
			where $c_1, c_2$ are  constants.
		\end{theorem}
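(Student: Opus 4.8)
The plan is to prove the two directions separately: first that every sequence of the stated form solves the recurrence, and then that every solution has the stated form, so that the family $\{c_1 r_1^n + c_2 r_2^n : c_1,c_2\in\mathbb{R}\}$ is exactly the solution set.

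For the first direction, I would fix constants $c_1,c_2$ and put $a_n = c_1 r_1^n + c_2 r_2^n$. Since $r_1$ and $r_2$ are roots of $x^2 - s_1 x - s_2 = 0$, we have $r_j^2 = s_1 r_j + s_2$ for $j=1,2$, and multiplying by $r_j^{n-2}$ gives $r_j^n = s_1 r_j^{n-1} + s_2 r_j^{n-2}$. Forming the linear combination $c_1$ times the $j=1$ identity plus $c_2$ times the $j=2$ identity yields $a_n = s_1 a_{n-1} + s_2 a_{n-2}$, so $(a_n)$ solves the recurrence.

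For the converse, I would use that the recurrence is of second order, hence a solution $(a_n)$ is completely determined by its two initial values $a_0$ and $a_1$: once $a_{n-2}$ and $a_{n-1}$ are known, $a_n$ is forced, and an immediate induction propagates this to all $n$. So it suffices to find $c_1,c_2$ with $c_1 + c_2 = a_0$ and $c_1 r_1 + c_2 r_2 = a_1$; then the sequence $n\mapsto c_1 r_1^n + c_2 r_2^n$, which by the first part also solves the recurrence and agrees with $(a_n)$ in its first two terms, must coincide with $(a_n)$ for every $n$. The linear system for $(c_1,c_2)$ has coefficient matrix with determinant $r_2 - r_1$, the Vandermonde determinant, which is nonzero precisely because $r_1 \neq r_2$; hence $c_1,c_2$ exist and are unique.

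The only place where the hypothesis $r_1 \neq r_2$ is used is the invertibility of this $2\times 2$ system; in the coincident-root case one would instead need the basis $r^n, n r^n$, but that situation does not arise here. There is no serious obstacle in the argument: it is the classical theory of linear difference equations, and \cite{Scheinerman} may be cited for the remaining routine details.
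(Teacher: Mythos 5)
Your proof is correct. The paper does not actually prove this statement --- it imports it verbatim from \cite{Scheinerman} --- and your argument (verify that $c_1 r_1^n + c_2 r_2^n$ satisfies the recurrence via $r_j^2 = s_1 r_j + s_2$, then match the two initial values by solving the $2\times 2$ Vandermonde system, invertible exactly because $r_1 \neq r_2$) is the standard, complete proof of the cited result.
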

	
\noindent		
\textbf{Proof of Theorem \ref{teo2}:}		 Let's check that the recurrence relation presented in Proposition \ref{recurrence_relation} satisfies the  hypothesis of theorem \ref{scheinerman}. The roots of
$$x^2 - \frac{\phib}{2}\,\delta(t)\,x - (-1+\phib)\frac{\alpha(t)^2\,-\,\delta(t)^2}{4}$$
are
		
		\begin{align*}
			r_1 = r_1(t) =& \frac{\frac{\phib}{2}\delta(t) + \sqrt{\frac{\phib^2}{4}\delta(t)^2 + 4(-1+\phib)\frac{\alpha(t)^2-\delta(t)^2}{4}}}{2}\\
			&= \frac{ \phib\delta(t) + \sqrt{\phib^2\delta(t)^2 + 4(-1+\phib)(\alpha(t)^2-\delta(t)^2)}}{4}
		\end{align*}
		
		\noindent and
		\begin{align*}
			r_2 = r_2(t) =& \frac{\frac{\phib}{2}\delta(t) - \sqrt{\frac{\phib^2}{4}\delta(t)^2 + 4(-1+\phib)\frac{\alpha(t)^2-\delta(t)^2}{4}}}{2}\\
			&= \frac{ \phib\delta(t) - \sqrt{\phib^2\delta(t)^2 + 4(-1+\phib)(\alpha(t)^2-\delta(t)^2)}}{4}.
		\end{align*}
		
		 By Remark \ref{alphamenordelta} it follows that $\phib\delta(t) > 0$ and moreover $4\,(-1+\phib)\,(\alpha(t)^2-\delta(t)^2) > 0$, because $\phib \in (0,1)$. In this case, $r_1 \ne r_2$. Moreover, $r_1 > -r_2 > 0$. Indeed,
		
		$$  \sqrt{\phib^2\delta(t)^2 + 4(-1+\phib)(\alpha(t)^2-\delta(t)^2)} > \sqrt{\phib^2\delta(t)^2} = \phib\delta(t) > 0.$$
		
		 Therefore,
				$$ Q_n(t) = c_1(t) r_1(t)^n + c_2(t) r_2(t)^n$$
with $r_1 > -r_2 > 0$. 	It follows that, for any $t$,
		\begin{align*}
			c(t) &= \lim_{n\to \infty}\frac{1}{n}\log Q_n(t) = \log r_1(t)\\
			&= \log\left( \frac{ \phib\delta(t) + \sqrt{\phib^2\delta(t)^2 + 4(-1+\phib)(\alpha(t)^2-\delta(t)^2)}}{4}\right).
		\end{align*}

		\qed
		
\medskip

	\section{Some results on zero temperature} \label{zero}

	In this section we will complement some results obtained in \cite{LMMM}. We denote by $\mu$ the zero temperature quantum spin probability on $\{1,2\}^\mathbb{N}$ as described in \cite{LMMM} which  is ergodic but not mixing. We want to prove Theorem \ref{zeze}. Initially we remember some definitions and results from \cite{LMMM}.
	
	\medskip
	
	We denote in this section $  \beta_1= \sin(2\, \theta)$, $\beta_2=-\beta_1$, $\gamma=\frac{1}{4} \bigg(1-\beta_1^2\bigg) = \mu(1,1),$
			$$a(k_0,k_1)= \frac{1}{2} \bigg(1-\frac{\beta_{k_0}}{\beta_{k_1}}\bigg)$$
	and
	$$ b(k_0,k_1)= \frac{1}{4} \beta_{k_0}\bigg(\frac{1}{\beta_{k_{1}}}-\beta_{k_{1}}\bigg).$$
	At zero temperature we  get

\noindent 	
	a) if $k_0=k_1$, then $a(k_0,k_1)=0$ and $0< b(k_0,k_1)=\gamma=\frac{1}{4} (1-\beta_1^2)<\frac{1}{4}$
	
\noindent	
	b) if $k_0\neq k_1$, then $a(k_0,k_1)=1$ and $-\frac{1}{4}<b(k_0,k_1)=-\gamma=\frac{1}{4} (\beta_1^2-1)<0$
	
	\medskip
	
	Using $a$ and $b$ the measure $\mu$ can be computed recursively in finite cylinders\footnote{comparing this recursively relations with those for positive temperature we get that $\mu(C)=\lim_{\beta\to\infty} \mu_\beta (C)$ for any cylinder set $C\subset \Omega$ and therefore $\mu$ is the weak* limit of $\mu_\beta$ as $\beta \to \infty$.}  by $\mu(\{1,2\}^\mathbb{N} )=1$, $\mu(1)=\mu(2)=1/2$ and, for $n\ge 1$,
	$$\mu(k_0,k_1,k_2,...,k_n)=a(k_0,k_1)\mu(k_1,k_2,k_3,...,k_n)+b(k_0,k_1)\mu(k_2,k_3,...,k_n),$$
	which can be rewritten as
	\begin{equation} \label{eq1}
\frac{\mu(k_0,k_1,k_2,...,k_n)}{\mu(k_1,k_2,...,k_n)}=a(k_0,k_1)+b(k_0,k_1)\frac{1}{\frac{\mu(k_1,k_2,k_3,...,k_n)}{\mu(k_2,k_3,...,k_n)}}.
	\end{equation}
	
	The Jacobian $J$ of the invariant probability $\mu$ is given by
	$$J(k_0,k_1,k_2,...,k_n,...)=\lim_{n\to \infty}\frac{\mu(k_0,k_1,k_2,...,k_n)}{\mu(k_1,k_2,...,k_n)}$$
	which exists $\mu$ almost everywhere and satisfies  the lemma below (see \cite{LMMM}).

	\begin{lemma}\label{jexpansion}
	
	\begin{equation} \label{cfg1} J(k_0,k_1,k_2,...) = a(k_0,k_1) + b(k_0,k_1)\frac{1}{J(k_1,k_2,k_3,...)}
	\end{equation}
and \footnotesize
		\begin{align*}
		J(k_0&,k_1,k_2,...) = \lim_{n} \left[a(k_0,k_1) +  b(k_0,k_1)\,\frac{1}{a(k_1,k_2) +  b(k_1,k_2)\frac{1}{...a(k_{n-1},k_n)+b(k_{n-1},k_n)\frac{1}{1/2}}}\right],
		\end{align*} \normalsize
		if the limit exists.
	\end{lemma}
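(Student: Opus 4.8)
The plan is to derive both formulas directly from the recurrence relation (\ref{eq1}), which is the zero-temperature analogue of the identity already exploited in the positive-temperature case. First I would prove (\ref{cfg1}). For a fixed point $x = (k_0,k_1,k_2,\dots)$ where the Jacobian exists, set $J^n(x) := \mu(k_0,\dots,k_n)/\mu(k_1,\dots,k_n)$. Equation (\ref{eq1}) says precisely
\[
J^n(k_0,k_1,\dots,k_n) = a(k_0,k_1) + b(k_0,k_1)\,\frac{1}{J^{n-1}(k_1,k_2,\dots,k_n)}.
\]
Since $J(x) = \lim_{n\to\infty} J^n(x)$ by definition (where it exists), and since the same limit exists at the shifted point $\sigma(x) = (k_1,k_2,\dots)$ for $\mu$-almost every $x$ (using $\sigma$-invariance of $\mu$ and the almost-everywhere existence of the Jacobian), I would pass to the limit $n\to\infty$ on both sides. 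The map $u \mapsto a(k_0,k_1) + b(k_0,k_1)/u$ is continuous away from $u=0$, and one has the a priori lower bound $J^{n-1}(k_1,\dots,k_n) \ge b(k_1,k_1) = \gamma > 0$ (exactly the estimate recorded in Remark \ref{obs_jacob_n_lema} in the positive-temperature setting, whose proof — an induction using $\mu(x_0,\dots,x_n) \ge \mu(x_0,x_0,x_1,\dots,x_n)$ — carries over verbatim here). Hence the denominator stays bounded away from zero and the limit may be taken inside, yielding (\ref{cfg1}).

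For the second, continued-fraction, formula I would iterate (\ref{cfg1}) — or rather its finite-level version — downward through the string. Observe that from the base case $\mu(1)=\mu(2)=1/2$ and the recurrence at $n$ one gets $J^1(k_{n-1},k_n) = \mu(k_{n-1},k_n)/\mu(k_n) = a(k_{n-1},k_n) + b(k_{n-1},k_n)\cdot\frac{1}{1/2}$, since $\mu(\,\cdot\,)$ on length-one cylinders is $1/2$. Then repeated substitution of the relation $J^{j}(k_{n-j},\dots,k_n) = a(k_{n-j},k_{n-j+1}) + b(k_{n-j},k_{n-j+1})/J^{j-1}(k_{n-j+1},\dots,k_n)$ telescopes $J^n(k_0,\dots,k_n)$ into the finite continued fraction
\[
a(k_0,k_1) + b(k_0,k_1)\cfrac{1}{a(k_1,k_2)+b(k_1,k_2)\cfrac{1}{\ddots\, a(k_{n-1},k_n)+b(k_{n-1},k_n)\frac{1}{1/2}}}.
\]
Taking $n\to\infty$ and invoking $J(x) = \lim_n J^n(x)$ gives the claimed expression. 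This is entirely parallel to the argument for Lemma \ref{tri} in the positive-temperature case, with the values of $a,b$ replaced by their $\beta\to\infty$ limits.

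The main obstacle — and the only genuinely nontrivial point — is justifying that the limit defining $J$ actually exists and that one may interchange it with the continued-fraction expansion, i.e. that the finite truncations converge. In the body of the paper this convergence is established in the positive-temperature case by showing $f_0,f_1$ are contractions of the interval $I$ in a hyperbolic (Poincaré) metric; at zero temperature the analogous maps are $x\mapsto \gamma/x$ and $x\mapsto 1-\gamma/x$, and one must check the corresponding invariant interval and contraction property. I would therefore either (i) cite \cite{LMMM}, where precisely this zero-temperature statement is proved, so that the lemma here is a restatement, or (ii) if a self-contained argument is wanted, run the same Möbius/hyperbolic-metric contraction argument used in Section \ref{Ho} with $\alpha$ replaced by $0$; note that $f_0(x)=\gamma/x$ still maps a suitable closed interval into itself and is a contraction in the hyperbolic metric there. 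Everything else is routine algebra on the recurrence.
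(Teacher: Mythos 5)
Your proposal is correct and follows essentially the same route as the paper: the paper proves the positive-temperature analogue (Lemma \ref{tri}) by exactly this telescoping of the recurrence $J^n = a + b/J^{n-1}$ down to the base case $J^1(k_{n-1},k_n)=a(k_{n-1},k_n)+b(k_{n-1},k_n)\frac{1}{1/2}$, and for the zero-temperature statement it simply cites \cite{LMMM}, which is your option (i). Your additional care about the denominator being bounded below by $\gamma>0$ when passing to the limit in (\ref{cfg1}) is a sound (and slightly more explicit) version of what the paper leaves implicit.
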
 In this sense ${J}$ has an expression in continued fraction (according to \cite{LMMM}).
The following result is mentioned in \cite{LMMM}. Below we will provide a complete proof.

\begin{theorem} \label{port} At zero temperature the Jacobian of $\mu$ assumes just two values $p$ and $1-p$ almost everywhere, where $p=\frac{1+\beta_1}2$.
\end{theorem}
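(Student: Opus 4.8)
The plan is to exploit the recursion (\ref{cfg1}) together with the special structure at zero temperature, namely that $a(k_0,k_1)\in\{0,1\}$ and $b(k_0,k_1)\in\{\gamma,-\gamma\}$. First I would split into the two cases. If $k_0=k_1$, then $a(k_0,k_1)=0$ and $b(k_0,k_1)=\gamma$, so (\ref{cfg1}) gives $J(k_0,k_1,k_2,\dots)=\gamma/J(k_1,k_2,\dots)$. If $k_0\ne k_1$, then $a(k_0,k_1)=1$ and $b(k_0,k_1)=-\gamma$, so $J(k_0,k_1,k_2,\dots)=1-\gamma/J(k_1,k_2,\dots)$. Writing $y=J(k_1,k_2,\dots)$ and thinking of the two maps $g_0(y)=\gamma/y$ and $g_1(y)=1-\gamma/y$, the value $J(k_0,k_1,\dots)$ is obtained from a nested composition of these two maps; and since $\gamma=\frac14(1-\beta_1^2)<\frac14$, a direct check shows $p=\frac{1+\beta_1}2$ and $1-p=\frac{1-\beta_1}2$ satisfy $g_0(p)=\gamma/p=\frac{(1-\beta_1^2)/4}{(1+\beta_1)/2}=\frac{1-\beta_1}{2}=1-p$ and likewise $g_0(1-p)=p$, while $g_1(p)=1-\gamma/p=1-(1-p)=p$ and $g_1(1-p)=1-p$. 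So the two-point set $\{p,1-p\}$ is forward-invariant under both $g_0$ and $g_1$.

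Next I would establish that the orbit of the initial value $1/2$ under the relevant compositions converges to this two-point set $\mu$-almost everywhere. The key observation is that the symbol $k_{h}=k_{h+1}$ (a "no change") applies $g_0$ and the symbol $k_h\ne k_{h+1}$ (a "change") applies $g_1$; both $g_0$ and $g_1$ map $\{p,1-p\}$ to itself, and one checks that $g_0$ is contracting near this set and $g_1$ fixes each of $p,1-p$. More precisely, I would show that for any infinite string the finite-level quantities $J^n$ converge: since $g_0$ has derivative $-\gamma/y^2$, which at $y\in\{p,1-p\}$ has modulus $\gamma/p^2$ or $\gamma/(1-p)^2$, and a short computation with $\gamma<1/4$ shows these are $<1$ in the generic case (and one uses $\theta\ne\pi/4$, i.e. $\beta_1\ne 0$, so $p\ne 1/2$), the composition is eventually contracting. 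The cleanest route is probably: show by the Lemma-\ref{tri}-style argument that $J$ exists a.e., then show that the limit must be a fixed point of the appropriate composition, hence lies in $\{p,1-p\}$; alternatively, observe that $\{p,1-p\}$ is exactly the attractor of the IFS $\{g_0,g_1\}$ on a suitable interval and invoke that the finite compositions starting from $1/2$ accumulate only on the attractor.

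Finally I would pin down that $J$ takes \emph{exactly} these two values (and not, say, only one of them) on a set of full measure. This follows because $g_1$ fixes both $p$ and $1-p$, while $g_0$ swaps them, so the limiting value along a string is governed by the parity of the number of "no-change" steps in the tail; since $\mu$-a.e. string has infinitely many changes and infinitely many repetitions, and since the tail behavior determines which of $p,1-p$ is selected, both values occur, each on a positive-measure (indeed full-measure for its defining event) set. The main obstacle I anticipate is the convergence/identification step: justifying rigorously that the nested continued-fraction expression of Lemma \ref{jexpansion}, started at the "wrong" value $1/2$, actually converges to a point of $\{p,1-p\}$ for a.e. string — this requires a careful contraction estimate for $g_0$ (using $\gamma<1/4$ and $\beta_1\ne0$) and an argument that the non-contracting map $g_1$ does not spoil convergence because it fixes the target set pointwise. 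I would handle this by passing to a metric in which both maps are non-expanding and $g_0$ is strictly contracting, mirroring the hyperbolic-metric argument used earlier in the paper for the positive-temperature case.
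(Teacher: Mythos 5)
Your first paragraph is sound and matches the endgame of the paper's argument: with $\gamma=p(1-p)$ the maps $g_0(y)=\gamma/y$ and $g_1(y)=1-\gamma/y$ leave $\{p,1-p\}$ invariant, $g_0$ swapping the two points and $g_1$ fixing each. The gap is in the convergence step, and the mechanism you propose cannot be repaired: since $g_0\circ g_0=\mathrm{id}$, $g_0$ is an involution of $[1-p,p]$, so no metric whatsoever makes it strictly contracting (or even eventually contracting), and your derivative estimate is wrong --- $|g_0'(p)|=\gamma/p^2=(1-p)/p$ and $|g_0'(1-p)|=p/(1-p)$ multiply to $1$, so one of them exceeds $1$ whenever $p\neq 1/2$. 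Likewise $g_1$ maps $[1-p,p]$ \emph{onto} itself, fixing both endpoints with one attracting and one repelling. Hence $\{g_0,g_1\}$ is not a contracting IFS and $\{p,1-p\}$ is not its attractor; this is exactly why the hyperbolic-metric argument of Section \ref{Ho} does not carry over to zero temperature (there $f_0(I)$ and $f_1(I)$ are strictly inside $I$; here the corresponding inclusions degenerate to equalities, the left endpoint $1-p=\tilde R$ becoming a repelling fixed point). Note also that convergence of $J^n$ genuinely fails on certain strings, e.g.\ $(1,1,2,2)^\infty$ by Lemma \ref{1221}, so any correct proof must isolate an exceptional null set rather than establish convergence everywhere.

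What the paper does instead is combinatorial. It passes to the change/no-change alphabet $\{a,b\}$, shows that the blocks $aa$, $abab$ and $baba$ act as the identity on the continued-fraction expansion (Lemma \ref{1221}) while $bb$ may be rewritten as $ba,ab$, and thereby reduces the truncated expansion of an arbitrary string to a pure power $f_1^n$ or $f_2^n$ of one of the two four-letter maps $f_1(x)=1-\frac{1}{\gamma^{-1}-1/x}$ and $f_2(x)=\frac{1}{\gamma^{-1}-1/(1-x)}$, each of which has $p$ and $1-p$ as its only fixed points in $[1-p,p]$ and a single global attractor there ($p$ for $f_1$, $1-p$ for $f_2$). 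Iteration of a single map toward its attracting fixed point, applied to the truncation value $1/2\in(1-p,p)$, replaces the joint contraction that is unavailable here. To complete a proof along your lines you would need a substitute for this cancellation-and-reduction step; the contraction-metric route is a dead end.
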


From now on we present some new material which was not discussed in  \cite{LMMM}.

\bigskip

Let $A$ be the set of points $\theta \in \{0,1\}^{\mathbb{N}}$ such that $J(\theta)=p$ and $B$ be the set of points $\theta\in \{0,1\}^{\mathbb{N}}$ such that $J(\theta)=1-p$. The sets $A$ and $B$ are Borel sets. Indeed,  given positive integers  $m$ and $n_0$, the set $X(m,n_0)$ of the points $\theta=(\theta_0,\theta_1,\dots)\in \{1,2\}^{\mathbb N}$, such that, for some $n\ge n_0$, $|\frac{\mu(k_0,k_1,k_2,...,k_n)}{\mu(k_1,k_2,...,k_n)}-p|>1/m$ is a union of cylinder sets, therefore an open set. From this we get $A=\cap_{m\in {\mathbb N^*}}(\{1,2\}^{\mathbb N}\setminus \cap_{n_0\in {\mathbb N^*}}X(m,n_0))$ is a Borel set. The same argument shows that $B$ is a Borel set.

\begin{figure}
\begin{center}
\includegraphics[width=8.5cm,height=6cm]{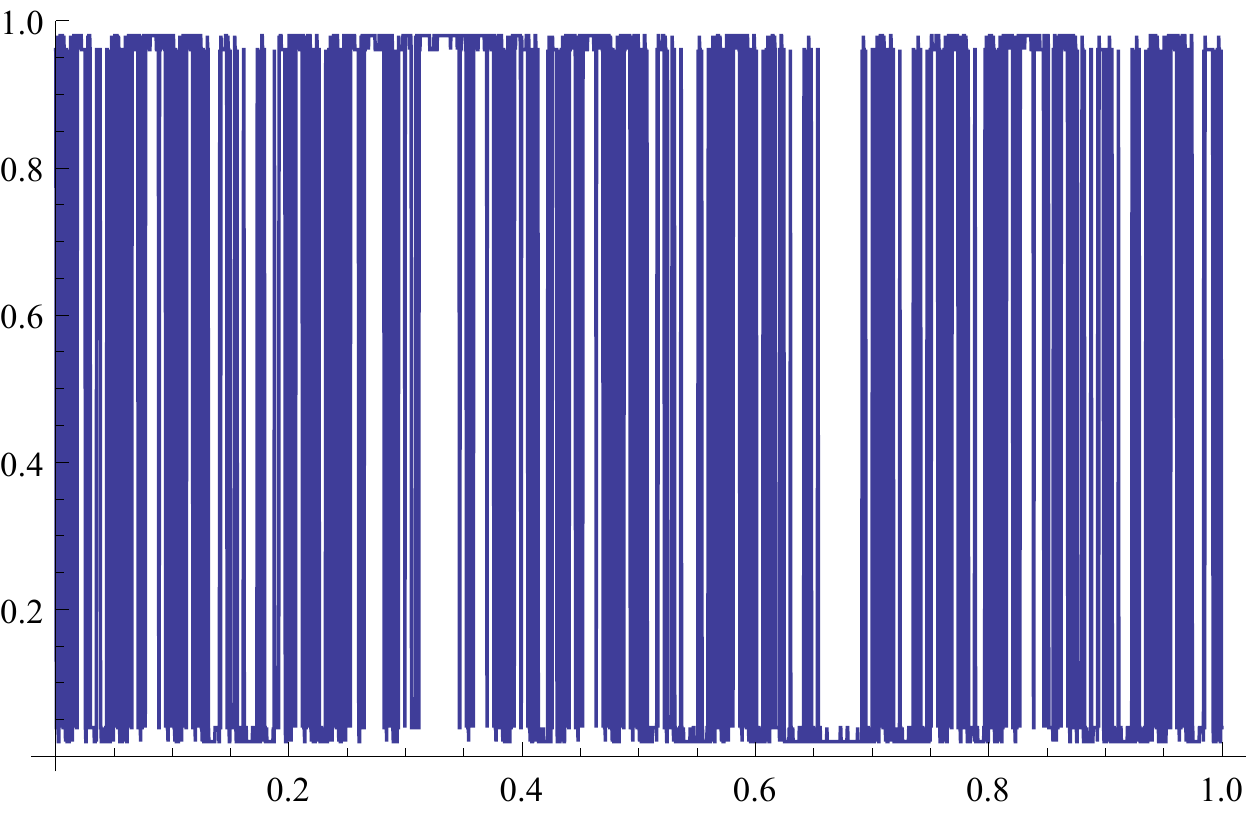}
\caption{ The graph of the values of the Jacobian $J^{12}$ at zero temperature  when $\cos (2 \, \theta) =0.96$. The maximum and minimum values are $p$ and $(1-p)$. Above the points on the interval are associated with points in $\{1,2\}^\mathbb{N}$ using the binary expansion with symbols $1$ and $2$. (by this we mean: on the binary expansion we associate $0$ to $1$, and, $1$ to $2$). We considered strings with $12$ symbols $k_0,k_1,...,k_{11}\in\,\{1,2\} $.
}
\end{center}
\end{figure}

The next proposition assures that when comparing $\theta$ and $\sigma(\theta)$ we have the alternative: there are a change of
the set (where they are), or, there are a change of the first coordinate.

\begin{proposition}
Given $\theta=(k_0,k_1,k_2,...)\in A\cup B$, the points $\theta$ and $\sigma(\theta)$ belong to the same set (both in $A$ or both in $B$), iff, $k_0\neq k_1$.
\end{proposition}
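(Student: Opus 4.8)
The plan is to use the continued-fraction recursion for the Jacobian together with the dichotomy that at zero temperature $J$ takes only the values $p$ and $1-p$ almost everywhere (Theorem~\ref{port}). Recall from Lemma~\ref{jexpansion} the functional equation $J(k_0,k_1,k_2,\dots)=a(k_0,k_1)+b(k_0,k_1)\,J(k_1,k_2,\dots)^{-1}$, and that at zero temperature $a(k_0,k_1)=0$, $b(k_0,k_1)=\gamma$ when $k_0=k_1$, while $a(k_0,k_1)=1$, $b(k_0,k_1)=-\gamma$ when $k_0\ne k_1$. Write $\theta=(k_0,k_1,k_2,\dots)$, set $y:=J(\sigma(\theta))=J(k_1,k_2,\dots)$, and note $y\in\{p,1-p\}$ by Theorem~\ref{port}; one checks directly that $\gamma/p=1-p$ and $\gamma/(1-p)=p$, since $p(1-p)=\frac{1-\beta_1^2}{4}=\gamma$.

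First I would treat the case $k_0=k_1$. Then $J(\theta)=\gamma/y$. If $y=p$, then $J(\theta)=\gamma/p=1-p$, so $J(\theta)\ne J(\sigma(\theta))$: the point has moved from $A$ to $B$. If $y=1-p$, then $J(\theta)=\gamma/(1-p)=p\ne 1-p=J(\sigma(\theta))$: again the point has moved from $B$ to $A$. Hence when $k_0=k_1$, $\theta$ and $\sigma(\theta)$ always lie in different sets. Next I would treat $k_0\ne k_1$. Then $J(\theta)=1-\gamma/y$. If $y=p$, then $J(\theta)=1-\gamma/p=1-(1-p)=p=J(\sigma(\theta))$. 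If $y=1-p$, then $J(\theta)=1-\gamma/(1-p)=1-p=J(\sigma(\theta))$. So when $k_0\ne k_1$, $\theta$ and $\sigma(\theta)$ always lie in the same set. Combining the two cases gives exactly the claimed equivalence: $\theta$ and $\sigma(\theta)$ belong to the same set if and only if $k_0\ne k_1$.

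One technical point I would be careful about is that Theorem~\ref{port} asserts $J(\sigma(\theta))\in\{p,1-p\}$ only for $\mu$-almost every $\theta$, whereas the proposition is stated for every $\theta\in A\cup B$. But this is not an obstacle: by the very definitions of $A$ and $B$ given just above the proposition, $\theta\in A\cup B$ means $J(\theta)$ exists and equals $p$ or $1-p$; and since $\sigma(A\cup B)\subseteq A\cup B$ is exactly what the computation above shows (the image values are again $p$ or $1-p$), we may legitimately substitute $y=J(\sigma(\theta))\in\{p,1-p\}$ in the recursion. The only genuine input needed is the functional equation of Lemma~\ref{jexpansion}, which holds pointwise wherever $J$ is defined, so the argument is valid for all $\theta\in A\cup B$. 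The main (and only) ``work'' is the arithmetic identity $p(1-p)=\gamma$, which I would verify once at the start; everything else is the four-line case check above.
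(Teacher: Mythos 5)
Your proof is correct and is essentially the paper's own argument: the paper likewise applies the recursion $J(k_0,k_1,\dots)=a(k_0,k_1)+b(k_0,k_1)/J(k_1,k_2,\dots)$ with the zero-temperature values $a\in\{0,1\}$, $b\in\{\gamma,-\gamma\}$ together with the identities $\gamma/p=1-p$, $1-\gamma/p=p$, $\gamma/(1-p)=p$, $1-\gamma/(1-p)=1-p$, organized as the same four-case check (phrased there as prepending a symbol to a point of $A$ or of $B$). The only small slip is in your closing remark: your computation shows that prepending a symbol maps $A\cup B$ into itself (i.e.\ $J(\sigma\theta)\in\{p,1-p\}$ implies $J(\theta)\in\{p,1-p\}$), not the forward invariance $\sigma(A\cup B)\subseteq A\cup B$ that you actually need to substitute $y=J(\sigma\theta)$; this point is equally implicit in the paper's proof and is repaired by solving the recursion for $J(\sigma\theta)=b(k_0,k_1)/(J(\theta)-a(k_0,k_1))$, which lies in $\{p,1-p\}$ whenever $J(\theta)$ does.
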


\textbf{Proof}
As $\gamma/p=1-p, \,1-\gamma/p=p, \,\gamma/(1-p)=p$ and $1-\gamma/(1-p)=1-p$, using the equation (\ref{cfg1}), we get:
\begin{itemize}
	\item $\text{ if }\theta=(k_0,k_1,k_2,...) \in A \text{ begins with }1, \text{ then }\, 2\theta \in A,\, 1\theta \in B,$
	\item $\text{ if }\theta=(k_0,k_1,k_2,...) \in A \text{ begins with }2, \text{ then }\, 1\theta \in A,\, 2\theta \in B,$
	\item $\text{ if }\tilde\theta=(k_0,k_1,k_2,...) \in B \text{  begins with }1, \text{  then }\, 1\tilde\theta \in A,\, 2\tilde\theta \in B,$
	\item $\text{if }\tilde{\theta}=(k_0,k_1,k_2,...) \in B \text{  begins with  }2,\text{  then}\, 2\tilde\theta \in A, \,1\tilde\theta \in B.$
\end{itemize}
%$$\text{ if }\theta=(k_0,k_1,k_2,...) \in A \text{ begins with }1, \text{ then }\, 2\theta \in A,\, 1\theta \in B,$$
%$$\text{ if }\theta=(k_0,k_1,k_2,...) \in A \text{  begins with }2, \text{  then } \,1\theta \in A, \,2\theta \in B,$$
%$$\text{ if }\tilde\theta=(k_0,k_1,k_2,...) \in B \text{  begins with }1, \text{  then }\, 1\tilde\theta \in A,\, 2\tilde\theta \in B,$$
%and
%$$\text{if }\tilde{\theta}=(k_0,k_1,k_2,...) \in B \text{  begins with  }2,\text{  then}\, 2\tilde\theta \in A, \,1\tilde\theta \in B.$$

\qed

\begin{theorem} The entropy of the quantum spin probability at zero temperature is
	$$ -p \log p - (1-p) \log (1-p).$$
\end{theorem}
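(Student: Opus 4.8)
The plan is to compute the entropy of $\mu$ via the general formula $h(\mu) = -\int \log J \, d\mu$ stated earlier in the excerpt, using the fact (Theorem \ref{port}) that at zero temperature $J$ takes only the two values $p$ and $1-p$ almost everywhere. Thus
$$h(\mu) = -\,\mu(A)\,\log p \;-\; \mu(B)\,\log(1-p),$$
where $A = \{J = p\}$ and $B = \{J = 1-p\}$ are the Borel sets introduced above. So the whole problem reduces to showing $\mu(A) = p$ and $\mu(B) = 1-p$.

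First I would exploit $\sigma$-invariance of $\mu$ together with the preceding Proposition, which tells us exactly how $A$ and $B$ interact with the shift and the first symbol. Decompose $A = A_1 \sqcup A_2$ according to whether the first coordinate is $1$ or $2$, and similarly $B = B_1 \sqcup B_2$. The Proposition's four bullet points say that $1A_1 \subset B$, $2A_1 \subset A$, $1A_2 \subset A$, $2A_2 \subset B$, $1B_1 \subset A$, $2B_1 \subset B$, $2B_2 \subset A$, $1B_2 \subset B$ (here $1X$ denotes $\{(1,k_0,k_1,\dots) : (k_0,k_1,\dots)\in X\}$). Since $\sigma^{-1}(A) = 1A \sqcup 2A$ where $1A = 1A_1 \sqcup 1A_2$ etc., invariance $\mu(\sigma^{-1}(A)) = \mu(A)$ gives a linear relation among $\mu(A_1),\mu(A_2),\mu(B_1),\mu(B_2)$; doing the same for $B$ gives more. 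One should also use the Jacobian/Ruelle-operator structure: for a set $X$ in the shift of the form $1X'$, $\mu(1X') = \int_{X'} J(1 \cdot) \, d\mu$, and on $A$, $B$ the value of $J$ on the two inverse branches is exactly $p$ or $1-p$ by the computation $\gamma/p = 1-p$, etc. Combining: e.g. $\mu(2A_1) = \int_{A_1}$ (Jacobian of branch ``prepend $2$'') $d\mu$, and on $A_1$ (points of $A$ starting with $1$) prepending $2$ keeps us in $A$ with Jacobian value $p$, so $\mu(2A_1) = p\,\mu(A_1)$, while $\mu(1A_1) = (1-p)\,\mu(A_1)$. Writing all eight such identities and feeding them into the two invariance equations $\mu(A) = \mu(1A) + \mu(2A)$ and $\mu(B) = \mu(1B) + \mu(2B)$, together with $\mu(A)+\mu(B) = 1$, should pin down $\mu(A) = p$.

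Concretely, I expect the invariance relation for $A$ to read $\mu(A) = \mu(1B_1) + \mu(1A_2) + \mu(2A_1) + \mu(2B_2)$ (the preimages landing in $A$), and each term rewrites via the appropriate Jacobian value: $\mu(1B_1) = p\,\mu(B_1)$, $\mu(1A_2) = p\,\mu(A_2)$, $\mu(2A_1) = p\,\mu(A_1)$, $\mu(2B_2) = p\,\mu(B_2)$, so $\mu(A) = p\,(\mu(A_1)+\mu(A_2)+\mu(B_1)+\mu(B_2)) = p$. That is the crux; once $\mu(A) = p$ and hence $\mu(B) = 1-p$, the entropy formula gives $h(\mu) = -p\log p - (1-p)\log(1-p)$ immediately.

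The main obstacle is making the bookkeeping of which inverse-branch carries which Jacobian value fully rigorous — i.e. justifying that $\mu(X') = \int_X g\,d\mu$ where $g$ is the relevant branch of the Jacobian, and that on the sets $A_i, B_i$ this $g$ really is constantly $p$ or $1-p$ rather than merely $\mu$-a.e. equal to one of the two limiting values. This requires care because $J$ is only defined a.e. at zero temperature (the probability is not mixing), so one must work with the recursion (\ref{cfg1}) and the explicit arithmetic identities $\gamma/p = 1-p$, $1 - \gamma/p = p$, $\gamma/(1-p) = p$, $1 - \gamma/(1-p) = 1-p$ to control the branch values, and invoke that $A,B$ are $\sigma$-invariant up to the symbol-prepending structure of the Proposition. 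A secondary subtlety is confirming $p = \frac{1+\beta_1}{2} = \frac{1+\sin(2\theta)}{2} \in (0,1)$ so that the logarithms make sense, which is immediate from $\theta \in (0,\pi/2)$, $\theta \neq \pi/4$.
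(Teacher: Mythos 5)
Your proposal is correct and takes essentially the same route as the paper: both reduce the theorem to showing $\mu(A)=p$, using the preceding Proposition (exactly one $\sigma$-preimage of each point lies in $A$) together with the fact that the Jacobian is constantly $p$ on $A$. The paper packages your four branch identities into the single statement that $\sigma|_A$ is an a.e. bijection onto $\{1,2\}^{\mathbb N}$ with Jacobian $p$, so that $1=\mu(\sigma(A))=\mu(A)/p$.
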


{\bf Proof:} The entropy of $\mu$ is given by
$h(\mu) =-\int \log J d \mu = -\log(p)\mu(A) - \log(1-p)\mu(B).$ It remains to prove that $\mu(A)=p$ and $\mu(B)=1-p$.

We have $\sigma(A)=\sigma(B)=A\cup B=\{1,2\}^{\mathbb N}$ almost everywhere and $\sigma|A$ and $\sigma|B$
are injective, then, since the Jacobian of $\mu$ in $A$ is $p$ and in $B$ is $1-p$, for any measurable sets
$X\subset A$, $Y\subset B$, we get
\begin{equation}\label{eq2}
\mu(\sigma(X))=\mu(X)/p \hspace{1cm} \, \text{and} \hspace{1cm} \, \mu(\sigma(Y)) = \mu(Y)/(1-p).
\end{equation}
Particularly,
\begin{equation}\label{muA}
1= \mu(A\cup B) = \mu(\sigma(A)) = \mu(A)/p.
\end{equation}
This shows that $\mu(A)=p$ and consequently $\mu(B)=1-p$.

\qed

We will present later an ergodic conjugacy of $(\sigma,\mu)$ with another dynamical system which is ``in some way'' related with $(\sigma, m_p)$ where $m_p$ is the independent Bernoulli probability $(p, 1-p)$.

\bigskip

Before we start the proof of  Theorem \ref{port} we need an auxiliary result. Part of it can be found  in the arXiv version 1505.01305
of \cite{LMMM}.

\begin{lemma}\label{1221}
	For any $n\geq 1$ and $k_1,...,k_n \in \{1,2\}$ we have
	\[ \frac{\mu(1,2,2,1,1,k_1,...,k_n)}{\mu(2,2,1,1,k_1,...,k_n)}=\frac{\mu(1,k_1,...,k_n)}{\mu(k_1,...,k_n)}\]
and
	\[ \frac{\mu(1,1,2,2,1,k_1,...,k_n)}{\mu(1,2,2,1,k_1,...,k_n)}=\frac{\mu(1,k_1,...,k_n)}{\mu(k_1,...,k_n)}.\]
Particularly,  $J$ is not defined in $(1,1,2,2)^\infty=(1,1,2,2,1,1,2,2,1,...).$
	A similar result is true if we permute the symbols $1$ and $2$.	
\end{lemma}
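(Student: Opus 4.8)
The plan is to reduce everything to the zero-temperature recursion recalled just before Lemma~\ref{jexpansion}, namely $\mu(k_0,k_1,w)=a(k_0,k_1)\,\mu(k_1,w)+b(k_0,k_1)\,\mu(w)$ for an arbitrary finite word $w$, which under Assumption~A specializes to $\mu(j,j,w)=\gamma\,\mu(w)$ and, for $i\neq j$, $\mu(i,j,w)=\mu(j,w)-\gamma\,\mu(w)$, where $\gamma=\tfrac14(1-\beta_1^2)\in(0,\tfrac14)$ (the hypothesis $\theta\neq 0,\pi/4,\pi/2$ is exactly what keeps $\gamma$ strictly inside $(0,\tfrac14)$). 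Both displayed identities then follow by peeling symbols off the left, a finite computation, with the base cases $\mu(\emptyset)=1$, $\mu(j)=\tfrac12$.

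For the first identity I would compute the denominator by two ``equal-pair'' steps, $\mu(2,2,1,1,w)=\gamma\,\mu(1,1,w)=\gamma^2\,\mu(w)$, and the numerator by one ``unequal'' step followed by substitution: $\mu(1,2,2,1,1,w)=\mu(2,2,1,1,w)-\gamma\,\mu(2,1,1,w)$ with $\mu(2,1,1,w)=\mu(1,1,w)-\gamma\,\mu(1,w)=\gamma\,\mu(w)-\gamma\,\mu(1,w)$, so $\mu(1,2,2,1,1,w)=\gamma^2\,\mu(w)-\gamma(\gamma\,\mu(w)-\gamma\,\mu(1,w))=\gamma^2\,\mu(1,w)$; dividing gives $\mu(1,w)/\mu(w)$. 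For the second identity the same telescoping gives $\mu(1,2,2,1,w)=\gamma^2\,\mu(w)$ and $\mu(1,1,2,2,1,w)=\gamma\,\mu(2,2,1,w)=\gamma^2\,\mu(1,w)$, and again the ratio is $\mu(1,w)/\mu(w)$. The version with the symbols $1$ and $2$ interchanged is literally the same computation, since the recursion is invariant under swapping $1\leftrightarrow 2$ (this only interchanges $a(1,1)\leftrightarrow a(2,2)$ and $b(1,1)\leftrightarrow b(2,2)$, which are equal, and likewise in the off-diagonal case).

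For the last assertion I would set $\theta=(1,1,2,2)^\infty$, which is $\sigma$-periodic of period $4$, and look at $J^n(\theta)=\mu(\theta_0,\dots,\theta_n)/\mu(\theta_1,\dots,\theta_n)$. For $n\ge 4$ the numerator has prefix $(1,1,2,2,1)$ and the denominator has prefix $(1,2,2,1)$, both followed by the same block $w=(\theta_5,\dots,\theta_n)$; by periodicity $w=(\theta_1,\dots,\theta_{n-4})$ and $(1,w)=(\theta_0,\dots,\theta_{n-4})$, so the second identity gives $J^n(\theta)=\mu(1,w)/\mu(w)=J^{n-4}(\theta)$. Hence the subsequence $(J^{4k+1}(\theta))_k$ is constant equal to $J^1(\theta)=\mu(1,1)/\mu(1)=2\gamma$ while $(J^{4k+2}(\theta))_k$ is constant equal to $J^2(\theta)=\mu(1,1,2)/\mu(1,2)=\gamma/(1-2\gamma)$; since $\gamma\neq\tfrac14$ these two values differ, so $\lim_n J^n(\theta)$ does not exist and $J$ is undefined at $(1,1,2,2)^\infty$.

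I do not expect a genuine obstacle here: the whole lemma is bookkeeping with the recursion. The only points requiring care are handling the base cases of the recursion so the telescoping is valid for short (possibly empty) words $w$, and checking that the standing hypotheses on $\theta$ are exactly what places $\gamma$ in the open interval $(0,\tfrac14)$, which is what makes $2\gamma\neq\gamma/(1-2\gamma)$ and thus forces the divergence.
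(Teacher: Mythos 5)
Your proposal is correct and follows essentially the same route as the paper: both prove the two identities by unwinding the zero-temperature recursion $\mu(k_0,k_1,w)=a(k_0,k_1)\mu(k_1,w)+b(k_0,k_1)\mu(w)$ four steps from the left (the paper writes this as a four-level continued fraction, you write it as a telescoping substitution), and both deduce non-existence of $J$ at $(1,1,2,2)^\infty$ by exhibiting two constant subsequences of $J^n$ with distinct values. The only cosmetic difference is the choice of subsequences (the paper uses $J^{4k}=1/2$ and $J^{4k+1}=2\gamma$, you use $J^{4k+1}=2\gamma$ and $J^{4k+2}=\gamma/(1-2\gamma)$), and your justification that $\gamma\in(0,1/4)$ forces these to differ is exactly the point needed.
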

\begin{proof}
From (\ref{eq1}) we get
	\[\frac{\mu(1,2,2,1,1,k_1,...,k_n)}{\mu(2,2,1,1,k_1,...,k_n)}=1-\gamma\frac{1}{0+\gamma\frac{1}{1-\gamma\frac{1}{0+\gamma \frac{1}{\frac{\mu(1,k_1,...,k_n)}{\mu(k_1,...,k_n)}}  }}}= \frac{\mu(1,k_1,...,k_n)}{\mu(k_1,...,k_n)}\]
	and
\[\frac{\mu(1,1,2,2,1,k_1,...,k_n)}{\mu(1,2,2,1,k_1,...,k_n)} = \gamma \frac{1}{1- \gamma \frac{1}{\gamma\frac{1}{1-\gamma\frac{1}{\frac{\mu(1,k_1,...,k_n)}{\mu(k_1,...,k_n)}}}   }   } = \frac{\mu(1,k_1,...,k_n)}{\mu(k_1,...,k_n)}.	\]
Particularly,
\[\frac{\mu(1,1,2,2,...,1,1,2,2,1,1)}{\mu(1,2,2,...,1,1,2,2,1,1)}=\frac{\mu(1,1)}{\mu(1)}=\frac{\gamma}{2}\]
and
\[\frac{\mu(1,1,2,2,...,1,1,2,2,1)}{\mu(1,2,2,...,1,1,2,2,1)}=\mu(1)=\frac{1}{2}.\]
This proves that $J$ is not defined in $(1,1,2,2)^\infty$.

These computations can be applying permuting the symbols $1$ and $2$.
\end{proof}

A slightly  different version of the next proof can be found  in the arXiv version of \cite{LMMM}.

\bigskip

\noindent
{\bf Proof of Theorem \ref{port}:}
	%Remember that
	
	%a) if $k_0=k_1$, then $a(k_0,k_1)=0$ and $b(k_0,k_1)=\gamma=\frac{1}{4} (1-\beta_1^2)$
	
	%b) if $k_0\neq k_1$, then $a(k_0,k_1)=1$ and $b(k_0,k_1)=-\gamma=\frac{1}{4} (\beta_1^2-1)$.
	
	\noindent
	%In this way
	The value of $J$ does not change if we permute $1$ and $2$ in the sequence $(k_0,k_1,...)\in \{1,2\}^\mathbb{N}$. For example
	\[J(1,1,1,2,2,1,...)=J(2,2,2,1,1,2,...)\]
	(if the limit exists).
	Therefore, we will introduce another code.
	%If $k_0=k_1$ then we associated  the symbol $a$ and  if $k_0\neq k_1$ we associate the symbol $b$.
	For each given sequence $k\in \{1,2\}^\mathbb{N}$, $k=(k_0,k_1,k_2,...)$ we associate a new sequence $m=m(k)=(m_0,m_1,m_2,..)\in\{a,b\}^\mathbb{N}$ by the rule: $m_i=a$ if $k_i=k_{i+1}$ and $m_i=b$ if $k_i\neq k_{i+1}$. So, we are looking if there is a change, or not, in the string $k$ by
	using the rules
	$ \underbrace{11}_a$\,, \,$ \underbrace{22}_a$\,,\,$ \underbrace{12}_b$\,,\, $ \underbrace{21}_b$.
	
	For example, given a sequence $k$ of the form
	$$ k=(1,2,1,1,2,2,... ),$$
	then, we associate
	$m=(b,b,a,b,..).$ Clearly, we can consider $J$ defined over  $\{a,b\}^\mathbb{N}$, from $J(m(k)):=J(k)$.
	\bigskip

	%Given $m$ we can find  the $k=(k_0,k_1,k_2,...)$  to whom $m$ it is associated if we know the first element $k_0=1$ or $k_0=2$.

	\medskip
	
	It can be checked that:
	\begin{align*}
	J(a,a,m_3,..) &= J(m_3,m_4,..)\\
	J(a,b,m_3,..) &= \frac{1}{ \gamma^{-1}- \frac{1}{   J(m_3,m_4,..)}}\\
	J(b,a,m_3,..) &= 1- J(m_3,m_4,..)\\
		J(b,b,m_3,..) &= 1 - \frac{1}{ \gamma^{-1}- \frac{1}{   J(m_3,m_4,..)}}.
	\end{align*}

	\medskip
	
	%For example, in terms of sequences $k$ the item
	
	%a) If $m_1=a,m_2=a$, then $J(m_1,m_2,m_3,..) = J(m_3,m_4,..)$ means
	%\medskip
	
	%I) $J(1,1,1,k_3,..) = J(1,k_3,..)$ and
	
	%II) $J(2,2,2,k_3,..) = J(2,k_3,..)$

	%\medskip

	%In this way if $k=(a_0,a_2,a_3,..) \in \{1,2\}^\mathbb{N}$ is an element where the fraction expansion $J(k)$ limit exists, then this value
	%does not change if we delete $00$ (respectively, $11$) of the sequence $k$ where appears $000$ (respectively, $111$). Indeed, this corresponds to $m_1=a=m_2$. In other words, this corresponds to a sequence $a\,a$.
		From Proposition 4.2 in \cite{LMMM}	we obtain that for $m=(m_1,...,m_k,a,a,a...)$,  $J(m)$ is not defined. For the other sequences,  the finite strings with $(aa)^{n}$, $n\in \mathbb{N}$, can be deleted (when $J$ converges). That is,
	\[J(m_0,m_1,...,m_j,a,a,m_{j+2},...)=J(m_0,m_1,...,m_j,m_{j+2},...). \]

Consider the letters of $m$ arranged in blocks of length 2,
	$$m=([m_1,m_2],[m_3,m_4],[m_5,m_6],...).$$
	As we are interested in the value of $J$, we can assume that no blocks have the form $[a,a]$ (we can delete them) and also that no blocks have the form $[b,b]$, because we can replace this one for the pair of blocks $[b,a],[a,b]$. %The same occur if $m =(m_1,...,m_k,(abab)^{\infty})$ or $m=(m_1,...,m_k,(baba)^{\infty})$.
	%This means  when considering $abab$ we get for example:
	%$$ J(1,1,2,2,1,k_5..)=J(1,k_5..),$$
	%that is we can delete $1,1,2,2$.
	%For example
	%$$ J(a,b,a,b,m_5,m_6,..) = J(m_5,m_6,..),$$
	%$$ J(a,b,a,b,a,b,a,b,m_9,m_{10},..) = J(m_9,m_{10},..),$$
	%$$ J(b,a,b,a,m_5,m_6,..) = J(m_5,m_6,..),$$
	%and
	%$$ J(m_1,m_2,a,b,a,b,m_7,m_8,..) = J( m_1,m_2,m_7,m_8,..).$$

	From now on it is natural to consider one level up of symbolic representation. We get a new code introducing a new dictionary where we associate $\alpha=[a,b]=a,b$ and $\beta=[b,a]=b,a$. %As we have seen, we can substitute $aa$ by $\emptyset$.
	%One can also substitute
	%$$bb  \,\,\,\text{by} \, \,\, \beta\,\alpha= baab.$$
	In this way, for $m=(m_0,m_1,m_2,...)=([m_0,m_1],[m_2,m_3],...)$ we associate $w=(w_0,w_1,w_2,...)$, where $w_i=\alpha$, if $[m_{2i},m_{2i+1}]=[a,b]$, and $w_i=\beta$, if $[m_{2i},m_{2i+1}]=[b,a]$.
	%So we can divide an $m$ string in blocks of two letters and make the above substitutions, obtaining
	%a new string $w=(w_1,w_2,w_3,...)\in \{\alpha,\beta\}^\mathbb{N}.$
	
	%For example we associate
	
	%$$m =([a,b],[b,a],[b,a],[a,b],...)$$ to
	%$$ w=( \alpha,\beta,\beta,\alpha,...),$$
	
	%$$m =(a,b,a,b,a,b,b,a,...)$$ to
	%$$ w=( \alpha,\alpha,\alpha,\beta,...),$$
	%and
	
	%$$m =([a,b],[a,a],[b,b],[a,b],...)$$ to
	%$$ w=( \alpha,\beta,\alpha,\alpha,...).$$
	We need to study the possible values of $J$ over $\{\alpha,\beta\}^{\mathbb{N}}$.
	First we remark that the strings $\alpha,\alpha$ and $\beta, \beta$ in $w$ correspond to the strings $a,b,a,b$ and $b,a,b,a$ in $m$, which can be deleted without changes of the value of $J$.  Indeed, as a consequence of Lemma \ref{1221} we get
	\[J(a,b,a,b,m_5,m_6,...)=J(b,a,b,a,m_5,m_6,...)=J(m_5,m_6,m_7,...).\]
	%From Lemma \ref{1221} we get that $J$ is not defined if $w=(w_1,...,w_k,\alpha,\alpha,\alpha,...)$. For any other $w$ there is some sequence $w'$ with the same value for $J$ (if it exists) and such that $w'$ does not contain $\alpha,\alpha$. After this step, the same argument can be applied for the strings $\beta,\beta$ in $w'$. The conclusion is that for any sequence where $J$ is well defined, it is equal to $J(\alpha,\beta,\alpha,\beta,\alpha,...)$ or $J(\beta,\alpha,\beta,\alpha,\beta,...)$. (ESTA CONCLUSAO EH PERIGOSA PORQUE  $J$ NAO EH CONTINUA. )
	
	%
	%Then we can suppose that  By considering the compositions of the corresponding functions, one can substitute (delete, indeed) parts of $w$ strings
	%
	
	%$$\beta\,\beta= baba  \,\,\,\text{by } \,\emptyset $$

	%$$ \alpha\,\alpha=abab  \,\,\,\text{by } \,\emptyset. $$

	%We present some examples in order to illustrate the code:  from the rules about $baba$ and $abab$ we get that in the case $J$ converges
	%$$J( \alpha,\alpha,\alpha,\beta,w_5...)= J(\alpha,\beta,w_5,...),$$
	
	%$$J( \alpha,\alpha,\alpha,\alpha,\beta,w_6,...)= J(\beta,w_6,...),$$
	
	%$$J( \beta,\beta,\beta,\beta,\alpha,w_6,...)= J(\alpha,w_6,...)$$
	%and
	%$$J(\beta,\alpha, \alpha,\alpha,\alpha,\alpha,\beta,w_8,...)= J(\beta,\alpha, \beta, w_8,...).$$

	In the finite fraction expansion of odd order of $k$ (which is associated to a certain string $m$ of even order and so to a string $w$) we can delete parts (in the $\alpha,\beta$ dictionary expansion)
	in such a way that we end up with the estimation of $J$ in a string $w$ of one of the kinds:
$(\alpha\,\beta)^n$, $(\alpha\,\beta)^n\alpha$,  $(\beta\,\alpha)^n$ or $(\beta\,\alpha)^n\beta$.
	
\medskip
	From now on, we use the above conclusion in order to determine what are the possible values of $J$. Consider the transformation
	
	$$ x\,\to\,f_1(x)= 1- \frac{1}{\gamma^{-1} - \frac {1}{x}}.$$
	The string $\beta \alpha$ means $baab$, which corresponds to
	$$J(w_1,w_2,w_3,..) = 1 - \frac{1}{ \gamma^{-1}- \frac{1}{   J(w_3,w_4,..)}}.$$
	
	Note that if the expansion $J(w_3,w_4,..)$ exists for the string $(w_3,w_4,..)$, then it also exists the one for $J(w_1,w_2,w_3,..) $.
	In this way $f_1(J (w_3,w_4,..))= J(w_1,w_2,w_3,..) .$
	
	\medskip

	Consider now $f_2$ defined by
	
	$$ x \,\to\, f_2(x)=\frac{1}{\gamma^{-1}-\frac {1}{1-x}}\,\,.$$
	The string $ \alpha\, \beta$ means $abba$, that is, it corresponds to
	$$J(w_1,w_2,w_3,..) = \frac{1}{\gamma^{-1}-\frac {1}{1-J(w_3,w_4,..)}}.$$
		In this way $f_2(J(w_3,w_4,..))= J(w_1,w_2,w_3,..).$
	
	\medskip
	
	Note that the fixed points for both functions $f_1(x)= 1- \frac{1}{\gamma^{-1} - \frac {1}{x}}$ and
	$f_2(x)=\frac{1}{\gamma^{-1}-\frac {1}{1-x}}$ are the same:  $p=\frac{1+\beta_1}{2}$ and $1-p=\frac{1-\beta_1}{2}$ (it's helpful to observe also that $\gamma=p(1-p)$ ).
	Furthermore, the interval $[1-p,p]$ is invariant by  $f_1$ and also by $f_2$. The point $p$ is a global attractor for $f_1$ in $(1-p,p]$ and $1-p$ is a global attractor for $f_2$ in $[1-p,p)$.
	
	As we have seen in Lemma \ref{jexpansion} it is natural to truncate $J(k_0,k_1,k_2,\dots)$ (at level $r$ for instance) by taking in the last position $r$, in the expansion of $J$, the value $1/2$. As $1/2  \in [1-p,p]$ (in fact is its center), and the interval $[1-p,p]$ is left invariant by the diffeomorphisms $g_{a}(x)=\frac{\gamma}{x}$ and $g_{b}(x)=1-\frac{\gamma}{x}$, when the limit exists the successive truncations should converge to $p$ or to $1-p$.
	
	Then, the only possible (convergent) values attained by the continuous fraction expansion of $J$ are $p$ or  $1-p$.
	\medskip
	
\qed

\medskip

\begin{proposition}
There exists a map
$$h : (\{1,2\}^{\mathbb{N}},\sigma,\mu )	\to (\{0,1\}^{\mathbb{N}},\sigma, \mu_p),$$
which is probability preserving, where $\mu_p$ is the Bernoulli independent probability associated to $(p,1-p)$. However, $(\{1,2\}^{\mathbb{N}},\sigma,\mu ) $ and $(\{0,1\}^{\mathbb{N}},\sigma, \mu_p)$ are not ergodically equivalent.

\end{proposition}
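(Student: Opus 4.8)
The plan is to construct $h$ explicitly from the continued-fraction / symbolic analysis already developed for $\mu$ at zero temperature, and then to exhibit a structural obstruction to ergodic equivalence. First I would recall from the proof of Theorem \ref{port} that $J$ takes only the two values $p$ and $1-p$ (almost everywhere), partitioning $\{1,2\}^{\mathbb N}$ into the Borel sets $A=\{J=p\}$ and $B=\{J=1-p\}$, and that $\mu(A)=p$, $\mu(B)=1-p$. The natural candidate is the coding $h(k)=(w_0,w_1,\dots)\in\{0,1\}^{\mathbb N}$ defined by $w_i=0$ if $\sigma^i(k)\in A$ and $w_i=1$ if $\sigma^i(k)\in B$; equivalently, using the Proposition above, one reads off at each step whether $\sigma^i(k)$ and $\sigma^{i+1}(k)$ lie in the same set. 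I would then check that $h\circ\sigma=\sigma\circ h$ directly from the definition, so $h$ intertwines the two shifts.

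Next I would verify that $h_*\mu=\mu_p$. The key input is equation (\ref{eq2}): on $A$ the Jacobian of $\sigma$ is $p$ and on $B$ it is $1-p$, and $\sigma|_A$, $\sigma|_B$ are injective with full image. Feeding these into the formula for the $h$-image of a cylinder $[w_0,\dots,w_{n-1}]$ in $\{0,1\}^{\mathbb N}$ — which pulls back to an intersection $\bigcap_{i=0}^{n-1}\sigma^{-i}C_{w_i}$ with $C_0=A$, $C_1=B$ — and using the change-of-variables formula along the inverse branches determined by the $w_i$, one obtains inductively $\mu(h^{-1}[w_0,\dots,w_{n-1}])=\prod_{i=0}^{n-1}q_{w_i}$ with $q_0=p$, $q_1=1-p$. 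Since cylinders generate the Borel $\sigma$-algebra, this gives $h_*\mu=\mu_p$, so $h$ is probability preserving.

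For the non-equivalence, the point is that $(\{0,1\}^{\mathbb N},\sigma,\mu_p)$ is mixing (it is Bernoulli), while $(\{1,2\}^{\mathbb N},\sigma,\mu)$ is not mixing — indeed it is stated in section \ref{zero} that $\mu$ is ergodic but not mixing, and mixing is an invariant of ergodic (measure-theoretic) equivalence. Hence no invertible measure-preserving conjugacy can exist between the two systems, even though the factor map $h$ exists. I would phrase this as: an ergodic equivalence would transport the mixing property of $\mu_p$ back to $\mu$, contradicting the (already established) failure of mixing for $\mu$. Concretely, the failure of mixing for $\mu$ can be witnessed by the string $(1,1,2,2)^\infty$ and its neighbourhood: the $4$-periodic combinatorics in Lemma \ref{1221} show that certain cylinder correlations $\mu(C\cap\sigma^{-n}D)$ do not converge to $\mu(C)\mu(D)$ as $n\to\infty$, so the map $h$ genuinely collapses dynamical information. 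The main obstacle is the bookkeeping in the second step: one must track precisely which inverse branch of $\sigma$ each symbol $w_i$ selects (the Proposition tells us the two preimages of a point in $A$ split one into $A$ and one into $B$, and likewise for $B$), and then apply (\ref{eq2}) with the correct Jacobian factor at each stage; getting the product formula $\prod q_{w_i}$ exactly right — rather than off by a reshuffling of $p$ and $1-p$ — is where care is needed, but it is a finite induction with no analytic difficulty once the branch structure from the Proposition is in hand.
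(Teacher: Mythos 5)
Your proposal is correct and follows essentially the same route as the paper: the same coding $h$ via the partition $\{A,B\}$, the same inductive verification of $h_*\mu=\mu_p$ on cylinders using the Jacobian identities $\mu(\sigma(X))=\mu(X)/p$ on $A$ and $\mu(\sigma(Y))=\mu(Y)/(1-p)$ on $B$, and the same obstruction to equivalence, namely that $\mu_p$ is mixing while $\mu$ is not. The paper additionally records the concrete failure of injectivity $h(\theta)=h(\theta^*)$, but that is supplementary to the mixing argument you give.
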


\begin{proof}
%Now  we can define a measurable map $h$ preserving probabilities such that is defined for $\mu$-almost every point of $\{1,2\}^{\mathbb N}$

We set  $h(\theta)=(c_0,c_1,c_2,...)$, where $c_j=0$, if $\sigma^j(\theta)\in A$, and $c_j=1$, if $\sigma^j(\theta)\in B$.
This map $h$ is defined for $\mu$-almost every point of $\{1,2\}^{\mathbb N}$ and we may to extend $h$ to $\{1,2\}^{\mathbb N}\setminus (A\cup B)$ as $(0,0,0,\dots)$. $h:(\{1,2\}^{\mathbb N},\mu)\to (\{0,1\}^{\mathbb N},\mu_p)$, where $\mu_p$ is the  Bernoulli independent probability associated to $p$ for $0$, and $1-p$ for $1$. Clearly $h\circ \sigma = \sigma \circ h$.

By the above properties of $A$ and $B$, we have the following expression for $h$: if $\alpha=(a_0,a_1,a_2,a_3,\dots)\in \{1,2\}^{\mathbb N}$, let $c_0=\chi_B(\alpha)$; then $h(\alpha)=(c_0,c_0+a_0+a_1+1\pmod 2,c_0+a_0+a_2\pmod 2,c_0+a_0+a_3+1\pmod 2,\dots).$

In order to show that $h$ is a measurable map we observe that for any cylinder set $[c_0,...,c_n] \subset \{0,1\}^{\mathbb{N}}$ we have
\[h^{-1}([c_0,...,c_n]) = h^{-1}([c_0])\cap h^{-1}(\sigma^{-1}[c_1] )\cap...\cap h^{-1}(\sigma^{-n}([c_n]))\]
\[ =h^{-1}([c_0])\cap \sigma^{-1}(h^{-1}([c_1] )\cap...\cap\sigma^{-n}( h^{-1}([c_n])), \]
which is a Borel set because, for each $i \in\{0,...,n\}$,  $h^{-1}([c_i])$ is  $A$ or $B$.

Now we will show that $\mu_p([c_0,...,c_n])= \mu(h^{-1}([c_0,...,c_n]))$, for any cylinder set $[c_0,c_1,...,c_n] \subset \{0,1\}^{\mathbb{N}}$. The proof is by induction.
For cylinders of length 1 we have,
\[  \mu_p([0]) = p  \stackrel{(\ref{muA})}{=}  \mu(A) = \mu(h^{-1}([0]))\,\,\, and \,\,\, \mu_p([1]) = 1-p = \mu(B) = \mu(h^{-1}([1])). \]
From now on we suppose that for any cylinder of length $n$ the claim is satisfied. Given a cylinder of length $n+1$ in the form $[0,c_1,...,c_n, c_{n+1}]$ we have
\[h^{-1}([0,c_1,...,c_n, c_{n+1}] ) = h^{-1}([0]\cap \sigma^{-1}([c_1,...,c_n])) = A \cap h^{-1}(\sigma^{-1}([c_1,...,c_n]))\]
\[ = A \cap \sigma^{-1}\circ h^{-1}([c_1,...,c_n])),   \]
and,  from equation (\ref{eq2})
\[\mu(h^{-1}([0,c_1,...,c_n, c_{n+1}] )) = \mu(A\cap \sigma^{-1}\circ h^{-1}([c_1,...,c_n]))\]
\[\stackrel{(\ref{eq2})}{=}  \mu(\sigma(A)\cap  h^{-1}([c_1,...,c_n]))) \cdot p
= \mu(h^{-1}([c_1,...,c_n]))) \cdot p = \mu_p([c_1,...,c_n])\cdot p\]
\[=\mu_p([0,c_1,...,c_n]). \]
The same kind of computations can be applied for a cylinder of the form $[1,c_1,...,c_n]$, which concludes the proof of the claim. The Kolmogorov extension theorem can be used to extend the result for any Borel set $X\subset \{0,1\}^{\mathbb{N}}$.

The two systems are not ergodically equivalent because the independent
Bernoulli system is mixing.

\end{proof}

If $\alpha=(a_0,a_1,a_2,a_3,\dots)\in \{1,2\}^{\mathbb N}$ and $c_0:=\chi_B(\alpha)$; then $h(\alpha)=(c_0,c_0+a_0+a_1+1\pmod 2,c_0+a_0+a_2\pmod 2,c_0+a_0+a_3+1\pmod 2,\dots).$  	
As an example, note that the restriction of  $h$ to $A\cap (\{1\}\times \{1,2\}^{{\mathbb N}^*})$ is given by

$$h(1,a_1,a_2,a_3,a_4,\dots)=$$
$$(0,a_1 \pmod 2,1+a_2\pmod 2,a_3\pmod 2,1+a_4\pmod 2,\dots)=$$
$$(0,2-a_1,a_2-1,2-a_3,a_4-1,\dots),$$
and, therefore is an homeomorphism onto its image $\{0\}\times\{0,1\}^{\mathbb{N}^*}$, with inverse map given by
$$(h|_{A\cap (\{1\}\times \{1,2\}^{{\mathbb N}^*})})^{-1}(0,c_1,c_2,c_3,c_4,\dots)=(1,2-c_1,c_2+1,2-c_3,c_4+1,\dots).$$
%Consider the homeomorphism $S:\{0\}\times\{0,1\}^{\mathbb N^*}\to \{1\}\times\{1,2\}^{\mathbb N^*}$ given by $S(0,c_1,c_2,c_3,c_4,\dots)=(1,2-c_1,c_2+1,2-c_3,c_4+1,\dots)$.
Therefore, $h(A\cap (\{1\}\times \{1,2\}^{\mathbb N^*}))$ is a Borel set. The same kind of argument can be
applied for $h(A\cap (\{2\}\times \{1,2\}^{\mathbb N^*})), h(B\cap (\{1\}\times \{1,2\}^{\mathbb N^*})) $
and $h(B\cap (\{2\}\times \{1,2\}^{\mathbb N^*}))$, which proves that $h(A\cup B)$ is a Borel set.    The image of  $h$ has full measure for $\mu_p$, because
\[ \mu_p (h(A\cup B)) = \mu(h^{-1}(h(A\cup B))) \geq \mu(A\cup B)=1.\]

This map $h$ is not an ergodic equivalence between $(\{1,2\}^{\mathbb N},\sigma,\mu)$ and
$(\{0,1\}^{\mathbb N},\sigma, \mu_p)$.  Otherwise, would be essentially a bijection,
that is, it will exist subsets of zero measure  $X$ of $(\{1,2\}^{\mathbb N},\mu)$ and $Y$ of
$(\{0,1\}^{\mathbb N},\mu_p)$, such that, $h$ restricted  to $\{1,2\}^{\mathbb N}\setminus X$
is a bijection with $\{0,1\}^{\mathbb N}\setminus Y$. This is not true in this case, because
$\theta \in A$, if and only if, $\theta^* \in A$,
where, if $\theta=(a_0, a_1, a_2,...)\in \{1,2\}^{\mathbb N}$, $\theta^*:=(3-a_0,3-a_1,3-a_2,...)$.
Therefore, $h(\theta)=h(\theta^*)$, for all $\theta\in \{1,2\}^{\mathbb N}$.

	As $h(\theta)=h(\theta^*)$ for all $\theta\in\{1,2\}^{\mathbb N}$, we get $h(\{1\}\times
\{1,2\}^{{\mathbb N}^*})=h(\{2\}\times \{1,2\}^{{\mathbb N}^*})=h(\{1,2\}^{\mathbb N})$.
On the other hand, $h(\theta)=h(\theta')$, if and only if,  $\theta'=\theta$ or $\theta'=\theta^*$.
Indeed, suppose that the first term of $\theta$ coincides with the first of $\theta'$.
As $h(\theta)=h(\theta')$, $\theta$ belongs to $A$, if and only if,  $\theta'$ belongs to $A$,
and $\sigma(\theta)$ belongs to $A$, if and only if,  $\sigma(\theta')$ also belongs to a $A$.
Therefore,  by the properties already discussed for the sets $A$ and $B$, the second terms of
$\theta$ e $\theta'$ coincide. By exchanging  $\theta$ and $\theta'$
by $\sigma(\theta)$ and $\sigma(\theta')$, we can show by
induction (using the equality $h(\sigma(\theta))=h(\sigma(\theta'))$)
that all terms of  $\theta$ and $\theta'$ coincide, that is,
we get $\theta=\theta'$.
If the first term of $\theta$ does not coincide with the first of $\theta'$,
then it coincides with the first term of $\theta^*$ and the same argument shows that in this case $\theta'=\theta^{*}$.

   \begin{proposition}
   	
   	There exists an ergodic equivalence $H$ between the shift
   	acting on  $(\{1,2\}^{\mathbb N},\mu)$ and a
   certain transformation $T$ acting in an invariant
   way on $(\{0,1\} \times
   \{0,1\}^{\mathbb N},\mu_0\times \mu_p)$, where
   $\mu_0$ is the uniform probability on  $\{0,1\}$,
   that is, such that, $\{0\}$ e $\{1\}$
   have both measure $1/2$.

   \end{proposition}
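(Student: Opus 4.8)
The plan is to build the ergodic equivalence $H$ by upgrading the probability-preserving map $h$ of the previous proposition into a genuine isomorphism, adding exactly the coordinate that records which of the two preimages $\theta$ or $\theta^*$ one is looking at. Recall we showed $h(\theta)=h(\theta')$ if and only if $\theta'=\theta$ or $\theta'=\theta^*$, where $\theta^*=(3-a_0,3-a_1,\dots)$; thus $h$ is exactly two-to-one off a null set. The natural extra bit is $d_0(\theta):=a_0-1\in\{0,1\}$ (or equivalently $\chi_{\{2\}}(a_0)$), which distinguishes $\theta$ from $\theta^*$. So I would define
$$ H:\{1,2\}^{\mathbb N}\to \{0,1\}\times\{0,1\}^{\mathbb N},\qquad H(\theta)=(\,d_0(\theta)\,,\,h(\theta)\,),$$
defined $\mu$-a.e. (on $A\cup B$), and check it is a Borel isomorphism onto a full-measure set, with inverse reconstructing $a_0$ from the first coordinate and then $a_1,a_2,\dots$ from $h(\theta)$ via the explicit formulas already displayed for $h$ on $A\cap(\{1\}\times\{1,2\}^{{\mathbb N}^*})$ and its three siblings.

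Next I would push the shift $\sigma$ through $H$ to identify the target dynamics. Since $h\circ\sigma=\sigma\circ h$, the second-coordinate action of the conjugated map is just the shift $\sigma$ on $\{0,1\}^{\mathbb N}$. For the first coordinate, note that $d_0(\sigma\theta)=a_1-1$, and from the defining relation of $h$ (the parity formula $h(\alpha)=(c_0,c_0+a_0+a_1+1,\dots)$) one reads off $a_1-1$ as a function of $d_0(\theta)=a_0-1$ and the first two coordinates of $h(\theta)$. A short computation should give $d_0(\sigma\theta)=1-d_0(\theta)$ modulo a correction determined by $h(\theta)$; if one sets things up with the right convention it collapses to $d_0(\sigma\theta)=1-d_0(\theta)$ exactly, matching the claimed $T(c_0,c_1,\dots)=(1-c_0,\sigma(c_1,c_2,\dots))$. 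This is the step I would be most careful about: getting the parities and the $3-a_i$ substitutions consistent so that the first coordinate genuinely evolves by $c_0\mapsto 1-c_0$ independently of the rest, rather than by something that only looks like it after a further relabelling.

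Then I would verify $H$ intertwines the measures: $H_*\mu=\mu_0\times\mu_p$. We already know the second marginal is $\mu_p$ from the previous proposition. For the joint law, the key input is the symmetry $\theta\in A\iff\theta^*\in A$ together with the fact that $\sigma|_A$ and $\sigma|_B$ have Jacobians $p$ and $1-p$; these give, for a cylinder $[c_0,\dots,c_n]$ in the $h$-coordinate, that $\mu(h^{-1}[c_0,\dots,c_n]\cap\{a_0=1\})=\tfrac12\,\mu(h^{-1}[c_0,\dots,c_n])$, because within each $h$-fibre the two points $\theta,\theta^*$ split by the value of $a_0$ and carry equal $\mu$-mass (an infinitesimal version of $\theta\mapsto\theta^*$ being $\mu$-preserving, which follows since $\mu$ of a cylinder is invariant under the $1\leftrightarrow2$ permutation, as is visible from the recursion formulas for $\mu$). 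Hence the first coordinate is Bernoulli$(1/2)$ and independent of $h$, i.e. $H_*\mu=\mu_0\times\mu_p$. Finally, $H$ is an a.e.\ bijection (injective because the pair $(d_0,h)$ separates $\theta$ from $\theta^*$, surjective onto a full-measure set by the explicit inverse), so $H$ is an ergodic equivalence conjugating $(\sigma,\mu)$ to $(T,\mu_0\times\mu_p)$. The entropy statement then follows from invariance of entropy under ergodic equivalence together with $h(\mu)=-p\log p-(1-p)\log(1-p)$ proved above (equivalently, $T$ has the same entropy as the Bernoulli shift $\sigma$ on $(\{0,1\}^{\mathbb N},\mu_p)$, since adjoining the two-point flip factor $c_0\mapsto 1-c_0$ on $(\{0,1\},\mu_0)$ contributes zero entropy).
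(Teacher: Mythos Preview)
Your overall architecture --- pair $h$ with an extra bit recording $a_0$, check injectivity, push the measure, compute the conjugated dynamics --- is exactly the paper's strategy, and your verification that $H_*\mu=\mu_0\times\mu_p$ via the $\theta\leftrightarrow\theta^*$ symmetry is correct. The gap is at precisely the step you flagged: the hope that with ``the right convention it collapses to $d_0(\sigma\theta)=1-d_0(\theta)$ exactly'' is false, and no relabelling of the first coordinate \emph{alone} will make it true.

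Concretely, write $h(\theta)=(h_0,h_1,h_2,\dots)$. From the parity formula $h_1=h_0+a_0+a_1+1\pmod 2$ one gets
\[
d_0(\sigma\theta)=a_1-1\equiv 1-d_0(\theta)+h_0+h_1 \pmod 2,
\]
so the ``correction'' is $h_0+h_1$, which genuinely depends on $h(\theta)$ and does not vanish. Thus your map $H=(d_0,h)$ (the paper calls it $g$, with $d_0=2-a_0$) conjugates $\sigma$ not to $T$ but to
\[
M(c_0,(c_1,c_2,\dots))=(c_0+c_1+c_2+1\pmod 2,\ (c_2,c_3,\dots)).
\]
The paper's remedy is one more conjugation by the involution
\[
u(c_0,(c_1,c_2,\dots))=(c_0+c_1\pmod 2,\ (c_1,c_2,\dots)),
\]
which preserves $\mu_0\times\mu_p$ and satisfies $u\circ M\circ u^{-1}=T$. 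Equivalently, replace your first coordinate $d_0$ by $\tilde d_0:=d_0+h_0\pmod 2$; then the same computation gives $\tilde d_0(\sigma\theta)\equiv 1-\tilde d_0(\theta)$, and $H:=u\circ g=(\tilde d_0,h)$ is the desired ergodic equivalence. Once you insert this extra twist, the rest of your argument (injectivity, full-measure image, measure preservation, entropy consequence) goes through as written.
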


 {\bf Proof:}  We denote a point  of $\{0,1\} \times \{0,1\}^{\mathbb N}$
 by $(c_0, (c_1,c_2,...,c_n,...)),$ where $c_0\in\{0,1\}$, and $(c_1,c_2,...) \in \{0,1\}^{\mathbb{N}}$.

 Let $T: (\{0,1\} \times \{0,1\}^{\mathbb N} \to \{0,1\} \times \{0,1\}^{\mathbb N}$ be the transformation
  $$T(c_0,(c_1,c_2...))=(1-c_0,(c_2,c_3,...))=(1-c_0,\sigma(c_1,c_2,...)).$$
Observe that  $T$ preserves the probability $\mu_0\times \mu_p$ in $\{0,1\}\times\{1,2\}^{\mathbb N}$.

The ergodic equivalence $H$ between the two systems $(\{1,2\}^{\mathbb N},\sigma,\mu)$ and $(\{0,1\}\times\{0,1\}^{\mathbb{N}},T,\mu_0\times\mu_p)$  is given by $H=u\circ g$ where $g:\{1,2\}^{\mathbb N}\to \{0,1\}\times \{0,1\}^{\mathbb N}$ satisfies  $$g(a_0,a_1,a_2,...)=(2-a_0,h(a_0,a_1,a_2,...))$$ and $u: \{0,1\}\times \{0,1\}^{\mathbb N}\to\{0,1\}\times \{0,1\}^{\mathbb N}$ satisfies $$u(c_0,(c_1,c_2,c_3,...))=(c_0+c_1 \pmod 2,(c_1,c_2,c_3,...)).$$

From the previous discussion the transformation $g$ is injective and its image, which is $\{0,1\}\times h(\{1,2\}^{\mathbb N})$, has full measure in $\{0,1\}\times\{0,1\}^{\mathbb N}$ with respect to $\mu_0\times \mu_p$.  Then, we can consider the application $g^{-1}$. We have that $g$ is measurable and following the above discussions, the restrictions $g|_A$ and $g|_B$ are homeomorphisms onto your images. Therefore $g^{-1}$ is measurable.
Moreover, $M:=g\circ \sigma\circ g^{-1}$ is given by
$$M(c_0,(c_1,c_2,...))=(c_0+c_1+c_2+1\pmod 2,(c_2,c_3,c_4,...)).$$
Indeed, if $g(a_0,a_1,...)=(c_0,(c_1,c_2,...))$, we get
\[ g\circ \sigma\circ g^{-1}(c_0,(c_1,c_2,...)) =g\circ \sigma(a_0,a_1,...) = g(a_1,a_2,...) = (2-a_1,h(a_1,a_2,...))   \]
\[= (2-a_1,h(\sigma (a_0,a_1,a_2,...))= (2-a_1,\sigma (h(a_0,a_1,a_2,...)) \]
\[=(2-a_1,\sigma (c_1,c_2,c_3,...))= (2-a_1,(c_2,c_3,c_4,...)) \]
Now we will show that $2-a_1 = c_0+c_1+c_2+1\pmod 2$. By definition of $h$, we get $c_1=0$ if $(a_0,a_1,...)\in A$ and $c_1=1$ if  $(a_0,a_1,...)\in B$, $c_2=0$ if $(a_1,a_2,...)\in A$, and $c_2=1$ if $(a_1,a_2,...)\in B$. As $(a_0,a_1,...)$ and $(a_1,a_2,...)$ both belong to  $A$, or both belong to $B$, if and only if, $a_0\neq a_1$, it follows that $2-a_1 = a_1 \pmod 2 =a_0+c_1+c_2+1 \pmod 2 = c_0+c_1+c_2+1\pmod 2$.

Observe now that $u$ is an involution. %) given by $$u(c_0,c_1,c_2,c_3,...)=(c_0+c_1 \pmod 2,c_1,c_2,c_3,...).$$
Furthermore
$$u\circ M \circ u^{-1}(c_0,(c_1,c_2,c_3,...))=u(M(c_0+c_1 \pmod 2,(c_1,c_2,c_3,...))$$
$$=u(c_0+c_2+1 \pmod 2,(c_2,c_3,...))=(c_0+1 \pmod 2,(c_2,c_3,...))$$
$$=(1-c_0,(c_2,c_3,...))=T(c_0,(c_1,c_2,...)).$$

Therefore, as $H=u\circ g$, we get
$$H\circ \sigma \circ H^{-1}=u\circ M \circ u^{-1}=T.$$

As
$\mu(\{1,2\}^\mathbb{N} )=1$, $\mu(1)=\mu(2)=1/2$ and, for $n\ge 1$,
$$\mu(k_0,k_1,k_2,...,k_n)=a(k_0,k_1)\mu(k_1,k_2,k_3,...,k_n)+b(k_0,k_1)\mu(k_2,k_3,...,k_n),$$
we get, for any cylinder $[k_0,...,k_n]$,
\[\mu(k_0,k_1,k_2,...,k_n) = \mu(3-k_0,3-k_1,...,3-k_n),\]
and, consequently, for any Borel set $X\subset \{1,2\}^{\mathbb{N}}$ we get $\mu(X) = \mu(X^{*})$, where
$X^*=\{\theta^*|\theta \in X \}$.

It follows that for any cylinder $[c_1,c_2,...,c_n] \subset \{0,1\}^{\mathbb{N}}$,
\[\mu(H^{-1}(0, [c_1,c_2,...,c_n])) = \mu ([2]\cap h^{-1}([c_1,...,c_n]) ) = \frac{1}{2}\mu( h^{-1}([c_1,...,c_n]) )  \]
and
\[\mu(H^{-1}(1, [c_1,c_2,...,c_n])) = \mu ([1]\cap h^{-1}([c_1,...,c_n]) ) = \frac{1}{2}\mu( h^{-1}([c_1,...,c_n]) )) . \]

As
\[\frac{1}{2}\mu( h^{-1}([c_1,...,c_n]) )) = \frac{1}{2}\mu_p[c_1,...,c_n],\]
we get that $H^{*}(\mu) = \mu_0\times \mu_p$.
This proves that  $H$ is an ergodic equivalence between  the shift $\sigma$ acting on $(\{1,2\}^{\mathbb N},\mu)$ and the transformation $T$ acting on $(\{0,1\}^{\mathbb N}, \mu_0\times \mu_p)$.

\qed

In \cite{LMMM} it is proved that $\mu$ is ergodic but not mixing for $\sigma$. Now we can conclude that it is not ergodic for $\sigma^2$.

\begin{corollary}
$\mu$ is ergodic for $\sigma$ but it is not ergodic for $\sigma^2$. Particularly it is not mixing.
\end{corollary}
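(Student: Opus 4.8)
The plan is to transport the question through the ergodic equivalence $H$ of Theorem~\ref{zeze} (constructed in the preceding Proposition) and then to exhibit a non-trivial $\sigma^2$-invariant set by a one-line computation. First I recall that $\mu$ is ergodic for $\sigma$: this is proved in \cite{LMMM}, so the first assertion requires nothing new. For the remaining assertions I use that $H$ satisfies $H\circ\sigma=T\circ H$ $\mu$-almost everywhere, where
$$T(c_0,(c_1,c_2,\dots))=(1-c_0,\sigma(c_1,c_2,\dots))$$
on $(\{0,1\}\times\{0,1\}^{\mathbb N},\mu_0\times\mu_p)$, and that $H$ is an (a.e.) invertible, measure-preserving map. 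Iterating the conjugacy gives $H\circ\sigma^2=T^2\circ H$ a.e., so $\mu$ is ergodic for $\sigma^2$ if and only if $\mu_0\times\mu_p$ is ergodic for $T^2$.

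Next I would compute $T^2$. One has
$$T^2(c_0,(c_1,c_2,c_3,\dots))=T(1-c_0,(c_2,c_3,\dots))=(c_0,(c_3,c_4,\dots))=(c_0,\sigma^2(c_1,c_2,\dots)),$$
so $T^2$ leaves each of the two sets $E_i:=\{i\}\times\{0,1\}^{\mathbb N}$, $i=0,1$, invariant. These sets are measurable, disjoint and satisfy $(\mu_0\times\mu_p)(E_0)=(\mu_0\times\mu_p)(E_1)=\tfrac12$, hence $T^2$ is not ergodic. Pulling the $E_i$ back by $H$, the sets $H^{-1}(E_0)$ and $H^{-1}(E_1)$ are $\sigma^2$-invariant Borel sets with $\mu(H^{-1}(E_0))=\mu(H^{-1}(E_1))=\tfrac12$, which shows that $\mu$ is not ergodic for $\sigma^2$.

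Finally, for the last sentence: if $\mu$ were mixing for $\sigma$, then $\sigma^2$ would also be mixing with respect to $\mu$ (any power of a mixing transformation is mixing), and in particular ergodic, contradicting the previous paragraph; hence $\mu$ is not mixing. I do not expect a serious obstacle here — the entire content is the elementary computation of $T^2$, the observation that a measure-theoretic conjugacy intertwines squares, and the standard fact that mixing passes to powers; the only non-trivial input, the ergodicity of $(\sigma,\mu)$ itself, is imported from \cite{LMMM}.
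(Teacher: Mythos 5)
Your proposal is correct and follows essentially the same route as the paper: both transport the problem through the ergodic equivalence $H$ with $(T,\mu_0\times\mu_p)$ and observe that $T^2$ preserves the two sets $\{i\}\times\{0,1\}^{\mathbb N}$, each of measure $1/2$, so $T^2$ (hence $\sigma^2$) is not ergodic and $\mu$ cannot be mixing. The only cosmetic difference is that you compute $T^2$ explicitly while the paper phrases it as the two sets being permuted by $T$ and hence invariant under $T^2$.
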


\begin{proof}
The measure $\mu_0\times \mu_p$ is ergodic for the map $T$  but not for  $T^2$ because it leave invariant the sets $\{0\}\times \{0,1\}^{{\mathbb N}^*}$ and $\{1\}\times \{0,1\}^{{\mathbb N}^*}$, which are permuted by $T$. Therefore, $(T, \mu_0\times \mu)$ is not mixing.
\end{proof}

The above reasoning provides another proof of that the Kolmogorov  entropy of the dynamical system $(\sigma,\mu)$ is $-p\log p-(1-p)\log(1-p)$ (see Theorem 4.23 in \cite{Wal}). That is, this entropy is equal to the entropy of the  Bernoulli shift $(\sigma,\mu_p)$.

\section{Appendix} \label{app}

			\begin{proposition}\label{hipotese_kolmogorov1}
				For all $n\in \{1,2,3,...\}$ we get
				\begin{equation*}
				\mu_{\beta,n+1}(j_1, ..., j_n, 1) + \mu_{\beta,n+1}(j_1, ..., j_n, 2) = \mubn(j_1, ..., j_n).
				\end{equation*}
			\end{proposition}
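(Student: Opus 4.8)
The plan is to sum the defining trace formula (\ref{def_mubn}) over the last symbol $j_{n+1}\in\{1,2\}$ and use the single resolution of identity $P_1+P_2=I$ (valid because $\psi_1,\psi_2$ is an orthonormal basis of $\mathbb{C}^2$), together with the vanishing of $\tr\sigma^x$.

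First I would record the elementary tensor decomposition of the Hamiltonian, $H_{n+1}=H_n\otimes I + (\sigma^x_n\otimes\sigma^x_{n+1})_{n+1}$, and observe that all the summands $(\sigma^x_l\otimes\sigma^x_{l+1})$ commute — the fact already used in the excerpt to factor $e^{-\beta H_n}$ — so that
$$e^{-\beta H_{n+1}} = \big(e^{-\beta H_n}\otimes I\big)\cdot\Big(I^{\otimes(n-1)}\otimes\big[\cos(i\beta)\,I\otimes I + i\sin(i\beta)\,\sigma^x\otimes\sigma^x\big]\Big).$$
Since $\sum_{j_{n+1}}P_{j_1}\otimes\cdots\otimes P_{j_n}\otimes P_{j_{n+1}} = P_{j_1}\otimes\cdots\otimes P_{j_n}\otimes I$, the summed numerator equals $\tr\big[e^{-\beta H_{n+1}}(P_{j_1}\otimes\cdots\otimes P_{j_n}\otimes I)\big]$. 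I would evaluate this by first taking the partial trace over the $(n+1)$-st tensor slot: applying the decomposition above to $P_{j_1}\otimes\cdots\otimes P_{j_n}\otimes I$, the term carrying $\sigma^x$ on that slot contributes $\tr\sigma^x=0$, while the term carrying $I$ there contributes $\tr I=2$, leaving on slots $1,\dots,n$ exactly $2\cos(i\beta)\,e^{-\beta H_n}(P_{j_1}\otimes\cdots\otimes P_{j_n})$; taking the remaining trace yields
$$\sum_{j_{n+1}}\tr\big[e^{-\beta H_{n+1}}(P_{j_1}\otimes\cdots\otimes P_{j_{n+1}})\big] = 2\cos(i\beta)\,\tr\big[e^{-\beta H_n}(P_{j_1}\otimes\cdots\otimes P_{j_n})\big].$$

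Finally I would divide by the normalizations. From $\tr(e^{-\beta H_m})=\cos^{m-1}(i\beta)\,2^m$ one gets $\tr(e^{-\beta H_{n+1}})=2\cos(i\beta)\,\tr(e^{-\beta H_n})$, so the factor $2\cos(i\beta)$ cancels and $\sum_{j_{n+1}}\mu_{\beta,n+1}(j_1,\dots,j_{n+1})=\mu_{\beta,n}(j_1,\dots,j_n)$. One may instead run the same computation directly on the normalized product in (\ref{def_mubn}): its first $n-1$ factors form $M\otimes I$ with $M=\prod_{i=1}^{n-1}[I^{\otimes n}+(\phib-1)(\sigma^x_i\otimes\sigma^x_{i+1})_n]$, its $n$-th factor is $I^{\otimes(n-1)}\otimes[I\otimes I+(\phib-1)\sigma^x\otimes\sigma^x]$, and the partial trace over slot $n+1$ converts $\frac{1}{2^{n+1}}\tr[\cdots(P_{j_1}\otimes\cdots\otimes P_{j_n}\otimes I)]$ into $\frac{1}{2^{n}}\tr[M(P_{j_1}\otimes\cdots\otimes P_{j_n})]=\mu_{\beta,n}(j_1,\dots,j_n)$; the case $n=1$ is a special instance of the same computation (with $I^{\otimes 0}$ empty), and can also be checked directly from Example \ref{ex_mu2emu3} using $\beta_1+\beta_2=0$.

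The only genuine work is the tensor-slot bookkeeping — verifying that passing from $n$ to $n+1$ factors adds exactly one factor of the product that couples to the new slot, all others being of the form $(\text{operator on slots }1,\dots,n)\otimes I$ — after which the identity $\tr\sigma^x=0$ does everything; I do not anticipate a real obstacle. I would also note that the companion identity $\sum_{j_0}\mu_{\beta,n+1}(j_0,j_1,\dots,j_n)=\mu_{\beta,n}(j_1,\dots,j_n)$, used in the proof of Theorem \ref{def_mub}, follows by the identical argument applied to the first tensor slot; this is the ``symmetry of (\ref{def_mubn})'' invoked there.
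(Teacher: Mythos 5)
Your proposal is correct and follows essentially the same route as the paper: sum over the last symbol using $P_1+P_2=I$, peel off the single factor of the (commuting) product that couples to slot $n+1$, kill its $\sigma^x$ contribution via $\tr \sigma^x=0$, and absorb the surviving $\tr I=2$ into the normalization. Your second, ``normalized product'' version is literally the paper's argument, and your remarks on the $n=1$ case and the companion identity for the first slot match the paper's treatment.
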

			\begin{proof}  For $n=1$ the result can be checked explicitly from example \ref{ex_mu2emu3}. For $n\geq 2$ we use  (\ref{def_mubn}):
				
				$$
				\mu_{\beta,n+1}(j_1, ..., j_n, 1) + \mu_{\beta,n+1}(j_1, ..., j_n, 2) =$$
$$\frac{1}{2^{n+1}}\tr\left[\begin{array}{lll}
					 \prod_{i=1}^{n} [ I^{\otimes\, n+1}  \,+\, (-1 \,+\, \phib)\, (\sigma^x_{i}\otimes \sigma^x_{i+1})_{n+1}]\,\circ\\
					 \left(P_{j_1} \otimes  P_{j_2}\,\otimes...\otimes P_{j_n} \,\otimes\, P_1\right)
				\end{array}\right] 	$$
$$
				+\frac{1}{2^{n+1}}\tr\left[\begin{array}{lll}
					 \prod_{i=1}^{n} [ I^{\otimes\, n+1}  \,+\, (-1 \,+\, \phib)\, (\sigma^x_{i}\otimes \sigma^x_{i+1})_{n+1}]\,\circ\\
					 \left( P_{j_1} \otimes  P_{j_2}\,\otimes...\otimes P_{j_n} \,\otimes\, P_2\right)
				\end{array}\right]  	$$
$$=\frac{1}{2^{n+1}}\tr\left[\begin{array}{lll}
					 \prod_{i=1}^{n}[ I^{\otimes\, n+1}  \,+\, (-1 \,+\, \phib)\, (\sigma^x_{i}\otimes \sigma^x_{i+1})_{n+1}]\,\circ\\
					\left( P_{j_1} \otimes  P_{j_2}\,\otimes...\otimes P_{j_n} \otimes \, (P_1+P_2)\right)
				\end{array}\right]= $$
$$		
		\frac{1}{2^{n+1}}\tr\left[\begin{array}{lll}
					\prod_{i=1}^{n}[ I^{\otimes\, n+1}  \,+\, (-1 \,+\, \phib)\, (\sigma^x_{i}\otimes \sigma^x_{i+1})_{n+1}]\,\circ\\
					\left( P_{j_1} \otimes  P_{j_2}\,\otimes...\otimes P_{j_n}\, \otimes \, I\right)
				\end{array}\right] =$$ %%		
$$\frac{1}{2^{n+1}}\tr\left[\begin{array}{lll}
					 \prod_{i=1}^{n-1} [ I^{\otimes\, n+1}  \,+\, (-1 \,+\, \phib)\, (\sigma^x_{i}\otimes \sigma^x_{i+1})_{n+1}]\,\circ\\
					 \left( P_{j_1} \otimes  P_{j_2}\,\otimes...\otimes P_{j_n}\, \otimes \, I\right)
				\end{array}\right] +  $$		

$$\frac{(-1 \,+\, \phib)}{2^{n+1}}\tr\left[\begin{array}{lll}
				\prod_{i=1}^{n-1} [ I^{\otimes\, n+1}  \,+\, (-1 \,+\, \phib)\, (\sigma^x_{i}\otimes \sigma^x_{i+1})_{n+1}]\,\circ\\
				\left( P_{j_1} \otimes  P_{j_2}\,\otimes...\otimes P_{j_n}\, \otimes \, I\right)\,\circ \\
				(\sigma^x_{n}\otimes \sigma^x_{n+1})_{n+1}
				\end{array}
				\right].
				$$

%\begin{figure}
%\begin{center}
%\includegraphics[width=10.5cm,height=8cm]{fi-pos2.pdf}
%\caption{ The graphs of $f$, the identity and the constant function equal to $\alpha$, when $\alpha= 1/5$ and $\gamma= 1/9$.}
%\end{center}
%\end{figure}

%\begin{figure}
%\begin{center}
%\includegraphics[width=10.5cm,height=8cm]{fi-pos1.pdf}
%\caption{ The graphs of $g$, the identity and the constant function equal to $1-\alpha$, when $\alpha= 1/5$ and $\gamma= 1/9$.}
%\end{center}
%\end{figure}

				\medskip
				
				 As $\tr(L_1\otimes\,\cdots\,\otimes L_n) = \tr(L_1)\cdots\tr(L_n)$ and $\tr(\sigma_x) = 0$, we finally  get:
				$$
				\mu_{\beta,n+1}(j_1, ..., j_n, 1) + \mu_{\beta,n+1}(j_1, ..., j_n, 2) =$$
$$\frac{1}{2^{n+1}}\tr\left[\begin{array}{lll}
					\prod_{i=1}^{n-1} \left[I^{\otimes n+1} \,+\, (-1 \,+\, \phib)\,(\sigma^x_{i}\otimes \sigma^x_{i+1})_{n+1}\right]\,\circ\\
				\left( P_{j_1} \otimes  P_{j_2}\,\otimes...\otimes P_{j_n}\, \otimes \, I\right)
				\end{array}\right].
				$$

Note that the last term in each term of the above  tensor products expression is the identity. As  $\tr(I) = 2$, then
				
				$$
				\mu_{\beta,n+1}(j_1, ..., j_n, 1) + \mu_{\beta,n+1}(j_1, ..., j_n, 2) =$$
				$$\frac{1}{2^{n}}\tr\left[\begin{array}{lll}
					\prod_{i=1}^{n-1} \left[I^{\otimes n} \,-\, (\sigma^x_{i}\otimes \sigma^x_{i+1})_{n} \,+\, \phib\, (\sigma^x_{i}\otimes \sigma^x_{i+1})_{n}\right]\,\circ\\
					\left( P_{j_1} \otimes  P_{j_2}\,\otimes...\otimes P_{j_n} \right)
				\end{array}\right]=$$
$$\mubn(j_1, ..., j_n).
				$$
			\end{proof}
			
\medskip

		\begin{theorem}\label{medidacilindro1}
			For the probability  $\mub$ and for any $n\ge 2$, we get
			$$
			\mub(k,j_1,...,j_n)= $$
			$$\frac{\mub( j_1,...,j_n )}{2}  + \sum_{i=1}^{n-2}\frac{(-1+\phib)^i \beta_k \beta_{j_i} }{2^{i+1}}\mub(j_{i+1},...,j_n)  +$$
$$\frac{(-1+\phib)^{n-1}\beta_k\beta_{j_{n-1}}}{2^{n+1}}+ \frac{(-1+\phib)^{n}\beta_k\beta_{j_n}}{2^{n+1}}.
			$$
		\end{theorem}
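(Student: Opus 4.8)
The plan is to work directly from the trace formula (\ref{def_mubn}), applied with $n$ replaced by $n+1$ to the cylinder $(k,j_1,\dots,j_n)$, and to expand the product of the $n$ operators $[I^{\otimes\, n+1}+(-1+\phib)(\sigma^x_{i}\otimes \sigma^x_{i+1})_{n+1}]$ as
$$\prod_{i=1}^{n}[I^{\otimes\, n+1}+(-1+\phib)(\sigma^x_{i}\otimes \sigma^x_{i+1})_{n+1}]=\sum_{S\subseteq\{1,\dots,n\}}(-1+\phib)^{|S|}\prod_{i\in S}(\sigma^x_{i}\otimes \sigma^x_{i+1})_{n+1}.$$
The first observation is that the operators $(\sigma^x_{i}\otimes \sigma^x_{i+1})_{n+1}$ pairwise commute (each is a tensor of copies of $\sigma^x$ and $I$, and $\sigma^x$ commutes with $\sigma^x$), so $\prod_{i\in S}(\sigma^x_{i}\otimes \sigma^x_{i+1})_{n+1}=\bigotimes_{\ell=1}^{n+1}(\sigma^x)^{c_\ell(S)}$, where $c_\ell(S)=|S\cap\{\ell-1,\ell\}|$ counts the pairs of $S$ meeting the $\ell$-th tensor slot (pair $i$ meets slots $i$ and $i+1$; slot $1$ is met only by pair $1$, slot $n+1$ only by pair $n$). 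Using $\tr(A_1\otimes\cdots\otimes A_{n+1})=\prod_\ell\tr(A_\ell)$, $(\sigma^x)^2=I$, $\tr(P_m)=1$ and $\tr(\sigma^x P_m)=\beta_m$ — with $P_k$ in slot $1$ and $P_{j_{\ell-1}}$ in slot $\ell$ for $\ell\ge 2$ — the contribution of a given $S$ is $(-1+\phib)^{|S|}$ times a product in which slot $\ell$ contributes $1$ if $c_\ell(S)$ is even and $\beta_\star$ (with $\star=k$ or $\star=j_{\ell-1}$) if $c_\ell(S)$ is odd.

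The main step is then the combinatorial bookkeeping: partition the $2^n$ subsets $S$ according to the largest $i\ge 0$ with $\{1,\dots,i\}\subseteq S$. By maximality $i+1\notin S$ whenever $i<n$, so $S=\{1,\dots,i\}\sqcup S''$ with $S''\subseteq\{i+2,\dots,n\}$ (and $S''=\emptyset$, $S=\{1,\dots,n\}$ in the extreme case $i=n$). For such an $S$: slot $1$ has $c_1=[\,i\ge 1\,]$, giving $\beta_k$ when $i\ge 1$ and $1$ when $i=0$; slots $2,\dots,i$ have $c_\ell=2$, giving $1$; slot $i+1$ has $c_{i+1}=1$ (because $i\in S$ but $i+1\notin S$), giving $\beta_{j_i}$; and slots $i+2,\dots,n+1$ depend only on $S''$. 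Summing $(-1+\phib)^{|S''|}$ against these remaining slot factors over all $S''\subseteq\{i+2,\dots,n\}$ reproduces — after re-indexing the pairs $i+2,\dots,n$ as $1,\dots,n-i-1$ and the slots $i+2,\dots,n+1$ as $1,\dots,n-i$ with projections $P_{j_{i+1}},\dots,P_{j_n}$ — exactly $2^{\,n-i}\mub(j_{i+1},\dots,j_n)$, again by (\ref{def_mubn}). Hence the $i=0$ block contributes $\tfrac{1}{2^{n+1}}\cdot 2^{n}\mub(j_1,\dots,j_n)=\tfrac12\mub(j_1,\dots,j_n)$, while for $i\ge 1$ the block contributes $\tfrac{1}{2^{n+1}}(-1+\phib)^i\beta_k\beta_{j_i}\,2^{\,n-i}\mub(j_{i+1},\dots,j_n)=\dfrac{(-1+\phib)^i\beta_k\beta_{j_i}}{2^{i+1}}\mub(j_{i+1},\dots,j_n)$. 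For $i=n-1$ the last factor is $\mub(j_n)=\mu_{\beta,1}(j_n)=1/2$, which produces $\dfrac{(-1+\phib)^{n-1}\beta_k\beta_{j_{n-1}}}{2^{n+1}}$, and for $i=n$ the empty cylinder gives value $1$, producing $\dfrac{(-1+\phib)^{n}\beta_k\beta_{j_{n}}}{2^{n+1}}$; adding the blocks $i=0,1,\dots,n$ yields precisely the claimed identity.

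The genuinely delicate point is the decoupling at slot $i+1$: one must check that, since pair $i+1$ is excluded from $S$, the slots $\ell\ge i+2$ are affected only by pairs $\ge i+2$, so that the partial sum over $S''$ literally matches the trace formula (\ref{def_mubn}) for the shorter cylinder $\mub(j_{i+1},\dots,j_n)$; the commutativity of the $(\sigma^x_{i}\otimes \sigma^x_{i+1})_{n+1}$'s and multiplicativity of the trace make this rigorous, and the edge cases $n=2$ and $i\in\{n-1,n\}$ are then matched against Example \ref{ex_mu2emu3} and the normalization $\mu_{\beta,1}(1)=\mu_{\beta,1}(2)=1/2$. An alternative is induction on $n$: peel off the factor $[I^{\otimes\, n+1}+(-1+\phib)(\sigma^x_{n}\otimes \sigma^x_{n+1})_{n+1}]$ from (\ref{def_mubn}), use that the remaining product acts as the identity on the last tensor slot, and identify the two resulting traces as $2^{\,n}\mub(k,j_1,\dots,j_{n-1})$ and a $\sigma^x$-decorated trace handled by iterating the same peeling; I expect the subset expansion above to be cleaner, as it avoids introducing auxiliary $\sigma^x$-decorated quantities.
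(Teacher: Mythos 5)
Your argument is correct and is in substance the same as the paper's: the paper expands the product $\prod_{i=1}^{n}[I^{\otimes n+1}+(-1+\phib)(\sigma^x_{i}\otimes \sigma^x_{i+1})_{n+1}]$ by peeling factors from the left one at a time, and each terminating ``identity branch'' of that telescoping is exactly your block of subsets $S$ with $\{1,\dots,i\}\subseteq S$ and $i+1\notin S$, with the same identities $\tr(A_1\otimes\cdots\otimes A_{n+1})=\prod_{\ell}\tr(A_\ell)$, $(\sigma^x)^2=I$, $\tr(\sigma^x P_m)=\beta_m$ and the reassembly of the untouched factors into the shorter cylinder doing all the work. Your one-shot commutative subset expansion is just a clean repackaging of that computation, and your separate treatment of the blocks $i=n-1$ and $i=n$ correctly accounts for the two isolated terms in the statement.
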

		
		\begin{proof} Using equation (\ref{def_mubn}) and the fact that $\mub$ coincide with $\mubn$ in cylinders, we get:
			
			$$
				\mub(k,j_1,...,j_n) = \frac{1}{2^{n+1}}\tr\left[\begin{array}{ll}
						\prod_{i=1}^{n} [I^{\otimes n+1}\, + \,(-1 \, + \, \phib\,)\, (\sigma^x_{i}\otimes \sigma^x_{i+1})_{n+1}]\,\circ\\
						(   P_k \otimes P_{j_1} \otimes  P_{j_2}\,\otimes...\otimes P_{j_n})	
					\end{array}\right]=$$
				$$  \frac{1}{2^{n+1}}\tr\left[\begin{array}{ll}
					\prod_{i=2}^{n} [I^{\otimes n+1}\, + \,(-1 \, + \, \phib\,)\, (\sigma^x_{i}\otimes \sigma^x_{i+1})_{n+1}]\,\circ\\
				(P_k \otimes P_{j_1} \otimes  P_{j_2}\,\otimes...\otimes P_{j_n})
				\end{array}\right]+$$
$$ \frac{(-1 \, + \, \phib\,)}{2^{n+1}}\tr\left[\begin{array}{ll}
					\prod_{i=2}^{n} [I^{\otimes n+1}\, + \,(-1 \, + \, \phib\,)\, (\sigma^x_{i}\otimes \sigma^x_{i+1})_{n+1}]\,\circ\\
					(\sigma^x_{1}\otimes \sigma^x_{2})_{n+1}\,\circ\\
					(P_k \otimes P_{j_1} \otimes  P_{j_2}\,\otimes...\otimes P_{j_n})
				\end{array}\right]=$$
				$$\frac{1}{2^{n+1}}\tr\left[\begin{array}{ll}
				 \prod_{i=1}^{n-1} [I^{\otimes n}\, + \,(-1 \, + \, \phib\,)\, (\sigma^x_{i}\otimes \sigma^x_{i+1})_{n}]\,\circ\\
					(P_{j_1} \otimes  P_{j_2}\,\otimes...\otimes P_{j_n})
				\end{array}\right]+$$

	$$			 \frac{(-1 \, + \, \phib\,)}{2^{n+1}}\tr\left[\begin{array}{ll}
					 \prod_{i=2}^{n} [I^{\otimes n+1}\, + \,(-1 \, + \, \phib\,)\, (\sigma^x_{i}\otimes \sigma^x_{i+1})_{n+1}]\,\circ\\
					(\sigma^x_{1}\otimes \sigma^x_{2})_{n+1}\,\circ\\
					(P_k \otimes P_{j_1} \otimes  P_{j_2}\,\otimes...\otimes P_{j_n})
				\end{array}\right]=$$
				$$
					\frac{1}{2}\mub(j_1,...,j_n) + $$
$$
				 \frac{(-1 \, + \, \phib\,)}{2^{n+1}}\tr\left[\begin{array}{ll}
					\prod_{i=2}^{n} [I^{\otimes n+1}\, + \,(-1 \, + \, \phib\,)\, (\sigma^x_{i}\otimes \sigma^x_{i+1})_{n+1}] \,\circ\\
					(\sigma^x_{1} P_k \otimes \sigma^x_{2} P_{j_1} \otimes  P_{j_2}\,\otimes...\otimes P_{j_n})
				\end{array}\right]=$$
			$$ \frac{1}{2}\mub(j_1,...,j_n) + $$
$$ \frac{(-1 \, + \, \phib\,)}{2^{n+1}}\tr\left[\begin{array}{ll}
					\prod_{i=3}^{n} [I^{\otimes n+1}\, + \,(-1 \, + \, \phib\,)\, (\sigma^x_{i}\otimes \sigma^x_{i+1})_{n+1}] \,\circ\\
					(\sigma^x_{1} P_k \otimes \sigma^x_{2} P_{j_1} \otimes  P_{j_2}\,\otimes...\otimes P_{j_n})
				\end{array}\right]+$$
$$\frac{(-1 \, + \, \phib\,)^2}{2^{n+1}}\tr\left[\begin{array}{lll}
				\prod_{i=3}^{n} [I^{\otimes n+1}\, + \,(-1 \, + \, \phib\,)\, (\sigma^x_{i}\otimes \sigma^x_{i+1})_{n+1}] \,\circ\\(\sigma^x_{2}\otimes \sigma^x_{3})_{n+1} \,\circ\\
			(\sigma^x_{1} P_k \otimes \sigma^x_{2} P_{j_1} \otimes  P_{j_2}\,\otimes...\otimes P_{j_n})
				\end{array}\right]=$$
		$$\frac{1}{2}\mub(j_1,...,j_n) + \frac{(-1 \, + \, \phib\,)\beta_{k}\beta_{j_1}}{2^2}\mub(j_2,...,j_n)\, +$$
			$$ \frac{(-1 \, + \, \phib\,)^2}{2^{n+1}}\tr\left[\begin{array}{ll}
					\prod_{i=3}^{n} [I^{\otimes n+1}\, + \,(-1 \, + \, \phib\,)\, (\sigma^x_{i}\otimes \sigma^x_{i+1})_{n+1}]\,\circ\\
					(\sigma^x_{2}\otimes \sigma^x_{3})_{n+1}\,\circ\\
					(\sigma^x_{1} P_k \otimes \sigma^x_{2} P_{j_1} \otimes  P_{j_2}\,\otimes...\otimes P_{j_n})
				\end{array}\right] =
			$$
			
			$$\vdots$$
			$$
				\frac{1}{2}\mub(j_1,...,j_n) + \frac{(-1 \, + \, \phib\,)\beta_{k}\beta_{j_1}}{2^2}\mub(j_2,...,j_n)\, +
				 ... +$$
$$ \frac{(-1 \, + \, \phib\,)^{n-2}\beta_{k}\beta_{j_{n-2}}}{2^{n-1}}\mub(j_{n-1},j_n) +$$
				$$\frac{(-1 \, + \, \phib\,)^{n-1}}{2^{n+1}}\tr\left[\begin{array}{lll}
					 (I^{\otimes n+1}\, + \,(-1 \, + \, \phib\,)(\sigma^x_{n}\otimes \sigma^x_{n+1})_{n+1})\,\circ\\
					 (  \sigma^x_{1} P_k \otimes  P_{j_1} \otimes  P_{j_2}\,\otimes...\otimes \sigma^x_{n}P_{j_{n-1}}\otimes P_{j_n})\end{array}\right]  =	$$
$$ \frac{\mub( j_1,...,j_n )}{2}  + \sum_{i=1}^{n-2}\frac{(-1+\phib)^i \beta_k \beta_{j_i} }{2^{i+1}}\mub(j_{i+1},...,j_n)  +$$
$$\frac{(-1+\phib)^{n-1}\beta_k\beta_{j_{n-1}}}{2^{n+1}} + \frac{(-1+\phib)^{n}\beta_k\beta_{j_n}}{2^{n+1}}.
			$$

\medskip
Above we use several times the property $\tr(L\otimes\,\cdots\,\otimes L) = \tr(L)\cdots\tr(L)$ and
the linearity of the trace.

		\end{proof}

\medskip

		\begin{proposition}\label{recursive1}
			For any $n\geq 1$, we get
			$$
				\mub (k_0 , k_{1} ,...,k_n)=$$
$$\frac{1}{2}\bigg(1+\frac{\beta_{k_0}}{\beta_{k_1}}(-1+\phib)\bigg)\mub (k_1 , ...,k_{n})\,+
										   \frac{\beta_{k_0}}{2}(-1+\phib) \bigg(\frac{-1}{2\beta_{k_{1}}}+\frac{\beta_{k_{1}}}{2}\bigg)\mub (k_2,...,k_{n}).
			$$

		\end{proposition}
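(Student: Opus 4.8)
The natural route is to deduce this second recurrence from the first one, Theorem \ref{medidacilindro1}, which is already available. Write $\tau := -1+\phib$ for brevity. The key structural observation is that, for $n\ge 2$, the right‑hand side of Theorem \ref{medidacilindro1} depends on the head symbol $k$ only through an overall factor $\beta_k$ present in every term except the leading one; that is,
$$\mub(k,j_1,\dots,j_n)=\tfrac12\,\mub(j_1,\dots,j_n)+\beta_k\,S(j_1,\dots,j_n),$$
where
$$S(j_1,\dots,j_n):=\sum_{i=1}^{n-2}\frac{\tau^{i}\beta_{j_i}}{2^{i+1}}\,\mub(j_{i+1},\dots,j_n)+\frac{\tau^{n-1}\beta_{j_{n-1}}}{2^{n+1}}+\frac{\tau^{n}\beta_{j_n}}{2^{n+1}}$$
does not depend on $k$.

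First I would establish a self‑similar recursion for $S$: peeling off the $i=1$ term and reindexing $i\mapsto i-1$ in the remaining sum — checking that the two boundary terms $\tau^{n-1}\beta_{j_{n-1}}/2^{n+1}$ and $\tau^{n}\beta_{j_n}/2^{n+1}$ are absorbed correctly — gives, for $n\ge 3$,
$$S(k_1,\dots,k_n)=\frac{\tau\,\beta_{k_1}}{4}\,\mub(k_2,\dots,k_n)+\frac{\tau}{2}\,S(k_2,\dots,k_n).$$
Second, applying the displayed identity above with head symbol $k_1$ and tail $(k_2,\dots,k_n)$ and solving for $S$ yields
$$S(k_2,\dots,k_n)=\frac{1}{\beta_{k_1}}\Big(\mub(k_1,\dots,k_n)-\tfrac12\,\mub(k_2,\dots,k_n)\Big),$$
where one uses $\beta_{k_1}=\pm\sin(2\theta)\neq 0$, which holds under Assumption A. Substituting this into the recursion for $S$, and then substituting the result into the identity with head symbol $k_0$, a short algebraic simplification (collecting the coefficients of $\mub(k_1,\dots,k_n)$ and of $\mub(k_2,\dots,k_n)$) produces exactly
$$\mub(k_0,k_1,\dots,k_n)=\frac12\Big(1+\frac{\beta_{k_0}}{\beta_{k_1}}\tau\Big)\mub(k_1,\dots,k_n)+\frac{\beta_{k_0}}{2}\tau\Big(\frac{-1}{2\beta_{k_1}}+\frac{\beta_{k_1}}{2}\Big)\mub(k_2,\dots,k_n),$$
which is the claim with $\tau=-1+\phib$.

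It remains to treat the low‑order cases $n=1$ and $n=2$, where the sum defining $S$ is empty and the recursion for $S$ is not yet available; these follow by a direct computation using the explicit formulas for $\mub$ on cylinders of length $1,2,3$ from Example \ref{ex_mu2emu3} (together with $\mu_{\beta,1}(k)=1/2$ and the convention that $\mub$ of the empty word equals $1$). I expect the only delicate point to be the index bookkeeping in the reindexing step for $S$ — in particular, making sure the two boundary terms of $S(k_1,\dots,k_n)$ match $\tfrac{\tau}{2}$ times the corresponding boundary terms of $S(k_2,\dots,k_n)$; once that is in place, everything reduces to elementary algebra. As an alternative one could instead re‑run the trace manipulation in the proof of Theorem \ref{medidacilindro1} but stop after two peeling steps and identify the leftover trace expression with $\mub(k_1,\dots,k_n)$; however, the argument above is cleaner since it only invokes already‑established formulas.
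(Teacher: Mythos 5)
Your proposal is correct and takes essentially the same route as the paper: both eliminate the long sum in Theorem \ref{medidacilindro1} by combining its instance with head $k_0$ against its instance with head $k_1$ so that the reindexed tails and boundary terms cancel (your auxiliary function $S$ and its self-similar recursion are just a repackaging of the paper's linear combination $\frac{2}{\beta_{k_0}}(\cdot)-\frac{(-1+\phib)}{\beta_{k_1}}(\cdot)$), and both settle the low-order cases $n=1,2$ by direct computation.
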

		
		\begin{proof}
			The cases $n=1,2$ correspond to
			\noindent\[ \mub (k_0 , k_{1})=\frac{1}{2}\bigg(1+\frac{\beta_{k_0}}{\beta_{k_1}}(-1+\phib)\bigg)\mub (k_1) +\frac{\beta_{k_0}}{2}(-1+\phib) \bigg(\frac{-1}{2\beta_{k_{1}}}+\frac{\beta_{k_{1}}}{2}\bigg)\]
			\noindent and
			
$$\mub (k_0 , k_1 ,k_2)=\frac{1}{2}\bigg(1+\frac{\beta_{k_0}}{\beta_{k_1}}(-1+\phib)\bigg)\mub (k_1 , k_2) +$$
$$\frac{\beta_{k_0}}{2}(-1+\phib) \bigg(\frac{-1}{2\beta_{k_{1}}}+\frac{\beta_{k_{1}}}{2}\bigg)\mub (k_2) ,$$
			\noindent which can be directly obtained by using the fact that $\mub(k)=1/2, \,k=1,2$.
			
			 For the case $n\geq 2$, note that from Theorem  \ref{medidacilindro} we get the equations
			$$
			\frac{2\mub(k_0,k_1,...,k_n)}{\beta_{k_0}} = \frac{\mub(k_1,...,k_n)}{\beta_{k_0}} + \sum_{i=1}^{n-2}\frac{(-1+\phib)^i\beta_{k_i}\mub(k_{i+1},...,k_n)}{2^i}\,+$$
$$ \frac{(-1+\phib)^{n-1}\beta_{k_{n-1}}}{2^n} + \frac{(-1+\phib)^n\beta_{k_n}}{2^n} =$$
	$$	\frac{\mub(k_1,...,k_n)}{\beta_{k_0}} + \frac{(-1+\phib)\beta_{k_1}\mub(k_2,...,k_n)}{2}\,+$$
			$$\sum_{i=2}^{n-2}\frac{(-1+\phib)^i\beta_{k_i}\mub(k_{i+1},...,k_n)}{2^i}\,+$$
			$$\frac{(-1+\phib)^{n-1}\beta_{k_{n-1}}}{2^n} + \frac{(-1+\phib)^n\beta_{k_n}}{2^n},$$
			
			\noindent and
			$$
			\frac{(-1+\phib)\mub(k_1,...,k_n)}{\beta_{k_1}} = \frac{(-1+\phib)\mub(k_2,...,k_n)}{2\beta_{k_1}}\,+$$ $$\sum_{i=1}^{n-3}\frac{(-1+\phib)^{i+1}\beta_{k_{i+1}}\mub(k_{i+2},...,k_n)}{2^{i+1}}\,+$$
$$ \frac{(-1+\phib)^{n-1}\beta_{k_{n-1}}}{2^{n}} + \frac{(-1+\phib)^{n}\beta_{k_n}}{2^{n}} =$$
		$$\frac{(-1+\phib)\mub(k_2,...,k_n)}{2\beta_{k_1}}\, +$$
			$$\sum_{i=2}^{n-2}\frac{(-1+\phib)^{i}\beta_{k_{i}}\mub(k_{i+1},...,k_n)}{2^{i}}\,+$$
			$$ \frac{(-1+\phib)^{n-1}\beta_{k_{n-1}}}{2^{n}} + \frac{(-1+\phib)^{n}\beta_{k_n}}{2^{n}}.
			$$
			 Then,
			$$
			\frac{2\mub(k_0,k_1,...,k_n)}{\beta_{k_0}} - \frac{(-1+\phib)\mub(k_1,...,k_n)}{\beta_{k_1}}$$ $$
			= \left(\frac{\mub(k_1,...,k_n)}{\beta_{k_0}} + \frac{(-1+\phib)\beta_{k_1}\mub(k_2,...,k_n)}{2}\right) - \left( \frac{(-1+\phib)\mub(k_2,...,k_n)}{2\beta_{k_1}}  \right).
			$$ 	
			 Therefore,
			\begin{align*}
			\mub(k_0,k_1,...,k_n) &= \frac{1}{2}\bigg( 1 + \frac{\beta_{k_0}}{\beta_{k_1}}(-1+\phib)\bigg)\mub(k_1,...,k_n) + \\ &\frac{\beta_{k_0}}{2}(-1+\phib)\bigg(-\frac{1}{2\beta_{k_1}} + \frac{\beta_{k_1}}{2}\bigg)\mub(k_2,...,k_n).
			\end{align*}
			
		\end{proof}
		
\medskip

		\begin{theorem}\label{teoQ1} For all $n\in \mathbb{N}$ and $t\in\mathbb{R}$
			
			\begin{equation}\label{main1}
			Q_n (t)=\left[\begin{array}{l}\frac{1}{2}\delta(t)\,Q_{n-1}(t) + \frac{(-1+\phib)}{4}\alpha(t)^2\,Q_{n-2}(t)+ \frac{(-1+\phib)^2}{8}\delta(t)\,\alpha(t)^2\, Q_{n-3}(t) \\ \\
			\,\,+  \frac{(-1+\phib)^3}{16}\delta(t)^2\,\alpha(t)^2\, \, Q_{n-4}(t)+\frac{(-1+\phib)^4}{32}\delta(t)^3\,\alpha(t)^2\, \, Q_{n-5}(t)+...+\\  \\      +\frac{(-1+\phib)^{n-3}}{2^{n-2}}\, \delta(t)^{n-4}\,\alpha(t)^2\, \, Q_{ 2}(t)
			+ \frac{(-1+\phib)^{n-2}}{2^{n-1}}\, \delta(t)^{n-3}\,\alpha(t)^2\, \, Q_{ 1}(t)\\ \\
			+ \frac{\phib(-1+\phib)^{n-1}}{2^{n+1}}\alpha^2(t)\delta^{n-2}(t).\end{array}\right] .
			\end{equation}
			
		\end{theorem}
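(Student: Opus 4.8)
\textbf{Plan of proof of Theorem~\ref{teoQ1}.}
The plan is to insert the cylinder recurrence of Theorem~\ref{medidacilindro1} directly into the finite sum defining $Q_n(t)$ and then to use the hypothesis that $A$ depends on the first coordinate alone, so that every exponential weight factorises across the symbols. For $n\ge 2$ one starts from
$$Q_n(t)=\sum_{k}\sum_{j_1}\cdots\sum_{j_n} e^{t\,(A(k)+A(j_1)+\cdots+A(j_n))}\,\mub(k,j_1,\dots,j_n)$$
and replaces $\mub(k,j_1,\dots,j_n)$ by its value from Theorem~\ref{medidacilindro1}, that is, by the sum of $\tfrac12\mub(j_1,\dots,j_n)$, of $\sum_{i=1}^{n-2}\tfrac{(-1+\phib)^i\beta_k\beta_{j_i}}{2^{i+1}}\mub(j_{i+1},\dots,j_n)$, and of the two boundary terms $\tfrac{(-1+\phib)^{n-1}\beta_k\beta_{j_{n-1}}}{2^{n+1}}$ and $\tfrac{(-1+\phib)^{n}\beta_k\beta_{j_n}}{2^{n+1}}$. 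By linearity of the summation this splits $Q_n(t)$ into three groups of terms, which I would treat one at a time, exactly as the computation of $Q_3(t)$ was carried out in the Example preceding the statement (the case $n=1$ being immediate from the explicit formula for $\mu_{\beta,2}$ in Example~\ref{ex_mu2emu3}).

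The bookkeeping is governed by three elementary rules: a summation index that occurs only through $e^{tA(\cdot)}$ contributes a factor $\delta(t)=\sum_j e^{tA(j)}$; an index that occurs through $\beta_{\cdot}\,e^{tA(\cdot)}$ contributes $\alpha(t)=\sum_j\beta_j e^{tA(j)}$; and a residual cylinder weight $\mub(j_{i+1},\dots,j_n)$ summed against its own exponential factor equals, by the very definition of $Q$, $Q_{n-i-1}(t)$. Applying these rules, the group coming from $\tfrac12\mub(j_1,\dots,j_n)$ becomes $\bigl(\sum_k e^{tA(k)}\bigr)\tfrac12\,Q_{n-1}(t)=\tfrac12\,\delta(t)\,Q_{n-1}(t)$; and for each fixed $i$ the summand carrying $\beta_k\beta_{j_i}$ produces one $\alpha(t)$ from the index $k$, a second $\alpha(t)$ from the index $j_i$, a factor $\delta(t)^{i-1}$ from the free indices $j_1,\dots,j_{i-1}$, and a factor $Q_{n-i-1}(t)$ from the tail, so that the whole middle group equals $\sum_{i=1}^{n-2}\tfrac{(-1+\phib)^i}{2^{i+1}}\,\alpha(t)^2\,\delta(t)^{i-1}\,Q_{n-i-1}(t)$, which is precisely the chain of terms running from $\tfrac{(-1+\phib)}{4}\alpha(t)^2Q_{n-2}(t)$ down to $\tfrac{(-1+\phib)^{n-2}}{2^{n-1}}\delta(t)^{n-3}\alpha(t)^2Q_1(t)$.

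For the pair of boundary terms the index $k$ again produces an $\alpha(t)$, the index carrying the second $\beta$ (namely $j_{n-1}$, respectively $j_n$) produces the other $\alpha(t)$, and all the remaining indices are free and produce one and the same factor $\delta(t)^{r}$; hence the two contributions differ only through their coefficients $\tfrac{(-1+\phib)^{n-1}}{2^{n+1}}$ and $\tfrac{(-1+\phib)^{n}}{2^{n+1}}$, and by the identity $1+(-1+\phib)=\phib$ these add up to $\tfrac{\phib(-1+\phib)^{n-1}}{2^{n+1}}$, so that the combined term is exactly the last summand of the asserted formula. I expect the only genuine difficulty to be organisational rather than conceptual: one must keep the three coupled geometric patterns --- the powers of $(-1+\phib)$, the powers of $2$ in the denominators, and the powers of $\delta(t)$ --- correctly synchronised across all of the terms, which is most safely achieved by a single telescoping induction on the index $i$ rather than by re-deriving each term in isolation.
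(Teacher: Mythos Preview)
Your plan is correct and follows essentially the same route as the paper's own proof: substitute the cylinder expansion of Theorem~\ref{medidacilindro1} into the defining sum for $Q_n(t)$, separate by linearity, and read off the factors $\delta(t)$, $\alpha(t)$ and $Q_{n-i-1}(t)$ from the free, $\beta$-weighted, and tail indices respectively, finally combining the two boundary terms via $1+(-1+\phib)=\phib$. Your explicit statement of the three ``bookkeeping rules'' is in fact a cleaner organisation of the same computation the paper carries out line by line.
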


		\begin{proof}
			By definition
			$$Q_n(t) =  \sum_{j_0}\, \sum_{j_1}\, ...\sum_{j_n} e^{t\, (  A(j_0) + A(j_1) +...+ A(j_n) ) }\, \mub ( j_0,j_1,...j_n),$$
			\noindent\2 and by Theorem \ref{medidacilindro}			
			\begin{align*}
				\mub (&j_0 , j_1 , j_2,...,j_n) \,=\,\,\, \frac{\mub( j_1,j_2,...,j_n )}{2} + \frac{(-1+\phib)\beta_{j_0}\, \beta_{j_1}}{2^{2}} \, \mub (j_2,...,j_n) + \,\\
				&\frac{(-1+\phib)^2\beta_{j_0}\beta_{j_2}}{2^{3}} \mub( j_3,j_4,...,j_n ) + \frac{(-1+\phib)^3\beta_{j_0} \beta_{j_3}}{2^{4}} \mub( j_4,j_5,...,j_n )+...+\\
				&\frac{(-1+\phib)^{n-2} \beta_{j_0} \beta_{j_{n-2}}}{2^{n-1}} \mub(j_{n-1},j_n)+ \frac{(-1+\phib)^{n-1} \beta_{j_0} \beta_{j_{n-1}}}{2^{n+1}}+ \frac{(-1+\phib)^{n} \beta_{j_0} \beta_{j_n}}{2^{n+1}}.
			\end{align*}

			\2 Therefore,
			$$
			Q_n(t)=\frac{1}{2}\sum_{j_0}e^{t\,  A(j_0)}\sum_{j_1,...,j_n} e^{t\, ( A(j_1) +...+ A(j_n) ) }\, \mub (j_1,...j_n)+
			$$
			$$+\frac{(-1+\phib)}{2^2}\sum_{j_0}e^{t\,  A(j_0)}\beta_{j_0}\sum_{j_1}e^{tA(j_1)}\beta_{j_1}\sum_{j_2,...,j_n} e^{t\, ( A(j_2) +...+ A(j_n) ) }\, \mub (j_2,...j_n)$$
			$$+\frac{(-1+\phib)^2}{2^3}\sum_{j_0}e^{tA(j_0)}\beta_{j_0}\sum_{j_1}e^{tA(j_1)}\sum_{j_2}e^{tA(j_2)}\beta_{j_2}\sum_{j_3,...,j_n} e^{t\, ( A(j_3) +...+ A(j_n) ) }\, \mub (j_3,...j_n)$$
			$$...+ \frac{(-1+\phib)^{n}}{2^{n+1}}  \sum_{j_0}e^{tA(j_0)}\beta_{j_0} \sum_{j_n}e^{tA(j_n)}\beta_{j_n}\sum_{j_2}e^{tA(j_2)}...\sum_{j_{n-1}}e^{tA(j_{n-1})}=$$
		
			\begin{align*}
				&\frac{1}{2}\delta(t)\,Q_{n-1}(t)+ \frac{(-1+\phib)}{4}\alpha(t)^2\,Q_{n-2}(t)+ \frac{(-1+\phib)^2}{8}\delta(t)\,\alpha(t)^2\, Q_{n-3}(t)\,+\\  &\hspc\frac{(-1+\phib)^3}{16}\delta(t)^2\,\alpha(t)^2\, \, Q_{n-4}(t)+
				\frac{(-1+\phib)^4}{32}\delta(t)^3\,\alpha(t)^2\, \, Q_{n-5}(t)+...      + \\
				&\hspc\frac{(-1+\phib)^{n-3}}{2^{n-2}}\, \delta(t)^{n-4}\,\alpha(t)^2\, \, Q_{ 2}(t)
				+ \frac{(-1+\phib)^{n-2}}{2^{n-1}}\, \delta(t)^{n-3}\,\alpha(t)^2\, \, Q_{ 1}(t)\,+\\
				&\hspc\frac{(-1+\phib)^{n-1}}{2^{n+1}}\alpha(t)^2\delta(t)^{n-2}+ \frac{(-1+\phib)^{n}}{2^{n+1}}\alpha(t)^2\delta(t)^{n-2}\\
				&=\frac{1}{2}\delta(t)\,Q_{n-1}(t)+ \frac{(-1+\phib)}{4}\alpha(t)^2\,Q_{n-2}(t)+ \frac{(-1+\phib)^2}{8}\delta(t)\,\alpha(t)^2\, Q_{n-3}(t)\,+\\  &\hspc\frac{(-1+\phib)^3}{16}\delta(t)^2\,\alpha(t)^2\, \, Q_{n-4}(t)+
				\frac{(-1+\phib)^4}{32}\delta(t)^3\,\alpha(t)^2\, \, Q_{n-5}(t)+...      +\\
				&\hspc\frac{(-1+\phib)^{n-3}}{2^{n-2}}\, \delta(t)^{n-4}\,\alpha(t)^2\, \, Q_{ 2}(t)
				+ \frac{(-1+\phib)^{n-2}}{2^{n-1}}\, \delta(t)^{n-3}\,\alpha(t)^2\, \, Q_{ 1}(t)\,+\\ &\hspc\frac{\phib(-1+\phib)^{n-1}}{2^{n+1}}\alpha^2(t)\delta^{n-2}(t).
			\end{align*}			
			
		\end{proof}

\medskip

		\begin{proposition}\label{recurrence_relation1}
			\[Q_{n+2}(t) = (-1+\phib)\frac{\alpha(t)^2-\delta(t)^2}{4}\,Q_{n}(t) + \frac{\phib}{2}\delta(t)\,Q_{n+1}(t).\]
		\end{proposition}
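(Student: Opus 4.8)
The natural strategy is to read the recurrence off directly from the long expansion of $Q_n(t)$ supplied by Theorem \ref{teoQ1}, by forming the linear combination of the expansions of $Q_{n+2}(t)$, $Q_{n+1}(t)$ and $Q_n(t)$ in which the long ``tails'' cancel. First I would write Theorem \ref{teoQ1} at level $n+2$,
\[
Q_{n+2}(t) = \frac{1}{2}\delta(t)\,Q_{n+1}(t) + \frac{(-1+\phib)}{4}\alpha(t)^2\,Q_n(t) + \frac{(-1+\phib)^2}{8}\delta(t)\,\alpha(t)^2\,Q_{n-1}(t) + \cdots + \frac{\phib(-1+\phib)^{n+1}}{2^{n+3}}\alpha(t)^2\delta(t)^n,
\]
and at level $n+1$,
\[
Q_{n+1}(t) = \frac{1}{2}\delta(t)\,Q_n(t) + \frac{(-1+\phib)}{4}\alpha(t)^2\,Q_{n-1}(t) + \cdots + \frac{\phib(-1+\phib)^{n}}{2^{n+2}}\alpha(t)^2\delta(t)^{n-1}.
\]

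The key observation is that, term by term, the coefficient of $Q_{n+1-j}(t)$ (for $j\ge 2$) in the first expansion is obtained from the coefficient of $Q_{n+1-j}(t)$ in the second by multiplying by the single factor $\tfrac{(-1+\phib)}{2}\delta(t)$, and the very same factor relates the two closing $\phib$-terms. Consequently, subtracting from $Q_{n+2}(t)$ its first two terms $\tfrac12\delta(t)Q_{n+1}(t)$ and $\tfrac{(-1+\phib)}{4}\alpha(t)^2Q_n(t)$ leaves exactly $\tfrac{(-1+\phib)}{2}\delta(t)$ times everything in the $Q_{n+1}(t)$ expansion other than its first term $\tfrac12\delta(t)Q_n(t)$; that is,
\[
Q_{n+2}(t) - \frac{1}{2}\delta(t)\,Q_{n+1}(t) - \frac{(-1+\phib)}{4}\alpha(t)^2\,Q_n(t) = \frac{(-1+\phib)}{2}\delta(t)\Bigl(Q_{n+1}(t) - \frac{1}{2}\delta(t)\,Q_n(t)\Bigr).
\]
Collecting the $Q_{n+1}(t)$ terms via $\tfrac12\delta(t)+\tfrac{(-1+\phib)}{2}\delta(t)=\tfrac{\phib}{2}\delta(t)$ and the $Q_n(t)$ terms via $\tfrac{(-1+\phib)}{4}\alpha(t)^2-\tfrac{(-1+\phib)}{4}\delta(t)^2=(-1+\phib)\tfrac{\alpha(t)^2-\delta(t)^2}{4}$ then yields precisely the asserted identity.

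The one point requiring care is the range of validity: the displayed form of Theorem \ref{teoQ1} presupposes that enough intermediate terms are actually present, so the tail-matching argument above applies verbatim once $n$ is large enough (say $n\ge 2$), and I would dispatch the remaining small values by direct substitution of the explicit expressions for $Q_0(t),Q_1(t),Q_2(t),Q_3(t)$ obtained from Example \ref{ex_mu2emu3} and Theorem \ref{teoQ1}. I expect this index bookkeeping at the two ends of the expansion --- the first two explicit terms on one side and the closing $\phib$-term on the other --- to be the only mildly delicate step; everything else is the routine algebra indicated above.
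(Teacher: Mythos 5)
Your proposal is correct and follows essentially the same route as the paper: the paper likewise subtracts $\frac{(-1+\phib)}{2}\delta(t)$ times the expansion of the lower-index $Q$ from the expansion of the higher-index one, observes the termwise cancellation of the tails (including the closing $\phib$-term), and collects the two surviving terms exactly as you do. Your remark about checking small $n$ separately is a reasonable extra care that the paper's proof passes over silently.
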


		\begin{proof}
			
		From the above reasoning we get
			
			\begin{align*}
				Q_n(t)=&\frac{1}{2}\delta(t)\,Q_{n-1}(t)+ \frac{(-1+\phib)}{4}\alpha(t)^2\,Q_{n-2}(t)+ \frac{(-1+\phib)^2}{8}\delta(t)\,\alpha(t)^2\, Q_{n-3}(t)\,+\\
				&\frac{(-1+\phib)^3}{16}\delta(t)^2\,\alpha(t)^2\, \, Q_{n-4}(t)+
				\frac{(-1+\phib)^4}{32}\delta(t)^3\,\alpha(t)^2\, \, Q_{n-5}(t)+...+\\
				&\frac{(-1+\phib)^{n-3}}{2^{n-2}}\, \delta(t)^{n-4}\,\alpha(t)^2\, \, Q_{ 2}(t)
				+ \frac{(-1+\phib)^{n-2}}{2^{n-1}}\, \delta(t)^{n-3}\,\alpha(t)^2\, \, Q_{ 1}(t) \,+ \\ 	
				&\frac{\phib(-1+\phib)^{n-1}}{2^{n+1}}\alpha^2(t)\delta^{n-2}(t),
			\end{align*}
			
			\noindent and using again the last proposition for $Q_{n-1}(t)$ we get
			\begin{align*}
				Q_{n-1}(t)=&\frac{1}{2}\delta(t)\,Q_{n-2}(t)+ \frac{(-1+\phib)}{4}\alpha(t)^2\,Q_{n-3}(t)\,+\\
				&\frac{(-1+\phib)^2}{8}\delta(t)\,\alpha(t)^2\, Q_{n-4}(t)+  \frac{(-1+\phib)^3}{16}\delta(t)^2\,\alpha(t)^2\, \, Q_{n-5}(t)\,+\\
				&\frac{(-1+\phib)^4}{32}\delta(t)^3\,\alpha(t)^2\, \, Q_{n-6}(t)+...+ \frac{(-1+\phib)^{n-4}}{2^{n-3}}\, \delta(t)^{n-5}\,\alpha(t)^2\, \, Q_{ 2}(t)
				\,+\\
				&\frac{(-1+\phib)^{n-3}}{2^{n-2}}\, \delta(t)^{n-4}\,\alpha(t)^2\, \, Q_{ 1}(t)
				+ \frac{\phib(-1+\phib)^{n-2}}{2^{n}}\alpha^2(t)\delta^{n-3}(t).
			\end{align*}
			
			\noindent Therefore,
			\begin{align*}
				Q_n(t)  - \frac{(-1+\phib)}{2}&\delta(t)Q_{n-1}(t) = \frac{1}{2}\delta(t)\,Q_{n-1}(t)\,+\\
				&\frac{(-1+\phib)}{4}\alpha(t)^2\,Q_{n-2}(t)-\frac{(-1+\phib)}{4}\delta(t)^2\,Q_{n-2}(t),	
			\end{align*}
			
			\noindent and finally,
			
			$$Q_n(t) = (-1+\phib)\frac{\alpha(t)^2-\delta(t)^2}{4}\,Q_{n-2}(t) + \frac{\phib}{2}\delta(t)\,Q_{n-1}(t). $$
			
		\end{proof}

\end{document}